\newcommand*\circled[1]{\tikz[baseline=(char.base)]{ \node[shape=circle,draw,inner sep=1.2pt] (char) {#1};}}
\newtheorem{theorem}{Theorem}[section]
\newtheorem{lemma}[theorem]{Lemma}
\newtheorem{proposition}[theorem]{Proposition}
\theoremstyle{definition}
\newtheorem{definition}[theorem]{Definition}
\theoremstyle{remark}
\newtheorem{remark}[theorem]{Remark}
\numberwithin{equation}{section}
\newcommand{\ud}{u^{\dagger}}
\newcommand{\cF}{\mathcal{F}}
\newcommand{\cN}{\mathcal{N}}
\newcommand{\GP}{\mathcal{GP}}
\newcommand{\cX}{\mathcal{X}}
\newcommand{\bE}{\mathbb{E}}
\newcommand{\bP}{\mathbb{P}}
\newcommand{\bR}{\mathbb{R}}
\newcommand{\bT}{\mathbb{T}}
\newcommand{\bN}{\mathbb{N}}
\newcommand{\bZ}{\mathbb{Z}}
\newcommand{\bC}{\mathbb{C}}
\newcommand{\rd}{\mathrm{d}}
\newcommand{\sfT}{\mathsf{T}}
\newcommand{\sfL}{\mathsf{L}}
\DeclareMathOperator*{\argmin}{argmin}
\begin{document}

\title[Empirical Bayes and Kernel Flow]{Consistency of Empirical Bayes And Kernel Flow For Hierarchical Parameter Estimation}


\author{Yifan Chen}
\address{Applied and Computational Mathematics, Caltech, 91106}
\curraddr{}
\email{yifanc@caltech.edu}
\thanks{}

\author{Houman Owhadi}
\address{Applied and Computational Mathematics, Caltech, 91106}
\curraddr{}
\email{owhadi@caltech.edu}
\thanks{}

\author{Andrew M. Stuart}
\address{Applied and Computational Mathematics, Caltech, 91106}
\curraddr{}
\email{astuart@caltech.edu}
\thanks{}

\subjclass[2010]{65F12 62C10 41A05 35Q62}

\date{}

\dedicatory{}

\begin{abstract}

{Gaussian process regression has proven very powerful in statistics, machine learning and inverse problems. A crucial aspect of the success of this
methodology, in a wide range of
applications to complex and real-world problems, is  hierarchical modeling and learning of hyperparameters. The purpose of this paper is to study two paradigms of learning hierarchical parameters: one is from the probabilistic Bayesian perspective, in particular, the empirical Bayes approach that has been largely used in Bayesian statistics; the other is from the deterministic and approximation theoretic view, and in particular the kernel flow algorithm that was proposed recently in the machine learning literature. Analysis of their consistency in the large data limit, as well as explicit identification of their implicit bias in parameter learning, are established in this paper for a Mat\'ern-like model on the torus. A particular technical challenge we overcome is the learning of the regularity parameter in the Mat\'ern-like field, for which consistency results have been very scarce in the spatial statistics literature. Moreover, we conduct extensive numerical experiments beyond the Mat\'ern-like model, comparing the two algorithms further. These experiments
demonstrate learning of other hierarchical parameters, such as amplitude and lengthscale; they
also illustrate  the setting of model misspecification  in which
the kernel flow approach could show superior performance to the more traditional empirical Bayes approach.}
\end{abstract}

\maketitle
\tableofcontents
\section{Introduction}

\subsection{Background and Context}
\label{sec: Background and Context}
Gaussian process regression (GPR) is important in its own right,
and as a prototype for more complex inverse problems in which
there is a possibly indirect, nonlinear set of observations.
An important reason for the success of GPR in applications is its ability to learn hyperparameters, entering through a hierarchical prior, from data. Learning
of these hyperparameters is typically achieved through
fully Bayesian (sampling) or empirical Bayesian (optimization)
methods. {However, new approaches suggested in the machine
learning literature, particularly the kernel flow method \cite{owhadi2019kernel}, rely on approximation theoretic criteria that can be traced back to the classical idea of cross-validation for model selection. The primary goal of this paper is to study and compare these two approaches. Special attention will be paid to their large data consistency, implicit bias, and robustness to model misspecification.}
\subsection{Gaussian Process Regression}
\label{sec: GPR}
{We start with a brief introduction to GPR; for simplicity, we focus on the noise-free scenario. The target is to recover a function $u^\dagger: D \mapsto \bR$
from pointwise data $y_i=u^{\dagger}(x_i)$ for $1\leq i \leq N$, where $x_i \in D \subset \bR^d$ and $D$ is a compact domain. 
This problem often appears in fields such as supervised learning in machine learning, non-parameteric regression in statistics, and interpolation in numerical analysis. }

The GPR solution to this problem is as follows. Given a family of positive definite covariance/kernel functions $K_{\theta}:D\times D \to \bR$ where $\theta \in \Theta$ is a hyperparameter, GPR approximates $\ud$ with the conditional expectation
\begin{equation}
u(\cdot,\theta,\cX):=\mathbb{E}\,[\xi(\cdot,\theta) \mid \xi(\cX,\theta)=\ud(\cX) ]=   K_\theta(\cdot,\cX)[K_\theta(\cX,\cX)]^{-1} \ud(\cX)\, ,
\end{equation}
 where $\xi(\cdot,\theta) \sim \GP(0,K_\theta)$ is a centered Gaussian process\footnote{Recall that the covariance function $K_{\theta}$ of a Gaussian process $\GP(0,K_{\theta})$ is {the kernel of the integral operator representation of $C_{\theta}$
  in the covariance operator notation $\cN(0,C_{\theta}).$ Connections between these perspectives are reviewed in Subsection \ref{sec: set-up of recovering regularity}. We will use the covariance operator notation more frequently later in this paper.}}(GP) with covariance function $K_\theta$. 
  {We have used the following
compressed notation:
\[\cX:=(x_1,\ldots,x_N)^\mathsf{T}\quad 
\text{and}\quad 
\ud(\cX):=(\ud(x_1),\ldots, \ud(x_N))^\mathsf{T}\, .\]
Moreover,} $K_\theta(\cX,\cX)$ denotes the $N\times N$ dimensional Gram matrix with $(i,j)^{\rm{th}}$ entry $K_\theta(x_i,x_j)$, and $K_\theta(\cdot,\cX)$ is a function mapping $D$ to $\mathbb{R}^N$  with $i^{\rm{th}}$ component $K_\theta(\cdot,x_i):D \mapsto \mathbb{R}$. 
 
Normally, every $\theta \in \Theta$  produces a solution $u(\cdot,\theta,\cX)$ that agrees with $\ud$ on $\cX$. Nevertheless, different choices
may yield distinct out-of-sample errors, known as generalization errors in the machine learning context. Therefore, it is of paramount importance to learn a good hierarchical parameter $\theta$ adaptively from data.

\subsection{Two Approaches}
\label{sec: Two Approaches and Answers}
In this paper, we study two approaches to the question posed above, both based on selecting $\theta$ as the optimizer of a variational problem.  
\subsubsection{Empirical Bayes Approach}
{The empirical Bayes (EB) approach addresses the question by proposing a statistical model.} It formulates a prior distribution on
the pair $(\xi,\theta)$ by assuming that 
$\theta$ is sampled from a prior distribution and $\xi$ is then
sampled from the 
conditional distribution of
$\xi|\theta$; then, it finds the posterior distribution of the pair $(\xi,\theta)$ conditioned on $\xi(\cX)=u^\dagger(\cX)$,
and selects the parameter $\theta$ that maximizes the marginal probability of $\theta$ under this posterior. For simplicity, we work with uninformative priors, which lead to the following objective function:
\begin{equation}\label{eqkjdekjdbnd}
\mathsf{L}^{\mathrm{EB}}(\theta,\cX,\ud)= \ud(\cX)^\sfT [K_\theta(\cX,\cX)]^{-1} \ud(\cX)+\log \det K_\theta(\cX,\cX)\, .
\end{equation}
This is also twice the negative marginal log likelihood of $\theta$ given the data $u^\dagger(\cX)$. Then, EB will choose $\theta$ by minimizing this objective function, namely
\begin{equation}
    \theta^{\mathrm{EB}}(\cX,\ud):=\argmin_{\theta \in \Theta} \sfL^{\mathrm{EB}}(\theta,\cX,\ud)\, .
\end{equation}
\subsubsection{Approximation Theoretic Approach}
{Approximation theoretic considerations, on the other hand, provide a different answer without proposing statistical models.
This methodology proceeds by asking for an ideal $\theta$ that minimizes $\mathsf{d}(\ud,u(\cdot,\theta,\cX))$ for some cost function $\mathsf{d}$.}
Though in practice $\ud$ is not available, there are ideas in cross-validation that split $\cX$ into training data and validation data, and use the approximation error in validation data to estimate the exact error. Inspired by this idea, we could turn to optimize the following objective function:
\begin{equation}
    \mathsf{d}(u(\cdot,\theta,\cX),u(\cdot,\theta,\pi\cX))\, ,
\end{equation}
where we write $\pi\cX$ for a subset of $\cX$ obtained by subsampling a proportion, say one-half, of $\cX$. 


In this paper, we focus on a particular choice of $\mathsf{d}$ that originates from the Kernel Flow (KF) approach \cite{owhadi2019kernel}.  To describe it, we denote by $(\mathcal{H}_\theta,\|\cdot\|_{K_\theta})$ the associated \textit{Reproducing Kernel Hilbert Space} (RKHS) for the kernel $K_{\theta}$; note that $\|K_{\theta}(\cdot,x)\|_{K_\theta}^2=K_{\theta}(x,x)$. The objective function in KF is chosen as
\begin{equation}
    \mathsf{L}^{\mathrm{KF}}(\theta,\cX,\pi\cX,\ud):=\frac{\|u(\cdot,\theta,\cX)-u(\cdot,\theta,\pi\cX)\|^2_{K_{\theta}}}{\|u(\cdot,\theta,\cX)\|^2_{K_{\theta}}}\, .
\end{equation}
This measures the discrepancy in the RKHS norm between the GPR solution using the whole data $\cX$ and using a subset of  the data $\pi\cX$, normalized by the RKHS norm of the former. 
{
\begin{remark}
As explained above, we understand the numerator as an estimation of the error $\|\ud-u(\cdot,\theta,\cX)\|_{K_{\theta}}^2$. Such error estimate, based on comparing
solutions obtained via different data resolutions, is a widely used idea in numerical analysis.
\end{remark}}
Based on Garlerkin orthogonality (see \cite{owhadi2019kernel}), the objective function admits a finite dimensional representation formula that is convenient for numerical computation:
\begin{equation}
\label{eqjehdjhed}
\mathsf{L}^{\mathrm{KF}}(\theta,\cX,\pi\cX,\ud)= 1-\frac{\ud(\pi\cX)^\sfT [K_\theta(\pi\cX,\pi\cX)]^{-1} \ud(\pi\cX)}{\ud(\cX)^\sfT [K_\theta(\cX,\cX)]^{-1} \ud(\cX)}\, .
\end{equation}
Then, the KF estimator is defined as
\begin{equation}
    \theta^{\mathrm{KF}}(\cX,\pi\cX,\ud):=\argmin_{\theta \in \Theta} \sfL^{\mathrm{KF}}(\theta,\cX,\pi\cX,\ud)\, .
\end{equation}
{\begin{remark}
The existence of the finite-sample formula \eqref{eqjehdjhed} is attributed to the choice of the RKHS norm in comparing solutions. It is essentially a consequence of the standard representer theorem. Additional motivations for using the RKHS norm will be reviewed in Subsection \ref{subsec: Kernel Flow and Cross-validation}.
\end{remark}}
\subsubsection{Guiding Observations and Goals}
{The EB and KF algorithms estimate the parameter $\theta$ from the observed data, the number of which can vary considerably. Thus, a basic question to ask is whether the estimators attain meaningful limits as data accumulate:
\begin{enumerate}
    \item[(1)] Consistency: how do $\theta^{\text{EB}}$ and $\theta^{\text{KF}}$ behave in the large data limit, i.e., as the number of data $N$ goes to infinity?
\end{enumerate}
Meanwhile, since we have two estimators, it is natural to compare their performance. Indeed, we observe that EB and KF have distinct objectives: EB seeks to estimate the most likely parameters of the distribution assumed to generate the data, while KF chooses parameters to minimize an estimate of the approximation error in a parameter-dependent RKHS norm, targeting at the approximation efficiency of the underlying function. Moreover, EB is always probabilistic, while KF need not be.}

{These differences motivate the implicit bias question that has been popular in the machine learning community, and the model misspecification question that is common in mathematical modeling:
\begin{enumerate}
    \item[(2)] Implicit bias: what are the selection bias of EB and KF, or, how  should the obtained estimators $\theta^{\text{EB}}$ and $\theta^{\text{KF}}$ be interpreted in practice?
    \item[(3)] Model misspecification: how do $\theta^{\text{EB}}$ and $\theta^{\text{KF}}$ behave when there is a mis-match between the
    data-generating mechanism and the model used to regress the data?
\end{enumerate}
The precise goal of this paper is to address these questions for certain concrete models, either theoretically or experimentally.
}

 
\subsection{Our Contributions} {Our contributions in this paper are twofold and
explained in the following two subsections}.

\subsubsection{Consistency and Implicit Bias}
{The first part of this work is devoted to the questions of consistency and implicit bias. We study a Mat\'ern-like model on the torus, in which $\ud$ is a sample drawn from the Mat\'ern-like Gaussian process, with three parameters $\theta=(\sigma, \tau, s)$ that quantify the amplitude, inverse lengthscale and regularity of the process.
The detailed definition is in Subsection \ref{sec: set-up of recovering regularity}. }

{
Our main analysis concerns learning the regularity parameter $s$ using EB and KF.
When the sampled points $\cX$ are equidistributed, we achieve the following contributions:
\begin{itemize}
    \item Consistency: we prove that the EB estimator converges to $s$ in the large data limit, while the KF estimator converges to $\frac{s-d/2}{2}$, so that $s$
    is also determined. Their variances are also computed and compared.
    \item Implicit bias: we characterize the selection bias of EB and KF algorithms, in terms of the $L^2$ error between $\ud$ and the GPR solution using learned parameters --- this is the so-called generalization error. It is found that EB selects the parameter that achieves the minimal $L^2$ error in expectation, while KF selects the minimal parameter that suffices for the fastest rate of convergence of the $L^2$ error to $0$ as the data density increases.
\end{itemize}
We can interpret these contributions from two perspectives. From the machine learning side, we are able to show that KF, as a machine learning method, has a well-defined large data limit for the Mat\'ern-like model. Furthermore we can characterize clearly its implicit bias in terms of $L^2$ generalization errors. Thus, this paper leads to a \textit{first} theory for the KF learning algorithm. }

{From the spatial statistics side, our analysis contributes to a novel consistency theory for estimating the regularity parameter of Mat\'ern-like fields in general dimensions. Such results are scarce in the spatial statistics literature; the techniques we use to prove consistency may be of independent interest and applicable
beyond the setting considered here.
}

{We also include numerical studies concerning the learning of the amplitude parameter $\sigma$ and the inverse lengthscale parameter $\tau$; these experiments
contribute to a more complete picture of GPR using the Mat\'ern-like field with hierarchical parameters. Moreover, we provide numerical experiments for several other well-specified models beyond the Mat\'ern-like model, thus further extending the scope of discussions.}
\subsubsection{Model Misspecification} {The second part of this work considers model misspecification: the data generating model for  $\ud$ and the model
$K_{\theta}$ used for regression do not match. We adopt the following setting:
\begin{itemize}
    \item We model the truth $\ud$ either as a GP, using a variety of covariance functions, or as a deterministic function which solves a PDE. 
    \item The kernel $K_{\theta}$ is chosen to be Green's function of various differential operators, where $\theta$ encodes information beyond the amplitude, lengthscale, and regularity of the field. For example we choose $\theta$ to
    be the location of a discontinuity within a conductivity field.
\end{itemize}
In this setting we observe distinct behavior distinguishing EB and KF.  This 
raises the discussion of how to choose which algorithm to use when solving practical problems where misspecification is to be expected. Our numerical study explores several misspecification possibilities, showing that KF could be competitive with EB in certain scenarios.
}

\subsection{Literature Review} In this subsection, we review the related literature. Several fields are of relevance, so we label them to help organize the review.
\label{sec: literature review}
 \subsubsection{Regression and Inverse Problems}
Regression is a form of inverse problem \cite{dashti2013bayesian}, and if formulated in a Bayesian fashion, it falls within the scope of Bayesian nonparametric estimation \cite{ghosh2003bayesian,hjort2010bayesian}. In the paper \cite{knapik2011bayesian} a simple
class of linear inverse problems was studied from the perspective of posterior consistency,
and it was demonstrated that the rate of posterior convergence depends sensitively on the
relationship between regularity of the true function being sought, and the regularity
of draws from the prior. This motivates the need for hierarchical procedures that
adapt, on the basis of the data, the regularity of draws from the prior. In \cite{knapik2016bayes}
the work in \cite{knapik2011bayesian} was extended to cover the data-adapted learning
of the regularity parameter in the prior; as the authors note: theoretical work
``that supports the preference for empirical or hierarchical Bayes methods does
not exist at the present time, however. It has until now been unknown whether these
approaches can indeed robustify a procedure against prior mismatch. In this paper,
we answer this question in the affirmative.'' This analysis, however, requires
simultaneous diagonalization of a self-adjoint operator formed from the forward
model and the covariance operator, for all values of the hyper-parameter.
Consistency is studied without this assumption in \cite{wendland2004scattered},
and extended to the study of emulation within Bayesian inversion
in \cite{stuart2018posterior} and to empirical Bayesian procedures in \cite{teckentrup2019convergence}. The papers \cite{knapik2016bayes} and \cite{teckentrup2019convergence} also use the EB loss
function \eqref{eqkjdekjdbnd}. In \cite{dunlop2017hierarchical} estimation of hyper-parameters
in Gaussian priors is discussed in the context of MAP estimators.

\subsubsection{Kernel Flow and Cross-validation}
\label{subsec: Kernel Flow and Cross-validation}
The KF loss function in \eqref{eqjehdjhed} was originally derived in \cite{owhadi2019kernel} {and motivated from the perspective of optimal recovery theory}. It 
can be interpreted, from a numerical homogenization perspective  \cite{owhadi2019operator}, as the relative energy contained in the fine scales (in the unresolved part) of $\ud$. 
In the paper \cite{owhadi2019kernel}, the proposed loss function to be optimized (via SGD) has the form
\begin{equation}
    \bE_{\pi_1}\bE_{\pi_2} \mathsf{L}^{\mathrm{KF}}(\theta,\pi_1\cX,\pi_2\pi_1\cX,\ud)\, ,
\end{equation}
 where $\pi_1\cX$ is a subsampling of $\cX$, and $\pi_2\pi_1\cX$ is a further subsampling of $\pi_1\cX$.  This choice reduces the dimension of the
kernel matrix and enables fast computation per iteration. Although the KF loss appears to be new,
it can be seen as a variant of cross-validation (CV), which is a commonly used model selection/parameter estimation criteria \cite{allen1974relationship, geisser1975predictive, kohavi1995study}. A theoretical understanding of the consistency of CV
``is very much of interest'' \cite{yang2007consistency} since its convergence rate can be  shown to be asymptotically minimax \cite{stone1984asymptotically} or near minimax optimal \cite{van2006cross, van2006oracle} while having a lower computational complexity \cite{zhang2010kriging} than MLE (maximum likelihood estimation). The  consistency  of  parameter  estimation  for the  Ornstein-Uhlenbeck  process  has been  studied  in  \cite{ying1991asymptotic}  for MLE,  and  \cite{bachoc2017cross}  for  CV. 

In the setting of hyperparameters estimation of GPs, comparing MLE 
with CV can be traced back to Wahba \cite{wahba1980some} and Stein \cite{stein1990comparison} who compared variants of these procedures\footnote{modified maximum likelihood estimation and generalized cross validation} for choosing the smoothing parameter of a smoothing spline; they observed that  while MLE  is optimal when the model is well-specified, CV may perform better (than MLE) under misspecification (see also \cite{bachoc2013cross} for theoretical analysis and \cite{warnes1987problems} for a practical example involving real data) and has a comparable rate of convergence when the model is correct (Stein \cite{stein1990comparison} observed that ``both estimates are asymptotically normal with the CV estimate having twice the asymptotic variance of the MLE estimate'' and suggested that
``The penalty for using CV instead of MLE when the stochastic model is correct is greater for higher-order smoothing splines, both in terms of the efficiency in estimating the smoothing parameter and the impact on subsequent predictions''). We also refer to \cite{kohn1991performance} for a detailed numerical comparison between MLE and CV for estimating spline smoothing parameters. 
As observed in \cite{scheuerer2013interpolation}, these comparisons ``are relevant for both numerical analysts
and statisticians'' since kernel interpolation can be interpreted as both approximating a deterministic unknown function from quadrature points or as estimating a sample from a Gaussian process from pointwise measurements.

\subsubsection{Machine Learning and Kernel Learning}
Kernel methods and GPs have long been used in machine learning \cite{hofmann2008kernel,rasmussen2003gaussian}. Learning a good kernel for a given task is very important in practice. Many works have tried to learn a kernel from data based on different criteria; for example, in \cite{amari1999improving}, the kernel is modified to make the model have a large margin in classification, and in \cite{cortes2013learning}, the kernel is selected to have a small local Rademacher complexity. EB and KF loss functions in this paper have also been used in \cite{rasmussen2003gaussian, wilson2016deep,owhadi2019kernel}. 

{The recent discovery of the neural tangent kernel regime for overparameterized models \cite{jacot2018neural} and the identification \cite{owhadi2020ideas} of warping kernels \cite{sampson1992nonparametric,perrin1999modelling, schmidt2003bayesian, owhadi2019kernel} as the infinite depth limit of residual neural networks \cite{he2016deep}
also suggest that a theoretical understanding of kernel selections may lead to important insights for neural network based machine learning. 
This line of work suggests that it may be fruitful to consider machine learning directly as the problem of selecting an underlying kernel (by minimizing nonlinear functionals of the empirical distribution such as \eqref{eqkjdekjdbnd} or \eqref{eqjehdjhed}) and learning based on this kernel; in this perspective one has
hierarchical GPR with kernel itself as the hyperparameter. This may be more effective than simply fitting the data by minimizing a generalized moment, i.e., a linear functional, of the empirical distribution, which is popularly used in empirical risk minimization. Numerical experiments presented in \cite{yoo2020deep} and \cite{hamzi2020learning}, based on the KF methodology in \cite{owhadi2019kernel}, provide evidence that (1) this point of view could improve test errors, generalization gaps, and robustness to distribution shifts in the training of of ANNs, and (2)   kernel methods can be a simple and effective approach for learning dynamical systems and surrogate models, with the underlying kernel also learned from data (using KF and its variants).
This further motivates the desire to understand the KF-based estimation of $\theta.$}

\subsection{Organization} 
{The rest of this paper is organized as follows. 
Section \ref{sec: recover regularity} is devoted to learning the regularity parameter of the Mat\'ern-like model, where the large data consistency is proved and implicit bias is characterized. Most of the detailed proofs are deferred to Section \ref{sec: proof}, and concise intuitive ideas are presented in Section \ref{sec: recover regularity} for the sake of readability. Section \ref{sec: numerical experiments} considers other well-specified models, including the learning of the lengthscale and amplitude parameters in the Mat\'ern-like model, or beyond the Mat\'ern-like model. Experiments are provided concerning consistency and variance of these EB and KF estimators. Section \ref{sec: experiments, robustness} covers discussions on model misspecification through numerical studies. The purpose of the numerical experiments is twofold: (i) to demonstrate the extent to
which the ideas learned through the analysis of consistency, which focuses primarily on the regularity parameter, extends to other parameters; (ii) to compare the performance of the EB and KF estimators quantitatively, since use of the latter is somewhat new in this area and its potential pros and cons need to be evaluated. Finally, we conclude this paper in Section \ref{sec: discussions}.}

\section{Regularity Parameter Learning for the Mat\'ern-like Model}
\label{sec: recover regularity}
{In this section, we study a Mat\'ern-like model on the torus. We start with definitions of this model in Subsection \ref{sec: set-up of recovering regularity}, followed with definitions of EB and KF estimators in this context in Subsection \ref{Regularity Parameter Learning}. Then, in Subsection \ref{sec: Main Theorem, Implications, and Proof Technique}, we present our theory for the consistency of EB and KF estimators in learning the regularity parameter, with experiments included to demonstrate the correctness and implications of the theory. In particular, the implicit bias of these two estimators is explained. We outline the sketch of proofs for the theoretical result in Subsections  \ref{sec: Fourier}, \ref{sec: consistency EM estimator} and \ref{sec: consistency KF estimator}, and summarize several observations
in Subsection \ref{sec:regularity recovery, discussion}. 
Subsection \ref{sec: experiments, variance of estimators} provides additional experiments discussing the variance of these estimators. }


\subsection{The Mat\'ern-like Model}
\label{sec: set-up of recovering regularity}
{We follow the general set-up in Subsections \ref{sec: GPR} and \ref{sec: Two Approaches and Answers}, where we have mentioned all the abstract ingredients such as the physical domain $D$, the truth $\ud$, the kernel $K_{\theta}$, and the data location $\cX$. In the current and next subsections, we will specify the exact meaning of these terms for a Mat\'ern-like model on the torus. We will also make remarks to explain its connection to the standard Whittle-Mat\'ern process in the whole domain; see Remark \ref{connect to whittle matern}.}
\subsubsection{The Physical Domain} {We set $D$ to be $\bT^d=[0,1]^d_{\mathrm{per}}$, the $d$ dimensional unit torus; this will be the domain that we use for all our analysis.
We need to introduce some mathematical concepts related to functions
defined on this torus $\bT^d.$ First, the space of square integrable functions on $\bT^d$ with mean $0$ is denoted by
\begin{equation}
    \dot{L}^2(\bT^d):=\Bigl\{v: \bT^d \to \bR:\   \int_{\bT^d} |v(x)|^2\, \rd x < \infty,\  \int_{\bT^d} v(x)\, \rd x = 0 \Bigr\}\, . 
\end{equation}
The $L^2$ inner product and norm are denoted by $[\cdot,\cdot]$ and $\|\cdot\|_0$ respectively. }

{In order both to define covariance operators and Sobolev spaces
it is convenient to introduce the Laplacian operator. Let $-\Delta$ be the negative Laplacian equipped with periodic boundary conditions on $\bT^d$ and restricted to functions with zero mean. This operator has orthonormal eigenfunctions $\phi_m(x)=e^{2\pi i\left<m,x\right>}$ with corresponding eigenvalues $\lambda_m=4\pi^2|m|^2$, for every $m \in \bZ^d \backslash \{0\}$, where $\bZ^d$ denotes the $d$-fold tensor product of $\bZ$, the set of non-negative integers.  Here, $i$ is the imaginary number, and $\left<m,x\right>$ denotes the Euclidean inner product between $m, x \in \bR^d$.}

{Now, we can write functions in $\dot{L}^2(\bT^d)$ as Fourier series:
\begin{equation}
    v(x)=\sum_{m \in \bZ^d} \hat{v}(m)e^{2\pi i\left<m,x\right>}\, ,
\end{equation}
 where $\hat{v}:\bZ^d \to \bR$ is the Fourier coefficient that satisfies $\hat{v}(0)=0$ and $\hat{v}(m)=[v,\phi_m]$ for $m \in \bZ^d \backslash \{0\}$. This representation can be used to define useful Sobolev-like spaces. For every $t > 0$, the Sobolev-like space $\dot{H}^t(\bT^d) \subset \dot{L}^2(\bT^d)$ consists of functions with bounded $\|\cdot\|_t$ norm:
 \begin{equation}
     \|v\|_t^2:=\sum_{m\in \bZ^d} (4\pi^2|m|^2)^t|\hat{v}(m)|^2 < \infty \, .
 \end{equation}
 We note that $\dot{H}^0(\bT^d)=\dot{L}^2(\bT^d)$. For $t < 0$, the space $\dot{H}^t(\bT^d)$ is defined through duality. The Hilbert scale of function spaces defined through varying $t$ serves as the basic ingredient to model the regularity of a function on $\bT^d$.} 

\subsubsection{The Mat\'ern-like Kernel and Process}
\label{sssec:need}
{The Mat\'ern-like covariance operator on the torus is defined by \begin{equation}
\label{eqn: matern like kernel operator}
    C_{\theta}=\sigma^2(-\Delta+\tau^2 I)^{-s} \, ,
\end{equation}
where the parameter $\theta=(\sigma, \tau, s)$. The roles of the three parameters are reviewed in Remark \ref{connect to whittle matern}. The orthonormal eigenfunctions of this operator are  $\phi_m(x)=e^{2\pi i\left<m,x\right>}$ with corresponding eigenvalues $\sigma^2(4\pi^2|m|^2+\tau^2)^{-s}$, for $m \in \bZ^d \backslash \{0\}$.}

{The Mat\'ern-like kernel function $K_{\theta}$ is related to the operator $C_{\theta}$ via 
\begin{equation}
    K_{\theta}(x,y)=[\updelta(\cdot-x),C_{\theta}\updelta(\cdot-y)]
\end{equation}
 where $\updelta(\cdot-x)$ is the Dirac function centered at $x$. Equivalently, $K_{\theta}$ can be understood as the Green function of the differential operator $C_{\theta}^{-1}$. Note that by Sobolev's emdedding theorem, $s>d/2$ is required to make $K_{\theta}(x,y)$ pointwise well-defined (See Section 7.1.3 and Lemma 7.2 in \cite{dashti2013bayesian}): $K_{\theta}(\cdot,y)$ then lies in the space of continuous functions
 for any $y \in \bT^d.$
 \begin{remark}
 \label{rmk:mercer}
  We also have the Mercer decomposition of the kernel function:
\begin{equation}
\label{eqn:mercer decomposition}
    K_{\theta}(x,y)=\sum_{m \in \bZ^d \backslash  \{0\}}\sigma^2(4\pi^2|m|^2+\tau^2)^{-s} \phi_m(x)\phi^*_m(y)\, ,
\end{equation}
where $\phi^*_m$ is the complex conjugate of $\phi_m$. 
 \end{remark}}

{Given these function spaces and operators, we can define the Mat\'ern-like process using the Gaussian measure notation:
\begin{equation}
\label{eqn: Gaussian prior notation}
    \xi \sim \cN\Bigl(0,\sigma^2(-\Delta+\tau^2 I)^{-s}\Bigr)\, .
\end{equation} 
This covariance operator viewpoint could be understood as follows:  for any $f \in \dot{L}^2(\bT^d)$, the quantity $[f,\xi]$ is a Gaussian random variable with mean $0$ and variance $[f, \sigma^2(-\Delta+\tau^2 I)^{-s}f]$. We note that \eqref{eqn: Gaussian prior notation} is equivalent to the GP notation $\xi \sim \GP(0,K_{\theta})$.
For more details on how to define Gaussian measures using operators we refer to  \cite{bogachev1998gaussian, owhadi2019operator}.
A sample from this process can be realized by the Karhunen–Lo\`eve expansion
\begin{equation}
\label{eqn:sample drawn GP}
    \xi(x) = \sum_{m \in \bZ^d \backslash  \{0\}}\sigma(4\pi^2|m|^2+\tau^2)^{-s/2} \phi_m(x)\xi_m\, ,
\end{equation}
where $\xi_m$ ($m \in \bZ^d \backslash  \{0\}$) are i.i.d. standard normal random variables;
we have $\mathbb{E}\,\xi(x)\xi(y)=K_{\theta}(x,y).$ Numerically, we can draw a sample by truncating this series and restricting to a grid of values on the torus. Alternatively it is
possible to discretize the differential operator $C_{\theta}^{-1}$ on a grid first, and then compute the discrete eigenfunctions to draw a sample. Such an idea is useful when the 
eigenvalues and eigenfunctions of $C_{\theta}^{-1}$ are not analytically known a priori. Indeed, when the operator is discretized into a matrix, the infinite dimensional Gaussian measure becomes a finite dimensional one with the covariance matrix being the discretization of $C_{\theta}$.  Drawing samples is then straightforward. In this section, however, we work on the torus and so the eigenvalues and eigenfunctions are known explicitly and the truncated Karhunen–Lo\`eve expansion could be employed.
\begin{remark}
\label{connect to whittle matern}
 The three parameters $\sigma,\tau$ and $s$ quantify the amplitude, inverse lengthscale, and regularity of the process, respectively. This setting is similar to that of the standard Mat\'ern process \cite{stein99book,guttorp2006studies}, defined on the whole space $\mathbb{R}^d$, whose kernel function and associated covariance operator are both characterized by three parameters; see \cite{linkGFGMRF} for links to the solution
 of stochastic PDEs, an approach attributable to
 Whittle \cite{whittle1954stationary,guttorp2006studies}. 
 The Mat\'ern kernel function is
 \[K_{\sigma,l,\nu}(x,y)=\sigma^2\frac{2^{1-\nu}}{\Gamma(\nu)}\left(\frac{|x-y|}{l}\right)^\nu B_\nu\left(\frac{|x-y|}{l}\right)\, , \]
 for $x,y \in \bR^d$, where $B_\nu$ is the modified Bessel function of the second kind of order $\nu$. On $\bR^d$, this kernel function corresponds to the covariance operator
 \[C_{\sigma,l,\nu} = \frac{\sigma^2l^d\Gamma(\nu+d/2)(4\pi)^{d/2}}{\Gamma(\nu)}(I-l^2\Delta)^{-\nu-d/2}\, .\]
 From this formula, the connection between the Mat\'ern covariance operator in $\bR^d$ and the Mat\'ern-like kernel operator \eqref{eqn: matern like kernel operator} on $\bT^d$ becomes apparent.
 We restrict our analysis to the torus to exploit powerful Fourier series techniques. We will also comment on other boundary conditions in Subsection \ref{sec:regularity recovery, discussion}.
For related results regarding the Mat\'ern process in $\bR^d$ or other bounded domains, we recommend the book \cite{stein99book}. We note that  \cite[Sec.~6.7]{stein99book} also considers a periodic version of the Mat\'ern model and discusses (via the
Fisher information matrix) the fixed domain asymptotics of the maximum likelihood estimate of the three parameters.
 By using the Mercer decomposition \eqref{eqn:mercer decomposition}, the periodic case there is mathematically equivalent to the Mat\'ern-like model on the torus that is considered in this paper. In the next subsection, we prove the consistency of estimators for the regularity parameter, providing a rigorous theory for this periodic model. It
 would be interesting, in future work, to combine this
 consistency with the properties of the Fisher information matrix established in \cite[Sec.~6.7]{stein99book} to obtain
 Bernstein-von-Mises type theorems characterizing asymptotic
 normality of the estimator.
\end{remark}}
\subsection{Regularity Parameter Learning}
\label{Regularity Parameter Learning}
{With the Mat\'ern-like kernel and process defined, we move to discuss the parameter learning problem in this subsection. We fix  $\sigma=1$ and  $\tau=0$ in the Mat\'ern-like model and focus on the regularity parameter only. To proceed, we need to make precise the ground truth $\ud$, the kernel, and the data location $\cX$, of the learning problem.
 \subsubsection{The Ground Truth}
Our theoretical results regarding the consistency of EB and KF estimators will be based on the assumption that $\ud$ is drawn from the GP $\cN(0,(-\Delta)^{-s})$ for some $s>d/2$.}
{
\begin{remark}
\label{rmk: regularity GP}
We note some regularity properties of this GP here. The Cameron-Martin space for $\xi \sim \cN(0,(-\Delta)^{-s})$ is $\dot{H}^s(\bT^d)$ (for readers not familiar with the Cameron-Martin space, see Theorem 7.33 in \cite{dashti2013bayesian}). However, $\xi$ is not an element of this space, almost surely. Indeed, it holds that $\xi$ belongs to $\dot{H}^{s-d/2-\eta}(\bT^d)$ 
for any $\eta > 0$ almost surely (and to H\"older spaces with the same number of fractional derivatives; see Theorem 2.12 in \cite{dashti2013bayesian}). Furthermore, since the Laplacian operator is homogeneous and thus the covariance operator is stationary in space, the regularity of the path is spatially homogeneous (the measure is space translation-invariant).
Here,  we refer, for this phenomenon, to $\xi$ (as a function) having 
\emph{homogeneous critical regularity} $s-d/2$ across $\bT^d$. If we drop the term ``homogeneous'', we mean the property holds without the requirement of spatial homogeneity. Such behavior may occur for functions with spatial singularities.
\end{remark}
\begin{remark}
\label{rmk: s>d/2}
We always require $s >d/2$, which ensures the continuity of the sample path of $\xi$ almost surely and guarantees that $\dot{H}^s(\bT)$ is a RKHS, according to discussions in Remark \ref{rmk: regularity GP}. Thus, the pointwise value of $\xi$ makes sense.
\end{remark}
\subsubsection{The Equidistributed Data} We observe
equidistributed pointwise values of $\ud$ over the torus, i.e., the data lie on a lattice. To describe the data locations we introduce a level parameter $q \in \bN$ such that, for a given $q$, we have the data locations $\cX_q:=\{x_j: j \in J_q$\}, where $x_j=(j_1,j_2,...,j_d)\cdot 2^{-q}$ and $J_q:=\{(j_1,j_2,...,j_d) \in \bN^d: 0\leq j_k\leq 2^q-1,\forall \ 1 \leq k \leq d \}$. We also use the simplified notation $x_j=j2^{-q}$ throughout the paper. }
{
\subsubsection{The EB and KF Estimators} 
We follow the definitions in Subsection \ref{sec: Two Approaches and Answers}. Here, the kernel function for the regularity learning problem will be \[K_{\theta}(x,y)=[\updelta(\cdot-x),(-\Delta)^{-t}\updelta(\cdot-y)]\, ,\]
where the parameter $\theta=\{t\}$. Similar to Remark \ref{rmk:mercer}, it has the following Mercer decomposition
\begin{equation}
    K_{\theta}(x,y)=\sum_{m \in \bZ^d \backslash  \{0\}}(4\pi^2|m|^2)^{-s} \phi_m(x)\phi^*_m(y)\, .
\end{equation}
Numerically, we can compute it by truncating this infinite series. Fast Fourier Transform could be applied to speed up computation of the kernel matrix.}

{
We adapt several notations from Subsection \ref{sec: Two Approaches and Answers} to this specific problem, by writing $t$ instead of $\theta$, and $q$ instead of $\cX_q$, and $K(t,q)$ instead of $K_{\theta}(\cX_q,\cX_q)$. These simplified notations make the analysis cleaner to present. Under such convention, the EB estimator for the regularity parameter is:
\begin{align}
\label{eqn: def of estimators, recover regularity, EB}
s^{\text{EB}}(q,\ud)=\argmin_{t \in [d/2+\delta,1/\delta]} \mathsf{L}^{\text{EB}}(t,q,\ud), \  \mathsf{L}^{\text{EB}}(t,q,\ud):=\|u(\cdot,t,q)\|_t^2 + \log \det K(t,q)\, .
\end{align}
    Here, $u(\cdot,t,q)$ is the GPR solution using the kernel function $K_t$ and the observational data of $\ud$ at $\cX_q$. 
     \begin{remark}
The formula \eqref{eqn: def of estimators, recover regularity, EB} is the continuous formulation of the EB loss function, which is more convenient for theoretical analysis of consistency. The finite-sample formula \eqref{eqkjdekjdbnd} is more useful in numerical computation, and it can be derived from \eqref{eqn: def of estimators, recover regularity, EB} by using the representer theorem. 
   \end{remark} 
     \begin{remark} As in Remark \ref{rmk: s>d/2}, we require the regularity parameter $t>d/2$. Here, furthermore, we introduce a number $\delta>0$ and select the domain of the parameter to be $t \in [d/2+\delta,1/\delta]$;  $\delta$ can be any arbitrary positive number, and this compactification of the parameter domain will simplify the subsequent analysis. The reader should not confuse real number $\delta$ with Dirac delta function $\updelta$.
\end{remark}
    For the KF loss function, we fix the subsampling operator to be equidistributed subsampling so that $\pi \cX_q= \cX_{q-1}$; for this
    choice, we can omit the dependence of the estimator on the subsampling operator $\pi$ in the notation and write:
    \begin{equation}
        \label{eqn: def of estimators, recover regularity, KF}
    s^{\text{KF}}(q,\ud)=\argmin_{t \in [d/2+\delta,1/\delta]} \mathsf{L}^{\text{KF}}(t,q,\ud), \
    \mathsf{L}^{\text{KF}}(t,q,\ud):= \frac{\|u(\cdot,t,q)-u(\cdot,t,q-1)\|_t^2}{\|u(\cdot,t,q)\|_t^2}\, .
    \end{equation}}
    \subsection{Consistency and Implicit Bias} In this subsection, we present our theory of consistency and characterize the implicit bias via numerical experiments. The sketch of proofs is given in the next subsections.
    \label{sec: Main Theorem, Implications, and Proof Technique}
    \subsubsection{Main Theorem} We have the following theorem regarding the consistency of the two statistical estimators in the large data limit:
    \begin{theorem}
        \label{thm: consistency thm}
        Fix $\delta >0$. Suppose $\ud$ is a sample drawn from the Gaussian process $\cN(0,(-\Delta)^{-s})$. If
        $s \in [d/2+\delta,1/\delta]$ then, for the Empirical Bayesian estimator,
        \[\lim_{q \to \infty} s^{\mathrm{EB}}(q,\ud)=s\, ; \]
        if $\frac{s-d/2}{2} \in [d/2+\delta,1/\delta]$ then for the Kernel Flow estimator, 
        \[\lim_{q \to \infty} s^{\mathrm{KF}}(q,\ud)=\frac{s-d/2}{2}\, . \]
        In both cases the convergence is in probability with respect to randomly chosen $\ud.$
    \end{theorem}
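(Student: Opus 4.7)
The plan is to exploit the two symmetries of the problem, namely translation invariance on $\bT^d$ and the lattice structure of $\cX_q$, so that every quantity of interest is converted into an explicit Fourier series expression, and then to analyze those expressions asymptotically as $q\to\infty$.

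First I would diagonalize the Gram matrix. Because the eigenbasis $\{\phi_m\}_{m\in\bZ^d}$ jointly diagonalizes every covariance operator $(-\Delta)^{-t}$, and because $\cX_q$ is invariant under the shift subgroup $2^{-q}\bZ^d/\bZ^d$, the Gram matrix $K_t(\cX_q,\cX_q)$ is block-circulant and is diagonalized by the discrete Fourier transform on the grid. Its eigenvalues are the aliased sums
\[
\mu_k^{(t,q)}=N^d\sum_{\substack{m\in\bZ^d\\ m\equiv k\,(\mathrm{mod}\,N)}}(4\pi^2|m|^2)^{-t},\qquad k\in J_q\setminus\{0\},\ N=2^q.
\]
Analogously, if $\ud\sim\cN(0,(-\Delta)^{-s})$ then its continuous Fourier coefficients satisfy $\hat{\ud}(m)\sim\cN(0,(4\pi^2|m|^2)^{-s})$ independently across $m$, and the DFT of $\ud(\cX_q)$ is the corresponding aliased Gaussian sum. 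Substituting these into \eqref{eqkjdekjdbnd} and \eqref{eqjehdjhed} converts both losses into explicit sums over $k\in J_q\setminus\{0\}$ in which the random input is a linear combination of the Gaussian modes $\xi_m$.

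Second, I would establish concentration and identify deterministic limits. Both losses are quadratic forms (or ratios of quadratic forms) in the i.i.d.\ standard Gaussians $\{\xi_m\}$, so Hanson--Wright or a direct fourth-moment estimate yields concentration around their expectations at a rate strong enough to survive taking argmins. Computing the expectations explicitly from the Fourier representation and applying sharp two-sided estimates on the aliased sums (with the $m=k$ term dominating for $|k|\ll N$ and the remaining terms explicitly controlled near the Nyquist scale) shows that, after suitable normalization in $q$, the expected losses converge uniformly in $t$ on the compact set $[d/2+\delta,1/\delta]$ to deterministic functions $\bar{\sfL}^{\mathrm{EB}}(t)$ and $\bar{\sfL}^{\mathrm{KF}}(t)$.

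Third, I would argue uniqueness of the minimizer of each limit. For EB the classical maximum-likelihood reasoning applies: the Fourier formulas turn $\bar{\sfL}^{\mathrm{EB}}(t)$ into a sum of terms of the form $f(\mu_k^{(s,q)}/\mu_k^{(t,q)})$ with $f(x)=x-\log x$, and the mode-by-mode inequality $f(x)\ge 1$ with equality only at $x=1$ forces $t=s$. For KF, writing the loss as $1-Q_{q-1}(t)/Q_q(t)$ with $Q_q(t)=\ud(\cX_q)^\sfT K_t(\cX_q,\cX_q)^{-1}\ud(\cX_q)$, the limit is driven by a competition between low-frequency modes (where $Q_q$ and $Q_{q-1}$ agree to leading order) and near-Nyquist modes whose $t$-dependent weighting differs on the two grids; a sharp computation of the Nyquist contribution is what must pin the limiting minimizer at $(s-d/2)/2$. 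Uniqueness plus uniform convergence on the compact parameter set then transfers loss convergence to argmin convergence in probability, completing the proof.

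The main obstacle is the KF case. Unlike EB, which is pinned by a classical likelihood principle, the location $(s-d/2)/2$ arises from a delicate interplay between the true decay of $|\hat{\ud}(m)|^2$ and the weighting by $t$ in both the RKHS norm and the aliased kernel eigenvalues. To identify it I need uniform two-sided control on $\sum_{m\equiv k\,(\mathrm{mod}\,N)}|m|^{-2\alpha}$ sharp enough to separate a resolved low-frequency block from a thin shell near the Nyquist scale, and then to show that the $t$-dependent exponent governing the ratio $Q_{q-1}(t)/Q_q(t)$ is strictly extremized at $(s-d/2)/2$. Secondary technicalities include the zero-mean constraint imposed by the nullspace of $-\Delta$, handling of the compactification parameter $\delta$, and ensuring that Gaussian concentration is uniform in $t$ on $[d/2+\delta,1/\delta]$.
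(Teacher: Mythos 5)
Your overall architecture is the same as the paper's: diagonalize everything by Fourier series on the lattice (your aliased eigenvalues $\mu_k^{(t,q)}$ are exactly the paper's $2^{qd}(4\pi^2)^{-t}M_q^t(m)$, and the DFT of $\ud(\cX_q)$ is the paper's periodization $T_q\hat u$), reduce both losses to weighted sums of independent $\chi^2$ variables, prove concentration by second-moment bounds uniformly in $t$ on the compact set, and pass from loss convergence to argmin convergence. Your EB identification via the mode-by-mode inequality $x-\log x\ge 1$ is a clean classical touch the paper does not use explicitly; it correctly locates the minimizer of the \emph{expected} loss at $t=s$ for each $q$, though you still need the quantitative gap (of order $(s-t)\,q2^{qd}$ for $t<s$ and $2^{q(2t-2s+d)}$ for $t>s$) to beat the fluctuations, which is what the paper's normalized limit $d-2t$ for $t\le s$ versus $+\infty$ for $t>s$ delivers. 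One structural caveat for both halves: the normalized limit functions are \emph{not} continuous functions with unique interior minima --- the EB limit is finite and linearly decreasing up to $s$ and $+\infty$ beyond, and the KF limit is similarly degenerate --- so the generic ``uniform convergence plus unique minimizer implies argmin convergence'' template does not apply as stated; one must argue separately on $[d/2+\delta,\,t^*-\epsilon]$ and $[t^*+\epsilon,\,1/\delta]$ as the paper does.

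The genuine gap is the KF half, and you essentially flag it yourself. What actually pins the limit at $(s-d/2)/2$ is a two-sided bound on the interaction term of the form
\[
\|u(\cdot,t,q)-u(\cdot,t,q-1)\|_t^2 \;\asymp\; \sum_{m} 2^{-2tq}\,|m|^{4t-2s}\,\xi_m^2 \;+\;\text{(lower-order terms)}\,,
\]
whose decay rate in $q$ is $2^{-2tq}$ when $4t-2s<-d$ (the series converges) and $2^{q(2t-2s+d)}$ when $4t-2s>-d$ (the series grows like $2^{q(4t-2s+d)}$); the two exponents $2t$ and $2s-d-2t$ cross precisely at $t=(s-d/2)/2$, and the denominator only intervenes for $t>s-d/2$. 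Your proposal describes the need for ``sharp control near the Nyquist scale'' but does not derive this bound, and in particular does not address the fact that $T_{q-1}\hat u(m)=\sum_{l\in\{0,1\}^d}T_q\hat u(m+2^{q-1}l)$ is \emph{correlated} with $T_q\hat u(m)$: to obtain a usable independent family one must decompose the frequency box into the $2^d$ cosets $B_{q-1}^d+2^{q-1}k$ and compute the variance of $T_q\hat u(m)/M_q^t(m)-T_{q-1}\hat u(m)/M_{q-1}^t(m)$ coset by coset, which is the content of the paper's Propositions \ref{prop: Lower bound on the interaction term} and \ref{prop: Upper bound on the interaction term}. Without that computation the location $(s-d/2)/2$ remains a conjecture inside your argument rather than a consequence of it.
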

    
    {\begin{remark}
    Strictly speaking this theorem shows that EB consistently
    estimates the regularity parameter, whilst
    KF does not. However we make two observations about this. Firstly, the true value of $s$ can be recovered from the KF estimator by a simple linear transformation. And, secondly, the value selected
    by KF is optimal with respect to minimizing
    a specific measure of generalization error (as we will show in the discussion of implicit bias in Subsection \ref{sec: implicit bias}),
    and is of clear interest from this perspective.
    \end{remark}
    \begin{remark}
    The use of $\delta$ in the proof (and hence statement) of this theorem helps by compactifying the parameter space. In practice, numerics demonstrate that it is not intrinsic to the problem. We leave for future work the problem of a more refined theorem, and proof, which does not rely on it.
    \end{remark}
    }
    \begin{remark}
    For economy of notation we will drop explicit reference to the dependence of the loss functions
    and the estimators on $\ud$ in what follows; we will simply write 
    $\mathsf{L}^{\mathrm{EB}}(t,q)$, $\mathsf{L}^{\mathrm{KF}}(t,q)$,  $s^{\mathrm{EB}}(q)$,
    $s^{\mathrm{KF}}(q)$.
    \end{remark}
    The remainder of this subsection is devoted to
    numerical experiments illustrating the theory, discussion of the
    implications of the theory (i.e. implicit bias), and an overview of the proof techniques
    we adopt.
    
    \subsubsection{Numerical Illustration of Theory} 
    \label{section: numerical example for s, s-d/2 /2 demonstration} We present a numerical example to demonstrate the main theorem,
    and its consequences for regression. Consider the one dimensional case, i.e., $d=1$. We set the ground truth $s=2.5$ and so $\frac{s-d/2}{2}=1$. The domain is discretized with $N=2^{10}$ equidistributed grid points. For our first set of experiments we fix the resolution level of the data points to be $q=9$, i.e., we have $2^9$ equidistributed observations of the unknown function $u^\dagger$. In what follows the Laplacian is as defined
    in Subsection \ref{sssec:need}. Given a sample of $u^\dagger$ from  $\cN(0,(-\Delta)^{-s})$, we form the loss function for the EB and the KF estimators. {We draw this sample using the formula \eqref{eqn:sample drawn GP} with $\sigma=1$ and $\tau=0$; we truncate the series to the grid resolution.} A single realization of these loss functions is then shown in Figure \ref{fig: 1 d example}.
    \begin{figure}[ht]
    \centering
    \includegraphics[width=6cm]{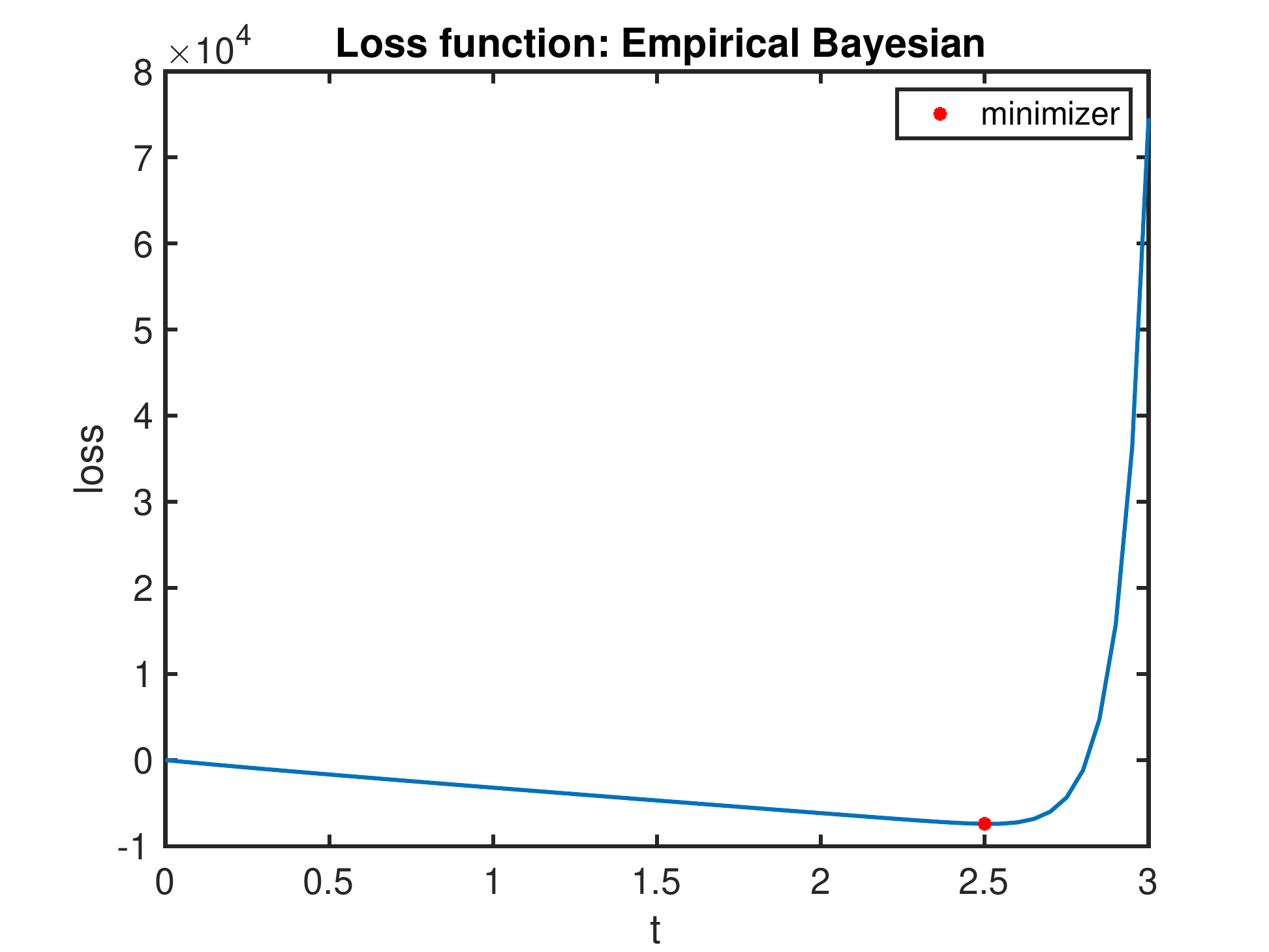}
    \includegraphics[width=6cm]{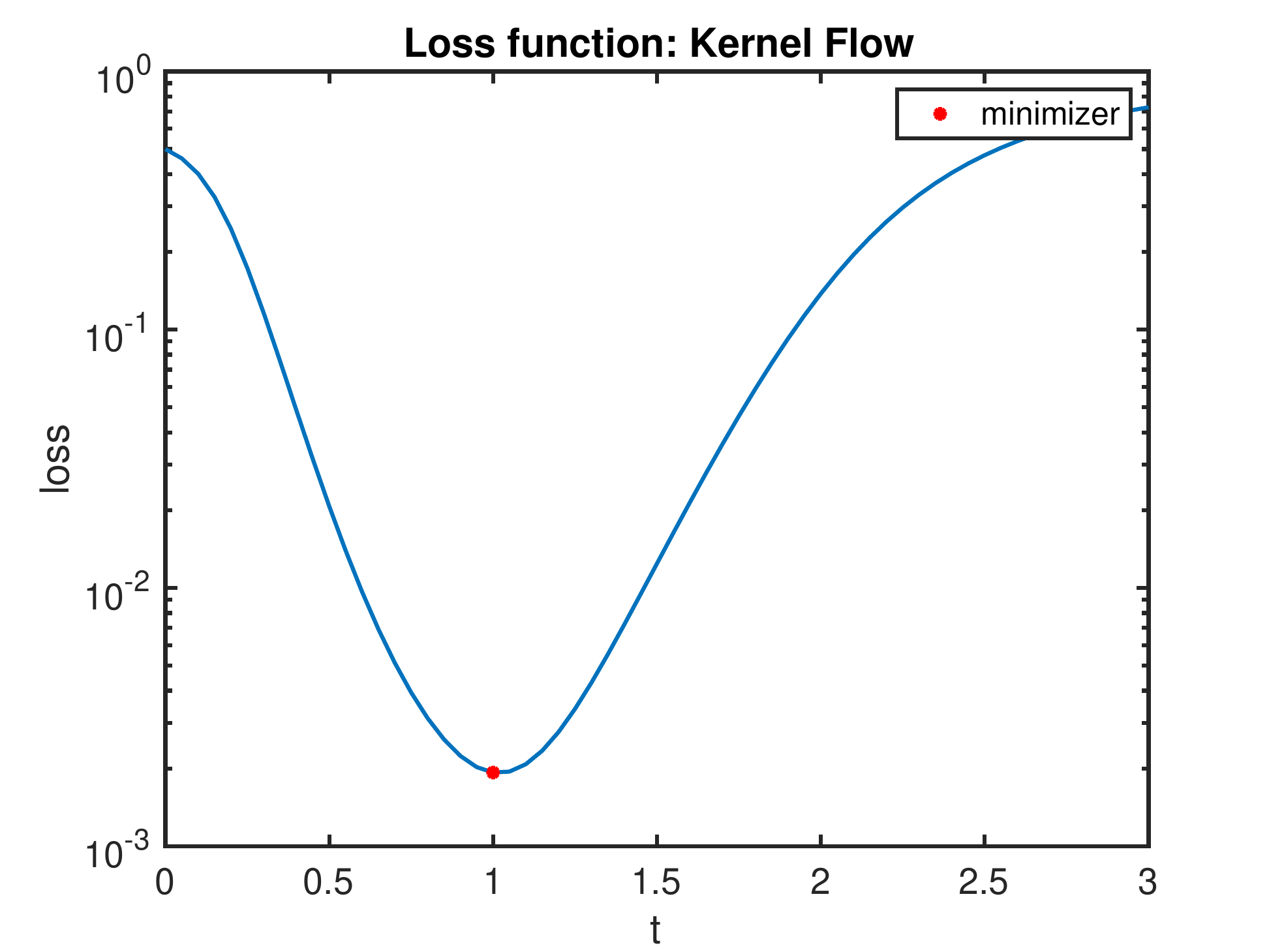}
    \caption{Left: EB loss; right: KF loss}
    \label{fig: 1 d example}
    \end{figure} 
    
     We observe that the minimizer of the EB loss function is very close to $t=2.5$, while the minimizer of the KF loss function is very close to $t=1$, matching the predictions of Theorem \ref{thm: consistency thm}. Furthermore, the loss functions exhibit some interesting features. Specifically, the EB loss function behaves as a linear function of $t$, for $t$ less than $s$, and then blows up rapidly when $t$ exceeds $s$. The KF loss function is more symmetric with respect to the minimizer $t=\frac{s-d/2}{2}$ in the logarithmic scale. We will make remarks that explain these observations in our theoretical analysis.

    \subsubsection{{Implicit Bias}}
    \label{sec: implicit bias}
     We present here a second set of numerical experiments looking at the effect of the parameter value $s$ selected by EB and KF on the approximation of the function $\ud$, which is (typically) the
     primary goal of hierarchical parameter estimation. The experimental set-up is the same, but now we vary the resolution of the data points $q=3,4,...,9$. We focus on the $L^2$ error between $\ud$ and the GPR solution using learned parameters, i.e.,
    \[\|\ud(\cdot)-u(\cdot,t,q)\|_0^2\, .\]
    
    \begin{figure}[ht]
    \centering
    \includegraphics[width=9cm]{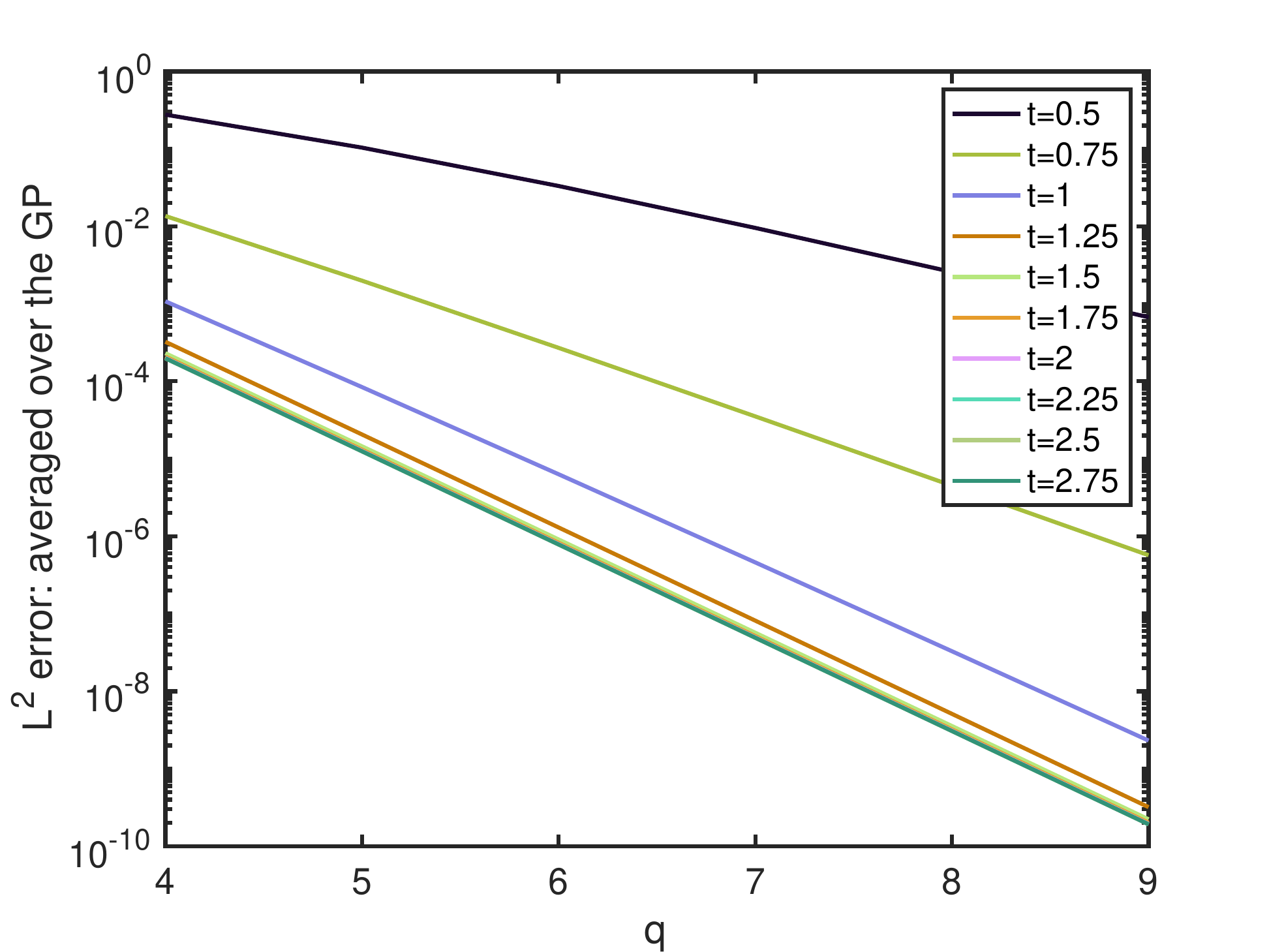}
    \caption{$L^2$ error: averaged over the GP}
    \label{fig: L^2 error}
    \end{figure} 

    We start, in Figure \ref{fig: L^2 error}, by considering the error as a function of $q$, for different $t$.
    As we increase $t$, the regularity of the GP used for regression increases. In order to illustrate clear trends, the $L^2$ error is averaged over the random draw of $\ud \sim \cN(0,(-\Delta)^{-s})$, so the effective error is $\bE_{\ud}\|\ud(\cdot)-u(\cdot,t,q)\|_0^2$. From the figure, we can see that when $t$ increases from $0.5$ to $1$, the convergence rate of the $L^2$ approximation error increases. Then, if we increase $t$
    further from $1$ to $3$, the slope of the convergence curve remains nearly the same. This
    demonstrates the fact that $1=\frac{s-d/2}{2}$ is the minimal $t$ that suffices to achieve the fastest rate of $L^2$ error convergence. We have observed that this phenomenon is very stable with respect to the
    specific random draw: the general shape of the curves seen in
    Figure \ref{fig: L^2 error} is still observed when one specific draw of the true random process is used, although the resulting figure contains fluctuations and is not as clear as the average case that we show.  
    
    
    On the other hand, we can compute   $\bE_{\ud} \|\ud(\cdot)-u(\cdot,t,q)\|_0^2$ for $q=9$ as a function of $t$; see Figure \ref{fig: L^2 error: ave GP}.
    The optimality of the value $s=2.5$ is clear. However, unlike the experiments in Figure~\ref{fig: L^2 error}, this result is not stable 
    with respect to the random instance of the GP: the minimizer of the $L^2$ error fluctuates wildly in our experiments.
    
    \begin{figure}[ht]
    \centering
    \includegraphics[width=9cm]{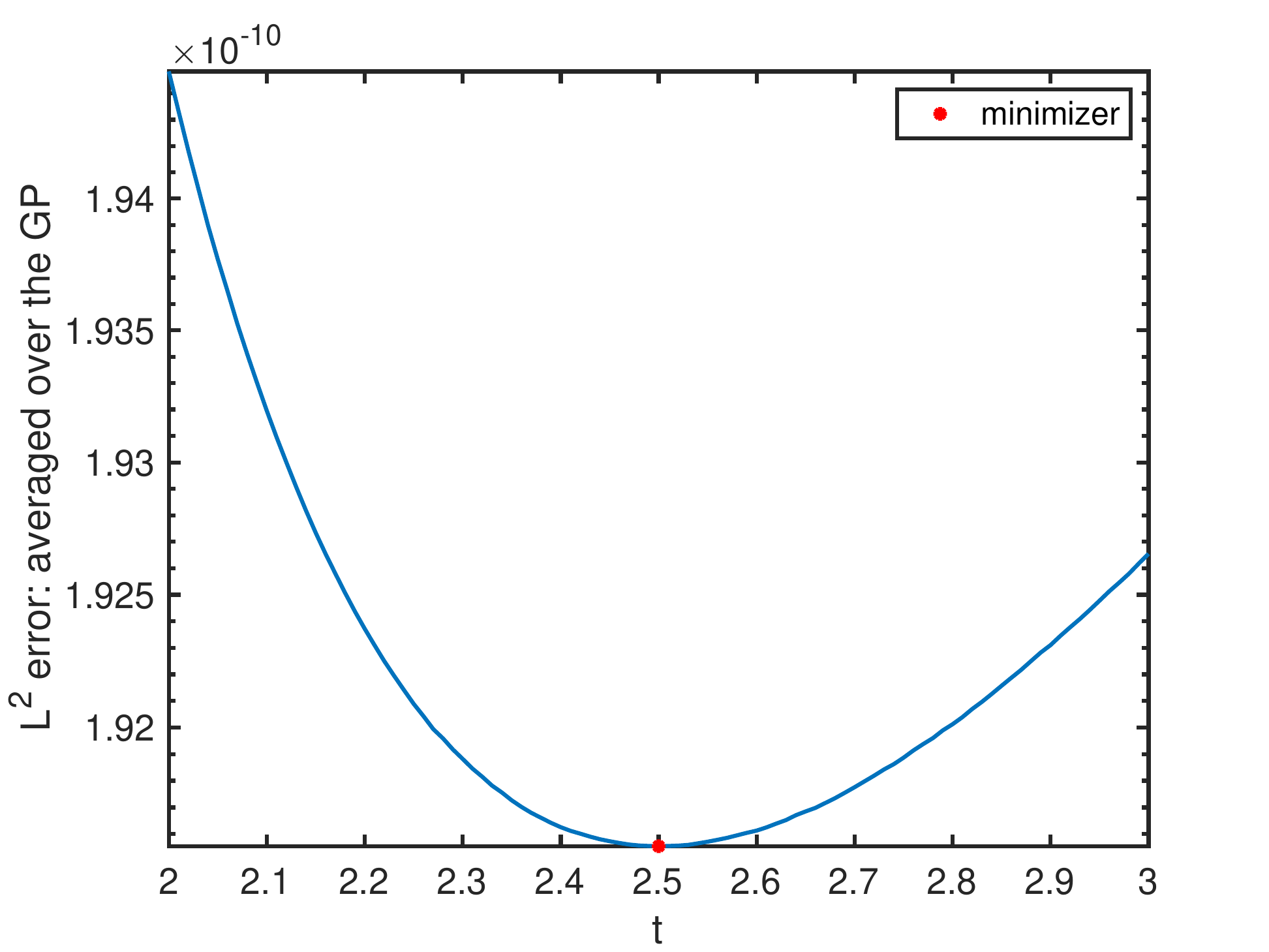}
    \caption{$L^2$ error: averaged over the GP, for $q=9$}
    \label{fig: L^2 error: ave GP}
    \end{figure}

    In summary, the second set of numerical experiments indicates the following implications for the regression accuracy of the EB and KF approaches to hierarchical parameter estimation. The KF estimator selects the minimal $t$ that suffices to achieve the fastest rate of approximation error in the $L^2$ norm for a given fixed truth; in contrast, the EB estimator converges to the $t$ that achieves the minimal $L^2$ error, averaged over the draw $\ud \in \cN(0,(-\Delta)^{-s})$. Note that KF is based on purely approximation
    theoretic considerations whilst EB is founded on statistical considerations --- they attain very different implicit bias in selecting parameters.
    
    \subsubsection{Further Discussion of The Theory} We provide some further discussions of the implications of Theorem~\ref{thm: consistency thm} in this subsection. The theory shows that the EB estimator recovers the ground truth parameter $s$ of the statistical model. This is in line with expectations since the methodology is designed to recover the most likely value of $s$,
    given the data, and since the Gaussian measures occurring for different $s$ are mutually singular. In the literature, such consistency results are primarily for observational data in the Fourier domain; thus, the observation operator commutes with the prior. Here, our data model is in the physical domain, which leads to the need for considerably more sophisticated analysis, due to the noncommutativity of the observation operator and the prior operator, and yet is a much more practically useful setting, justifying the investment in the somewhat involved analysis. Our proof provides a novel sharp upper and lower bound on the terms $\|u(\cdot,t,q)\|_t^2$ and $\log \det K(t,q)$, based on techniques in approximation theory and the multiresolution analysis developed in \cite{owhadi2019operator}. Our techniques may have broader applications in analyzing the observational model in the physical domain.
    
    Another interesting phenomenon shown in Theorem~\ref{thm: consistency thm} is that the KF estimator, first proposed in \cite{owhadi2019kernel} as a method to learn kernels for machine learning tasks, achieves a rather different consistency behavior, with the large data limit being $\frac{s-d/2}{2}$. This fact has the following consequence: if the ground truth function $\ud$ has homogeneous critical regularity $s-d/2$, then the KF estimator will converge to half the critical
    regularity in the large data limit. 
    
To understand the mechanism behind this effect, we observe that the KF loss is a surrogate for the (relative) $\|\cdot\|_t$-norm approximation error between $u^\dagger$ and $u(\cdot,t,q)$. Furthermore, approximation theory implies that the 
    GP regressor $u(\cdot,t,q)$ is also the optimal $\|\cdot\|_t$-norm approximant of $u^\dagger$ in the linear span of the basis functions $\{(-\Delta)^{-t}\updelta(x-x_j) \}_{j \in J_q}$. Under this perspective, we see the KF loss incorporates two competing factors in the approximation:  increasing $t$ improves the approximation error by increasing the regularity of the basis functions while worsening the measurement of that approximation error by using a stronger norm. 
    The balance between these two competing factors is achieved when $t$ is half the critical regularity, which is the parameter that KF eventually picks. Our proof provides a detailed demonstration of this phenomenon.
    
    In short, EB learns hierarchically based on statistical principles,
    whilst KF learns based on approximation theoretic ones. The consistency results presented here provide evidence that the interplay between statistical estimation and numerical approximation can be very useful for parameter estimation and kernel learning in general, thus suggesting new ways of thinking hierarchically. This perspective is one of the main messages that we convey in this paper.
    
     \subsubsection{Proof Strategy}
     The following Subsections \ref{sec: Fourier}, \ref{sec: consistency EM estimator}, \ref{sec: consistency KF estimator} are devoted to proving the above Theorem \ref{thm: consistency thm}. For the sake of understanding, we provide a high-level view of our proof strategies in this subsection. Fourier analysis plays an important role in the proof. It allows us to analyze the approximation error in a very precise way under this equidistributed design setting.
     
     In our proof, we begin by establishing tight bounds on the terms that appear in the objective functions, i.e., $\|u(\cdot,t,q)\|_t^2$, $\log \det K(t,q)$ and $\|u(\cdot,t,q)-u(\cdot,t,q-1)\|_t^2$, using the toolkit we develop in Subsection \ref{sec: Fourier}. 
    The norms $\|u(\cdot,t,q)\|_t^2$ and $\|u(\cdot,t,q)-u(\cdot,t,q-1)\|_t^2$ are expressed
    as random (as a function of $u^\dagger$) series and we carefully analyze the dependencies of the random variables to establish the convergence in probability. For $\log \det K(t,q)$, we employ the multiresolution approach introduced in \cite{owhadi2019operator} to establish a tight estimate of the spectrum of the Gram matrix from below and above. Given these estimates, we provide an intuitive understanding of how the loss functions behave and how the minimizers converge in Subsections \ref{sec: consistency EM estimator}, \ref{sec: consistency KF estimator}. In the rigorous treatment, the sharp bounds on the different components of the objective functions will be combined with the uniform convergence result of random series in \cite{van1996weak} to obtain the convergence of minimizers.
    
    \subsubsection{Notations}  In many parts of the analysis, we need to develop tight estimates on the terms appearing in the loss functions. Some useful notation for comparing different terms are introduced here. We write $A \simeq B$ if there exists a constant $C$ independent of $q, t$ such that 
    \[\frac{1}{C}B\leq A\leq CB\,. \]
    The constant may depend on the dimension $d$ and on $\delta$. Correspondingly, if we use $A \gtrsim B$ or $A \lesssim B$, then only one side of the above inequality holds. 
    
    Fourier analysis plays a critical role in the analysis. We always use $\ud$ for the ground truth function, while we omit the $\dagger$ symbol for ease of notation when discussing its Fourier transform, and write $\hat{u}$; we will also use $\hat{u}$, with
    more arguments, to denote the Fourier transform of the Gaussian process
    mean; see the discussion following Theorem \ref{thm: Fourier representation for Pu}. In the Fourier domain, we let $B_q:=\{m \in \bZ: -2^{q-1}\leq m \leq 2^{q-1}-1\}$ and $B_q^d=B_q\otimes B_q\otimes \cdots\otimes B_q$ be the tensor product of $d$ multiples of $B_q$. {We have that $B_q^d$ is a box concentrating around the origin, so only the low-frequency part of the Fourier coefficients are considered. }

    \subsection{Toolkit: Fourier Series Characterization} \label{sec: Fourier}
    In this subsection, we prepare the necessary tools that are used to prove the main theorem of this paper. 
    
    We start by establishing a Fourier series characterization for $u(\cdot,t,q)$. This is a key ingredient in expressing the terms in the loss functions as random series. Our approach,
    using Fourier series, is motivated by the papers \cite{de1994approximation, ron_l2-approximation_1992}, where the approximation power of shift-invariant subspaces of $L^2(\bR^d)$ is studied; in our case we use related ideas in the $\dot{L}^2(\bT^d)$ setting.
    
    To find the representation of the term $u(\cdot,t,q)$, we invoke its definition, i.e. $u(\cdot,t,q)$ is obtained by GP regression with the $q$-level data and the covariance function $(-\Delta)^{-t}$. We use the representer theorem from GPR. Concretely, let the set of basis functions be \[\cF_{t,q}=\text{span}_{j \in J_q} \{(-\Delta)^{-t}\updelta(\cdot-x_j)\}\, ,\] then, $u(\cdot,t,q)$ is the best approximation in $\cF_{t,q}$ to the true function under the $\|\cdot\|_t$ norm. Let us define \[\hat{\cF}_{t,q}:=\{g: \bZ^d \to \bC, \text{there exists an} \ f \in \cF_{t,q}\  \text{such that}\  g=\hat{f}  \},\] the Fourier coefficients of functions in $\cF_{t,q}$. A quick observation is that for every $g \in \hat{\cF}_{t,q}$, we must have $g(0)=0$ because of the mean zero property of $f \in \cF_{t,q}$. The following proposition gives a complete characterization of the basis functions in $\hat{\cF}_{t,q}$, for $t > d/2$. 
    \begin{proposition}
        \label{prop: characterize F(t,q)}
        For any $g \in \hat{\cF}_{t,q}$, there exists a $2^q$-periodic function $p$ on $\bZ^d$, such that
        \begin{equation*}
        g(m)=\begin{cases}
        |m|^{-2t}p(m), &m \neq 0\\
        0, &m=0\, .
        \end{cases}
        \end{equation*} 
    \end{proposition}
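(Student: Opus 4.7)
The plan is to derive the Fourier representation of a generic element of $\cF_{t,q}$ by linearity from the Fourier coefficients of the basic atoms $(-\Delta)^{-t}\updelta(\cdot - x_j)$ and then read off the announced factorization.

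First I would record that, in the $\dot{L}^2(\bT^d)$ sense, the mean-subtracted Dirac $\updelta(\cdot - x_j)$ has Fourier coefficients $\phi_m^*(x_j) = e^{-2\pi i \langle m, x_j\rangle}$ at $m\neq 0$ and $0$ at $m=0$. Because $(-\Delta)^{-t}$ acts diagonally on the Fourier basis with eigenvalues $(4\pi^2|m|^2)^{-t}$, the atom $(-\Delta)^{-t}\updelta(\cdot - x_j)$ has Fourier coefficient $(4\pi^2|m|^2)^{-t}\,e^{-2\pi i \langle m, j\rangle/2^q}$ at $m\neq 0$, using $x_j = j\cdot 2^{-q}$. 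The standing assumption $t > d/2$ ensures that these atoms actually lie in $\dot{L}^2(\bT^d)$, so the formal manipulation is legitimate.

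Next, any $f \in \cF_{t,q}$ is a finite linear combination $f = \sum_{j \in J_q} c_j (-\Delta)^{-t}\updelta(\cdot - x_j)$. Pulling the common factor $|m|^{-2t}$ out of the sum gives $\hat{f}(m) = |m|^{-2t}\,p(m)$ for $m \neq 0$, where $p(m) := (4\pi^2)^{-t}\sum_{j \in J_q} c_j\, e^{-2\pi i \langle m,j\rangle/2^q}$, while $\hat{f}(0)=0$ by the mean-zero convention. To check $2^q$-periodicity of $p$, I would observe that shifting $m \mapsto m + 2^q e_k$ for any coordinate direction $e_k$ multiplies each summand of $p(m)$ by $e^{-2\pi i j_k}$, which equals $1$ because $j_k \in \bZ$. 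There is no serious obstacle — the proposition is essentially bookkeeping, with the only subtlety being the justification of the Fourier computation at the distributional level, which is handled by $t > d/2$.
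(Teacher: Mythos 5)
Your proof is correct and follows essentially the same route as the paper: compute the Fourier coefficients of the atoms $(-\Delta)^{-t}\updelta(\cdot-x_j)$, take the linear combination, factor out $|m|^{-2t}$, and absorb the exponentials (and the constant $(4\pi^2)^{-t}$) into a $2^q$-periodic function $p$. Your explicit verification of the periodicity under $m\mapsto m+2^q e_k$ is a detail the paper leaves implicit, but the argument is the same.
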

        The proof is in Subsection \ref{Proof of Proposition prop: characterize F(t,q)}. Next, we define a $2^q$-periodization operator, which will be used to compute the representation of $\hat{u}(m,t,q)$.
        \begin{definition}
            \label{def: periodization}
            The operator $T_q$ is defined as a mapping from the space of functions on $\bZ^d$ to itself, such that
            \[(T_qg)(m):=\sum_{\beta \in \bZ^d}g(m+2^q\beta),\quad m \in \bZ^d\, , \]
         whenever the right hand side series converges for the function $g: \bZ^d \to \bR$. We also define
         \begin{equation}
         \label{eqn: def of M_q^t(m)}
         M_q^t(m):=
         \begin{cases} \sum_{\beta \in \bZ^d \backslash \{0\}} |2^q\beta|^{-2t}, & \text{if}  \ m=j\cdot2^q \ \text{for some} \ j \in \bZ^d\\
         \sum_{\beta \in \bZ^d} |m+2^{q}\beta|^{-2t}, & \text{else}\, .
         \end{cases}
         \end{equation}
        \end{definition}
        
        Both $T_q g$ and $M_q^t$ are $2^q$-periodic functions on $\bZ^d$. 
        Based on this definition, Theorem \ref{thm: Fourier representation for Pu} presents the explicit form of the Fourier transform of $u(\cdot,t,q)$; the proof is in Subsection \ref{Proof of Theorem thm: Fourier representation for Pu}. The proof relies on the Galerkin orthogonality property of $u(\cdot,t,q)$ due to its being the optimal approximate solution. 
        
        \begin{theorem}
        \label{thm: Fourier representation for Pu}
        Let $\hat{u}(\cdot,t,q)$ be the Fourier coefficients of $u(\cdot,t,q)$, then for $m \in \bZ^d$, we have
        \[\hat{u}(m,t,q)=
        \begin{cases}
        0, & \text{if} \ m = 0\\
        |m|^{-2t}\frac{(T_q\hat{u})(m)}{M_q^t(m)}, & \text{else}
        \end{cases} \]
        where $\hat{u}$ denotes the Fourier coefficients of $\ud$.
    \end{theorem}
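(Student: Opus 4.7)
The plan is to exploit Galerkin orthogonality together with the characterization of $\hat{\cF}_{t,q}$ in Proposition \ref{prop: characterize F(t,q)} and the periodization operator $T_q$. The paper already states that $u(\cdot,t,q)$ is the $\|\cdot\|_t$-best approximation to $\ud$ in $\cF_{t,q}$; equivalently, $u(\cdot,t,q)$ is the orthogonal projection of $\ud$ onto $\cF_{t,q}$ under the inner product $[\cdot,\cdot]_t$ induced by $\|\cdot\|_t$ (the functions $(-\Delta)^{-t}\updelta(\cdot-x_j)$ are precisely the Riesz representers of the evaluation functionals $v\mapsto v(x_j)$). Thus
\[
[\ud - u(\cdot,t,q),\, h]_t = 0 \qquad \forall h \in \cF_{t,q}.
\]

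Next I translate this orthogonality to the Fourier side. By Proposition \ref{prop: characterize F(t,q)}, write $\hat{u}(m,t,q) = |m|^{-2t}p(m)$ for $m\neq 0$ with $p$ some $2^q$-periodic function on $\bZ^d$, and any test $h \in \cF_{t,q}$ has $\hat{h}(m) = |m|^{-2t}p_h(m)$ for a $2^q$-periodic $p_h$. Using Parseval, the weights $(4\pi^2|m|^2)^t$ from $[\cdot,\cdot]_t$ cancel with the two factors of $|m|^{-2t}$ from $\hat{h}$ (and from the reference to $\hat{u}(\cdot,t,q)$), so the orthogonality relation reduces to
\[
\sum_{m \in \bZ^d\setminus\{0\}} \bigl[\hat{u}(m)-\hat{u}(m,t,q)\bigr]\overline{p_h(m)} = 0,
\]
for every $2^q$-periodic $p_h$. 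Grouping the sum over cosets of $2^q\bZ^d$ and using arbitrariness of $p_h$ on a fundamental domain, this is equivalent to $T_q\bigl[\hat{u} - \hat{u}(\cdot,t,q)\bigr] \equiv 0$ on $\bZ^d$ (the contribution of $m=0$ is harmless since both $\hat{u}(0)=0$ and $\hat{u}(0,t,q)=0$).

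Finally I evaluate $T_q\hat{u}(\cdot,t,q)$ explicitly. Substituting $\hat{u}(m+2^q\beta,t,q) = |m+2^q\beta|^{-2t}p(m+2^q\beta) = |m+2^q\beta|^{-2t}p(m)$ (by $2^q$-periodicity of $p$) and factoring $p(m)$ out yields $T_q\hat{u}(\cdot,t,q)(m) = p(m)\,M_q^t(m)$: for $m \notin 2^q\bZ^d$ all translates are nonzero so one obtains the full series $\sum_{\beta}|m+2^q\beta|^{-2t}$, while for $m=j\cdot 2^q$ the translate $\beta=-j$ produces the excluded $m=0$ term (whose contribution $\hat{u}(0,t,q)$ is $0$), matching exactly the two-case definition \eqref{eqn: def of M_q^t(m)} of $M_q^t$. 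Combining with the previous step gives $p(m)M_q^t(m)=(T_q\hat{u})(m)$, and multiplying by $|m|^{-2t}$ yields the claimed formula.

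The main obstacle is the bookkeeping in the last step: verifying that the peculiar two-case definition of $M_q^t$ emerges correctly from the periodization, which forces the $\beta=-j$ term to drop precisely because $\hat{u}(0,t,q)=0$. Everything else is a routine identification of the RKHS inner product with a weighted $\ell^2$ inner product on Fourier coefficients, combined with Proposition \ref{prop: characterize F(t,q)}.
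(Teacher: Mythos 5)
Your proposal is correct and follows essentially the same route as the paper: both derive the Fourier-domain orthogonality relation against $2^q$-periodic test functions (your RKHS-inner-product formulation and the paper's $[\ud-u(\cdot,t,q),\updelta(\cdot-x_j)]=0$ are the same interpolation conditions), periodize via $T_q$, and solve for the periodic factor $p$ from Proposition \ref{prop: characterize F(t,q)} to get $p(m)=(T_q\hat{u})(m)/M_q^t(m)$, with identical bookkeeping for the two cases in the definition of $M_q^t$. The only item the paper makes explicit that you elide is the absolute convergence of the periodized series (via Cauchy--Schwarz and $t\geq d/2+\delta$), a minor point.
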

    
    This above representation is very useful for analyzing the terms $\|u(\cdot,t,q)\|_t^2$ and $\|u(\cdot,t,q)-u(\cdot,t,q-1)\|_t^2$. 
    As well as studying the Fourier coefficients of $u(\cdot,t,q)$, which
    we denote by $\hat{u}(\cdot,t,q)$, we will also need to study the Fourier coefficients of $\ud(\cdot)$ which, for ease of notation we 
    will denote by $\hat{u}(\cdot)$, henceforth, omitting the $\dagger$ symbol. It is thus important to look at the number of arguments
    of $\hat{u}$ to determine which object it is the Fourier transform of.
    Note also that $u(\cdot,t,q)$ is determined by $\ud$; hence if
    $\ud$ is random, so is $u(\cdot,t,q)$.
    
    We will use the above Fourier analysis toolkit to study the consistency of EB and KF in the following two subsections.
    \subsection{Proof for the Empirical Bayesian Estimator}
    \label{sec: consistency EM estimator}
    In this subsection, we prove the consistency of the EB estimator. As explained before, our roadmap is to give a tight estimate of the loss functions first and then analyze the minimizers. 
   For the norm term $\|u(\cdot,t,q)\|_t^2$, we invoke Theorem \ref{thm: Fourier representation for Pu}, based on which 
   this term is expressed as a random series:
    \begin{proposition}
        \label{prop: representation of H^t norm of u(,t,q)}
        The $\dot{H}^t(\bT^d)$ norm of $u(\cdot,t,q)$ has the representation
        \[\|u(\cdot,t,q)\|_t^2=(4\pi^2)^{t}\sum_{m \in B_q^d} \frac{|T_q\hat{u}(m)|^2}{M_q^t(m)}\, . \]
        Moreover, suppose $\ud \sim \cN(0,(-\Delta)^{-s})$ for $s > \frac{d}{2}$, then
        \[\|u(\cdot,t,q)\|_t^2=(4\pi^2)^{t-s}\sum_{m \in B_q^d} \frac{M_q^s(m)}{M_q^t(m)}\xi_m^2\, , \]
        where $\{\xi_m \}_{m \in B_q^d}$ are independent unit scalar Gaussian random variables.
    \end{proposition}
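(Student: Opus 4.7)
The plan is to derive the first (deterministic) identity by direct substitution into the Sobolev norm, using the representation from Theorem \ref{thm: Fourier representation for Pu}, and then specialize to the Gaussian prior for the second (probabilistic) identity.

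\emph{Step 1 (the deterministic formula).} By definition of the $\dot{H}^t(\bT^d)$-norm,
\[
\|u(\cdot,t,q)\|_t^2 = \sum_{m \in \bZ^d\setminus\{0\}} (4\pi^2|m|^2)^t\,|\hat{u}(m,t,q)|^2.
\]
Substituting $\hat{u}(m,t,q)=|m|^{-2t}(T_q\hat{u})(m)/M_q^t(m)$ from Theorem \ref{thm: Fourier representation for Pu}, the exponent on $|m|$ collapses and I obtain
\[
\|u(\cdot,t,q)\|_t^2 = (4\pi^2)^t\sum_{m\in\bZ^d\setminus\{0\}} \frac{|m|^{-2t}\,|(T_q\hat{u})(m)|^2}{(M_q^t(m))^2}.
\]
I would then partition $\bZ^d\setminus\{0\}$ into residue classes $m = m_0 + 2^q\beta$ with $m_0\in B_q^d$ (for $m_0=0$, the class excludes $\beta=0$). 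Because both $T_q\hat{u}$ and $M_q^t$ are $2^q$-periodic, they can be factored out of the inner sum, and the remaining inner sum $\sum_{\beta}|m_0+2^q\beta|^{-2t}$ is exactly $M_q^t(m_0)$ by Definition \ref{def: periodization} (including the exclusion at $m_0=0$). One factor of $M_q^t(m_0)$ cancels, leaving the desired expression.

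\emph{Step 2 (the probabilistic formula).} For $\ud \sim \cN(0,(-\Delta)^{-s})$, the Fourier coefficients $\{\hat u(m)\}_{m\neq 0}$ are centered Gaussians with covariance $\bE[\hat u(m)\overline{\hat u(m')}] = (4\pi^2|m|^2)^{-s}\updelta_{m,m'}$ (this reads off from the Karhunen--Lo\`eve expansion \eqref{eqn:sample drawn GP}). Since $T_q\hat u(m_0)$ is a linear combination of $\hat u(m_0+2^q\beta)$ over disjoint values of $m$ for distinct $m_0 \in B_q^d$, these are jointly Gaussian with
\[
\bE|T_q\hat u(m_0)|^2 = \sum_{\beta} (4\pi^2|m_0+2^q\beta|^2)^{-s} = (4\pi^2)^{-s}M_q^s(m_0),
\]
(with the $\beta=0$ term removed when $m_0=0$). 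Writing $|T_q\hat u(m_0)|^2 = (4\pi^2)^{-s}M_q^s(m_0)\,\xi_{m_0}^2$ and substituting into the formula from Step 1 produces the factor $(4\pi^2)^{t-s}M_q^s(m_0)/M_q^t(m_0)$ in the sum, matching the claim.

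\emph{Main obstacle.} The only delicate point is the assertion that the $\xi_{m_0}$ can be taken to be \emph{independent} unit Gaussians, since the reality of $\ud$ forces $\hat u(-m)=\overline{\hat u(m)}$, so the residue classes indexed by $m_0$ and $-m_0$ are not algebraically independent. I would resolve this by grouping $B_q^d$ into conjugate pairs $\{m_0,-m_0\}$ and self-conjugate indices (where $2m_0 \equiv 0 \pmod{2^q}$); on a self-conjugate index $T_q\hat u(m_0)$ is one real Gaussian, while on a conjugate pair the real and imaginary parts yield two independent real Gaussians whose combined squared modulus has the required distribution. Either one reinterprets the sum using a real Fourier basis so that the $\xi_m$ are literally independent $\cN(0,1)$ variables, or one verifies that the distributional identity in Proposition \ref{prop: representation of H^t norm of u(,t,q)} is to be read with this pairing understood. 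I would include a brief lemma confirming this identification, after which the formula follows immediately from Step 1.
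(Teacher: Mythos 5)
Your proposal is correct and follows essentially the same route as the paper: substitute the representation of $\hat{u}(m,t,q)$ from Theorem \ref{thm: Fourier representation for Pu} into the Sobolev norm, use the $2^q$-periodicity of $T_q\hat{u}$ and $M_q^t$ to fold the sum over $\bZ^d\setminus\{0\}$ into a sum over $B_q^d$ (cancelling one factor of $M_q^t$), and then read off the variance $(4\pi^2)^{-s}M_q^s(m)$ of $T_q\hat{u}(m)$ from the independence of the Fourier coefficients across distinct residue classes. The one place you go beyond the paper is the conjugate-symmetry issue: the paper simply declares the $\hat{u}(m)$ independent for distinct $m$ (consistent with its complex Karhunen--Lo\`eve convention in \eqref{eqn:sample drawn GP}) and does not address the constraint $\hat{u}(-m)=\overline{\hat{u}(m)}$ for real samples; your pairing of $m_0$ with $-m_0$ and the observation that the weights $M_q^s(m)/M_q^t(m)$ are invariant under $m\mapsto -m$ is a legitimate and slightly more careful way to justify the same distributional identity.
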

    \begin{proof}
        Using Theorem \ref{thm: Fourier representation for Pu}, we get
        \begin{align*}
        \|u(\cdot,t,q)\|_t^2=&\sum_{m \in \bZ^d \backslash \{0\}} (4\pi^2)^{t}|m|^{2t}|\hat{u}(m,t,q)|^2\\
        =&(4\pi^2)^{t}\sum_{m \in \bZ^d \backslash \{0\}}|m|^{-2t}\frac{|T_q\hat{u}(m)|^2}{|M_q^t(m)|^2}\\
        =&(4\pi^2)^{t} \sum_{m \in B_q^d} M_q^t(m)\frac{|T_q\hat{u}(m)|^2}{|M_q^t(m)|^2}\\
        =&(4\pi^2)^{t}\sum_{m \in B_q^d} \frac{|T_q\hat{u}(m)|^2}{M_q^t(m)}\, .
        \end{align*}
        where in the third equality, we use the periodicity of the function $\frac{|T_q\hat{u}(m)|^2}{|M_q^t(m)|^2}$. 
        
        If we further assume $\ud \sim \cN(0,(-\Delta)^{-s})$, then $\hat{u}(m) \sim \cN(0,(4\pi^2)^{-s}|m|^{-2s})$. For different $m$, these Gaussian random variables are independent. Thus, for different $m \in B_q^d$, we have $T_q\hat{u}(m) \sim \cN(0,(4\pi^2)^{-s}M^s_q(m))$, and they are independent. So we can write
        \[\sum_{m \in B_q^d} \frac{|T_q\hat{u}(m)|^2}{M_q^t(m)}= (4\pi^2)^{-s}\sum_{m \in B_q^d} \frac{M_q^s(m)}{M_q^t(m)}\xi_m^2\, ,\]
        where $\{\xi_m \}_{m \in B_q^d}$ are independent unit scalar Gaussian random variables. 
    \end{proof}
    
    The independence of the random variables established in the preceding
    representation is crucial for the analysis.  The terms $M_q^s(m), M_q^t(m)$ appear in the preceding; to analyze them we present a useful lemma below. The proof is in Subsection \ref{Proof of Lemma lemma: estimate of the M t q term}.
    \begin{lemma}
    \label{lemma: estimate of the M t q term}
    For $t \in [d/2+\delta, 1/\delta]$ and $q \geq 0$, we have 
    \begin{equation*}
        M_q^t(m)\simeq \begin{cases}
        2^{-2qt},\ \text{if} \ m = 0\\
        |m|^{-2t},\  \text{if} \ m \in B_q^d \backslash \{0\}
        \end{cases}
    \end{equation*}
    Moreover, for $m \in B_q^d \backslash \{0\}$, we have $M_q^t(m)-|m|^{-2t} \simeq 2^{-2qt}$.
    \end{lemma}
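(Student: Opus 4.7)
The plan is to treat the two cases in the definition of $M_q^t$ separately. The $m=0$ case is a direct computation, while the $m \in B_q^d \setminus \{0\}$ case reduces, via rescaling, to a uniform lattice-sum estimate. For $m = 0$, the first branch gives $M_q^t(0) = 2^{-2qt}\sum_{\beta \in \bZ^d \setminus \{0\}} |\beta|^{-2t}$. Since $2t \in [d+2\delta, 2/\delta]$, the series converges absolutely, and is bounded above and below by positive constants depending only on $d$ and $\delta$, so $M_q^t(0) \simeq 2^{-2qt}$.

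For $m \in B_q^d \setminus \{0\}$, the bound $|m_i| \leq 2^{q-1}$ rules out $m = 2^q j$ with $j \neq 0$, so the second branch of the definition applies. Separating the $\beta = 0$ term and rescaling via $y := m/2^q \in [-\tfrac{1}{2}, \tfrac{1}{2})^d$, I write
\[
M_q^t(m) = |m|^{-2t} + 2^{-2qt}\, S_t(y), \qquad S_t(y) := \sum_{\beta \in \bZ^d \setminus \{0\}} |y + \beta|^{-2t}.
\]
The central technical step is to show $S_t(y) \simeq 1$ uniformly in $y \in [-\tfrac{1}{2}, \tfrac{1}{2}]^d$ and $t \in [d/2+\delta, 1/\delta]$. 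For the upper bound I split the sum: for $|\beta| \leq \sqrt{d}$, picking any coordinate $i$ with $\beta_i \neq 0$ gives $|y_i + \beta_i| \geq |\beta_i| - |y_i| \geq 1/2$, so $|y+\beta| \geq 1/2$ and $|y+\beta|^{-2t} \leq 2^{2/\delta}$, and there are only finitely many such $\beta$; for $|\beta| > \sqrt{d}$, the triangle inequality gives $|y + \beta| \geq |\beta| - \sqrt{d}/2 \geq |\beta|/2$, and the resulting tail $\sum |\beta|^{-2t}$ converges since $2t > d$, with an integral comparison yielding a bound of order $1/(2t - d) \leq 1/(2\delta)$. For the lower bound, a single term $|y + e_1|^{-2t}$ (with $e_1$ a coordinate unit vector) suffices, since $|y + e_1| \leq 1 + \sqrt{d}/2$ gives a positive lower bound depending only on $d$ and $\delta$.

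Given $S_t(y) \simeq 1$, the third claim $M_q^t(m) - |m|^{-2t} \simeq 2^{-2qt}$ is immediate. For the second claim, $|m| \leq 2^{q-1}\sqrt{d}$ for $m \in B_q^d$ yields $|m|^{-2t} \gtrsim 2^{-2qt}$, so the correction $2^{-2qt} S_t(y)$ is at most a constant times $|m|^{-2t}$, and $M_q^t(m) \simeq |m|^{-2t}$ follows. The main obstacle is maintaining uniformity of all constants in $t$; this is handled precisely because the compact parameter range $[d/2+\delta, 1/\delta]$ is bounded away from the critical value $t = d/2$ at which the lattice series diverges, so every bound above depends continuously on $t$ over a compact set.
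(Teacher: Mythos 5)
Your proof is correct and follows essentially the same route as the paper's: separate the $\beta=0$ term, rescale by $2^{-q}$, and observe that the remaining lattice sum is uniformly of order one over the compact parameter range, then use $|m|\lesssim 2^q$ to absorb the $2^{-2qt}$ correction into $|m|^{-2t}$. The only difference is that you spell out the uniform upper and lower bounds on $S_t(y)$ that the paper asserts with a bare $\simeq$, which is a welcome addition but not a different argument.
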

    Now, we are ready to get the estimates of the loss function. The following proposition shows an upper and lower bound on the norm term. 
    \begin{proposition}[Bound on the norm term]
        \label{prop: |u(x,t,q)|^2 bound}
        Suppose $\ud$ is a sample drawn from the Gaussian process $\cN(0,(-\Delta)^{-s})$ for $d/2+\delta \leq s \leq 1/\delta$, then
        \[\|u(\cdot,t,q)\|_t^2 \simeq 2^{-q(2s-2t)}\xi_0^2+\sum_{m \in B_q^d \backslash \{0\}} |m|^{2t-2s}\xi_m^2\, ,\]
        where $\{\xi_m \}_{m \in B_q^d}$ are independent unit scalar Gaussian random variables.
    \end{proposition}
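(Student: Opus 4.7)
The plan is to combine the exact representation from Proposition~\ref{prop: representation of H^t norm of u(,t,q)} with the termwise two-sided bound on $M_q^t(m)$ provided by Lemma~\ref{lemma: estimate of the M t q term}. Indeed, Proposition~\ref{prop: representation of H^t norm of u(,t,q)} already expresses $\|u(\cdot,t,q)\|_t^2$ as a weighted sum of independent chi-squared summands
\[
\|u(\cdot,t,q)\|_t^2 = (4\pi^2)^{t-s}\sum_{m \in B_q^d}\frac{M_q^s(m)}{M_q^t(m)}\xi_m^2,
\]
so the entire task reduces to showing that the weights $(4\pi^2)^{t-s}\frac{M_q^s(m)}{M_q^t(m)}$ satisfy $\simeq 2^{-q(2s-2t)}$ when $m=0$ and $\simeq |m|^{2t-2s}$ when $m\in B_q^d\setminus\{0\}$.

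First, I would observe that the prefactor $(4\pi^2)^{t-s}$ is bounded above and below by constants depending only on $\delta$, since $t,s\in[d/2+\delta,1/\delta]$. Hence it can be absorbed into the $\simeq$ relation and plays no further role. Second, I would split the sum into the $m=0$ term and the $m\neq 0$ terms. For the $m=0$ case, Lemma~\ref{lemma: estimate of the M t q term} gives $M_q^s(0)\simeq 2^{-2qs}$ and $M_q^t(0)\simeq 2^{-2qt}$, and since these two-sided bounds have constants independent of $q,s,t$, their ratio satisfies
\[
\frac{M_q^s(0)}{M_q^t(0)} \simeq 2^{-2q(s-t)} = 2^{-q(2s-2t)}.
\]
For $m\in B_q^d\setminus\{0\}$, the same lemma yields $M_q^s(m)\simeq |m|^{-2s}$ and $M_q^t(m)\simeq |m|^{-2t}$, hence
\[
\frac{M_q^s(m)}{M_q^t(m)} \simeq |m|^{-2s+2t} = |m|^{2t-2s}.
\]

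Combining these two cases and substituting into the representation formula gives exactly the stated equivalence. There is no real obstacle here: the entire argument is a direct bookkeeping calculation, and the only thing to be careful about is verifying that the implicit constants in Lemma~\ref{lemma: estimate of the M t q term} are uniform in $t,s\in[d/2+\delta,1/\delta]$ (which is why the compactification parameter $\delta$ was introduced), so that taking ratios preserves the $\simeq$ relation with constants depending only on $d$ and $\delta$. The independence and unit-variance properties of the $\xi_m$ are inherited directly from Proposition~\ref{prop: representation of H^t norm of u(,t,q)} and require no further work.
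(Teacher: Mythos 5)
Your proposal is correct and follows essentially the same route as the paper's own proof: both start from the random-series representation in Proposition~\ref{prop: representation of H^t norm of u(,t,q)}, split off the $m=0$ term, and apply the two-sided bounds of Lemma~\ref{lemma: estimate of the M t q term} to the ratio $M_q^s(m)/M_q^t(m)$, absorbing the $(4\pi^2)^{t-s}$ prefactor into the $\simeq$ constants. Your explicit remark that the implied constants are uniform over $t,s\in[d/2+\delta,1/\delta]$ is a welcome precision that the paper leaves implicit.
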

    \begin{proof}
        According to Lemma \ref{lemma: estimate of the M t q term},  for $m \in B_q^d \backslash \{0\}$, we have $M_q^t(m) \simeq |m|^{-2t}$; for $m = 0$, we have $M_q^t(m) \simeq 2^{-2tq}$. Thus,
        \begin{align*}
        \|u(\cdot,t,q)\|_t^2&=(4\pi^2)^{t-s}\sum_{m \in B_q^d} \frac{M_q^s(m)}{M_q^t(m)}\xi_m^2\\
        &=(4\pi^2)^{t-s}\left(\sum_{m \in B_{q}^d \backslash \{0\}}\frac{M_q^s(m)}{M_q^t(m)}\xi_m^2 + \frac{M_q^s(0)}{M_q^t(0)}\xi_0^2\right)\\
        &\simeq 2^{-q(2s-2t)}\xi_0^2+\sum_{m \in B_q^d \backslash \{0\}} |m|^{2t-2s}\xi_m^2\, .
        \end{align*}
        This completes the proof.
    \end{proof}
    Proposition \ref{prop: |u(x,t,q)|^2 bound} states that the behavior of the norm term is nothing but a weighted sum of squares of independent Gaussian random variables, which is amenable to
    analysis. With this in mind, we state a lemma useful in the analysis of such random series,
    with proof deferred to Subsection \ref{Proof of Lemma lemma: uniform convergence of series}.
    \begin{lemma}
        \label{lemma: uniform convergence of series}
        Suppose $\{\xi_m\}_{m \in \bZ^d}$ are independent unit Gaussian random variables. 
        \begin{itemize}
            \item For $r > 0$, define the random series 
                \[ \alpha(r,q)=2^{-qr}\sum_{m \in B_q^d \backslash \{0\}} |m|^{r-d}\xi_m^2\, . \]
                Fix $\epsilon>0$, then there exists a function $\gamma(r) > 0$ such that $\lim_{q\to \infty} \alpha(r,q) = \gamma(r)>0$ uniformly for $r \in [\epsilon,1/\epsilon]$, where the convergence is in probability.
            \item For $r=0$, define 
        \[\alpha(0,q)=\frac{1}{q}\sum_{m \in B_q^d \backslash \{0\}} |m|^{-d}\xi_m^2\, , \]
        then there exists $\gamma(0)\in (0,\infty)$ such that $\lim_{q \to \infty} \alpha(0,q)=\gamma(0)$ in probability.
        \end{itemize} 
    \end{lemma}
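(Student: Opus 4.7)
The plan is to establish pointwise convergence of $\alpha(r,q)$ to its mean $\gamma(r)$ via the first and second moment method, and then to promote this to uniform convergence on $r\in[\epsilon,1/\epsilon]$ using an equicontinuity argument in the spirit of the empirical-process tools cited from \cite{van1996weak}. The cases $r>0$ and $r=0$ will be treated separately, since the normalizations differ.

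For $r\in[\epsilon,1/\epsilon]$, I would first make the substitution $y_m:=2^{-q}m$, which recasts
\[
\alpha(r,q) \;=\; 2^{-qd}\sum_{m\in B_q^d\setminus\{0\}} |y_m|^{r-d}\,\xi_m^2
\]
as a random Riemann sum on the lattice $2^{-q}\bZ^d\cap[-1/2,1/2)^d$ with volume element $2^{-qd}$. Taking expectations shows $\mathbb{E}\alpha(r,q)$ approximates
\[
\gamma(r)\;:=\;\int_{[-1/2,1/2)^d}|y|^{r-d}\,\rd y,
\]
which is finite and strictly positive precisely because $r>0$ (the integrand has an integrable singularity at the origin, $\rho^{r-1}\,\rd\rho$ in polar coordinates). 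The error from the handful of lattice points closest to the origin is at most $O(2^{-qr})$, so $\mathbb{E}\alpha(r,q)\to\gamma(r)$ uniformly over compact subintervals of $(0,\infty)$.

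For the fluctuations, using $\mathrm{Var}(\xi_m^2)=2$ and independence gives
\[
\mathrm{Var}(\alpha(r,q))\;=\;2\cdot 2^{-2qd}\sum_{m\in B_q^d\setminus\{0\}} |y_m|^{2(r-d)},
\]
and a case analysis on the sign of $2r-d$ bounds the inner sum; in every regime the variance tends to zero uniformly over $r\in[\epsilon,1/\epsilon]$. Chebyshev then delivers pointwise convergence in probability. To upgrade this to uniform convergence, I would differentiate:
\[
\partial_r\alpha(r,q)\;=\;2^{-qd}\sum_{m\in B_q^d\setminus\{0\}}\log|y_m|\cdot|y_m|^{r-d}\,\xi_m^2,
\]
and apply the same Riemann-sum analysis to its mean and variance (both remain finite, since $\log|y|\cdot|y|^{r-d}$ is still integrable at the origin for $r>0$) to conclude that $\sup_{r\in[\epsilon,1/\epsilon]}|\partial_r\alpha(r,q)|$ is bounded in probability, uniformly in $q$. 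Combining this equicontinuity with pointwise convergence at a sufficiently fine finite grid in $[\epsilon,1/\epsilon]$ yields the desired uniform convergence in probability.

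For the case $r=0$, no uniformity is required: the standard lattice-point estimate $\sum_{0<|m|\leq R}|m|^{-d}=C_d\log R+O(1)$ gives $\mathbb{E}\alpha(0,q)\to\gamma(0):=C_d\log 2>0$, while
\[
\mathrm{Var}(\alpha(0,q))\;=\;\frac{2}{q^2}\sum_{m\in B_q^d\setminus\{0\}}|m|^{-2d}\;=\;O(q^{-2})\to 0,
\]
because $\sum_{m\in\bZ^d\setminus\{0\}}|m|^{-2d}$ is finite; Chebyshev closes the argument. The main obstacle I anticipate is the uniform-in-$r$ promotion in the $r>0$ case: the moment calculations are essentially Riemann-sum bookkeeping, but controlling the fluctuations of $\alpha(\cdot,q)$ in the sup norm over $r$ relies on the equicontinuity bound for $\partial_r\alpha$, which requires careful handling of the logarithmic singularity introduced by differentiating in $r$.
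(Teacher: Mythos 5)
Your proposal is correct and follows essentially the same route as the paper: identify $\mathbb{E}\,\alpha(r,q)$ as a Riemann sum converging to $\gamma(r)=\int_{[-1/2,1/2]^d}|y|^{r-d}\,\rd y$, kill the fluctuations by the variance bound plus Chebyshev, and upgrade to uniformity via equicontinuity in $r$. The paper implements your $\partial_r$-equicontinuity step through the mean value theorem, dominating the Lipschitz constant uniformly over $r\in[\epsilon,1/\epsilon]$ by the single random series $2^{-qd}\sum_m(|2^{-q}m|^{\epsilon-d}+|2^{-q}m|^{1/\epsilon-d})|\log(2^{-q}m)|\,\xi_m^2$ (shown bounded via Borel--Cantelli) before invoking Exercise 3.2.3 of van der Vaart--Wellner, which is exactly the domination your sketch would need to control the supremum over $r$.
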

    
    We then move to the second term in the loss function, i.e., the log determinant term. It is deterministic and to study it we need a way of analyzing the spectrum of the Gram matrix. The following Proposition \ref{prop: bound on the log term} gives upper and lower bounds on this term. The proof is in Subsection \ref{Proof of Proposition prop: bound on the log term} and is motivated by analysis developed in the paper \cite{owhadi2019operator}. The idea is to use the Schur complement of the Gram matrix and rely on the variational characterization of the Schur complement to get a tight control on the spectrum. {This technique is quite general and has been used in \cite{owhadi2019operator} to characterize the spectrum of heterogeneous Laplacian operators; here we adapt it to fractional operators. On the other hand, for the homogeneous fractional Laplacian operators in this paper, it is also possible to calculate an explicit formula for the spectrum of $K(t,q)$, as has been used in Section 6.7 of \cite{stein99book}. We 
    describe this simple proof in Subsection \ref{Proof of Proposition prop: bound on the log term} but retain the
    proof employing the more general methodology as it may be useful for other problems.}
    \begin{proposition}[Bound on the $\log \det$ term] 
        \label{prop: bound on the log term}
        For $d/2+\delta \leq t\leq 1/\delta$, we have 
        \[(2t-d)g_1(q)-Cg_2(q)+K(t,0)\leq \log \det K(t,q) \leq (2t-d)g_1(q)+Cg_2(q)+K(t,0) \, , \]
        where $g_1(q)=\sum_{k=1}^q (2^{kd}-2^{(k-1)d})(-k\log 2)$ and $g_2(q)=(2^{qd}-1)(2t-d)$. The constant $C$ is independent of $t,q$. Moreover, $g_1(q)\simeq -q2^{qd}$.
    \end{proposition}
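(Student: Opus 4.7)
My plan is to exploit the translation-symmetry of the equidistributed lattice $\cX_q$ on $\bT^d$, which makes $K(t,q)$ a (block-)circulant matrix that diagonalizes explicitly under the discrete Fourier transform on $J_q$; this reduces the proposition to a quantitative summation estimate for the aliased spectral density $M_q^t$.

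\emph{Step 1: Explicit diagonalization.} Because $K_t(x_j,x_k)$ depends only on $x_j-x_k$ and $\{x_j\}_{j\in J_q}$ is a finite subgroup of $\bT^d$, I can diagonalize $K(t,q)$ in the discrete Fourier basis on $J_q$. Substituting the Mercer expansion from Remark \ref{rmk:mercer} and applying the orthogonality identity $\sum_{j\in J_q}e^{2\pi i\langle k,j/2^q\rangle}=2^{qd}$ when $k\equiv 0\pmod{2^q}$ coordinatewise and $0$ otherwise, a direct computation gives the eigenvalues
\[\lambda_n(t,q)=2^{qd}(4\pi^2)^{-t}M_q^t(n),\qquad n\in B_q^d,\]
where $M_q^t$ is the periodization from Definition \ref{def: periodization}; the $n=0$ case correctly absorbs the mean-zero convention since the aliased sum then runs over $m=2^q\beta$ with $\beta\neq 0$.

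\emph{Step 2: Reduction to a log-aliasing sum.} Taking the logarithm,
\[\log\det K(t,q)=qd\log 2\cdot 2^{qd}-t\log(4\pi^2)\cdot 2^{qd}+\log M_q^t(0)+\sum_{n\in B_q^d\setminus\{0\}}\log M_q^t(n).\]
I will plug in Lemma \ref{lemma: estimate of the M t q term} to obtain $\log M_q^t(0)=-2qt\log 2+O(1)$ and $\log M_q^t(n)=-2t\log|n|+O(1)$ for $n\neq 0$, both uniformly in $t\in[d/2+\delta,1/\delta]$. Decomposing $B_q^d\setminus\{0\}$ into dyadic shells $B_k^d\setminus B_{k-1}^d$, each of cardinality $2^{kd}-2^{(k-1)d}$ and supporting $\log|n|=k\log 2+O(1)$, I get
\[\sum_{n\in B_q^d\setminus\{0\}}\log|n|=-g_1(q)+O(2^{qd}).\]
An Abel summation on $\sum_{k=1}^q k(2^{kd}-2^{(k-1)d})$ produces both $qd\log 2\cdot 2^{qd}=-d\,g_1(q)+O(2^{qd})$ and the advertised asymptotic $g_1(q)\simeq -q\,2^{qd}$. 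Combining all pieces,
\[\log\det K(t,q)=(2t-d)g_1(q)+O(2^{qd})+\kappa(t),\]
with $\kappa(t)$ bounded and $q$-independent on the parameter range. Since $2t-d\geq 2\delta>0$, the error $O(2^{qd})$ fits inside $Cg_2(q)=C(2^{qd}-1)(2t-d)$ for some $C=C(\delta,d)$, and $\kappa(t)$ is absorbed into the $K(t,0)$ term (consistent with the base case $q=0$, where both $g_1$ and $g_2$ vanish).

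\emph{Main obstacle.} The principal subtlety is the $t$-uniformity of every implicit constant: because the $O(1)$ terms from Lemma \ref{lemma: estimate of the M t q term} and from the shell expansion $\log|n|=k\log 2+O(1)$ accumulate $O(2^{qd})$ times, I must verify that each is bounded independently of both $n$ and $t\in[d/2+\delta,1/\delta]$, which is precisely why the compactified parameter interval is essential. As the proposition notes, a more robust alternative is the Schur-complement recursion $\log\det K(t,q)=\log\det K(t,q-1)+\log\det S_q$ with $S_q$ the GP posterior covariance on $\cX_q\setminus\cX_{q-1}$ given data on $\cX_{q-1}$; there one bounds the spectrum of $S_q$ level-by-level using the variational (gamblet-type) multiresolution analysis of \cite{owhadi2019operator}. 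That approach generalizes beyond the translation-invariant setting but is not needed here, where the circulant structure gives sharper constants with less work.
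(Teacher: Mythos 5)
Your proof is correct, and it is essentially the \emph{alternative} argument that the paper records in the remark following its main proof of Proposition \ref{prop: bound on the log term}: your Step 1 is precisely Lemma \ref{lem:add} (eigenvalues $2^{qd}(4\pi^2)^{-t}M_q^t(n)$, $n\in B_q^d$, via the Mercer expansion and the discrete orthogonality of $\phi_n(\cX_q)$), and your Step 2 reproduces the reduction $\log\det K(t,q)=qd2^{qd}\log 2-2^{qd}t\log(4\pi^2)+\sum_{n\in B_q^d}\log M_q^t(n)$ followed by Lemma \ref{lemma: estimate of the M t q term} and the estimate $\sum_{n\in B_q^d\setminus\{0\}}\log|n|\simeq q2^{qd}$. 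The paper's \emph{primary} proof instead proceeds by the Schur-complement recursion $\log\det K(t,q)=\log\det K(t,q-1)+\log\det(K(t,q)/K(t,q-1))$ and bounds the spectrum of each Schur complement by $2^{-q(2t-d)}$ from both sides using the variational characterization from \cite{owhadi2019operator} (a band-limited approximation of $w=(-\Delta)^{-t}\sum_j y_j\updelta(\cdot-x_j)$ plus the Parseval-type identity $\sum_{m\in B_q^d}|g(m)|^2\simeq 2^{qd}|y|^2$). What each route buys: yours is shorter and yields cleaner constants, but relies entirely on the exact simultaneous diagonalization available only for translation-invariant kernels on the lattice; the Schur-complement route is retained in the paper because it extends to heterogeneous (non-circulant) operators. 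Your handling of the two points that actually require care — the $t$-uniformity of the $O(1)$ errors (which is already built into the statement of Lemma \ref{lemma: estimate of the M t q term} since $\simeq$ there allows dependence only on $d,\delta$) and the absorption of all $O(2^{qd})$ errors into $Cg_2(q)$ using $2t-d\geq 2\delta$ — is sound.
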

    
   With the loss function analyzed by the above results, the consistency of
   the EB estimator is readily stated as follows.
    \begin{theorem}[Consistency of Empirical Bayesian estimator]
        \label{thm: Consistency of Empirical Bayesian estimator}
        Fix $\delta >0$. Suppose $\ud$ is a sample drawn from the Gaussian process $\cN(0,(-\Delta)^{-s})$. If
        $s \in [d/2+\delta,1/\delta]$ then
        \[\lim_{q \to \infty} s^{\mathrm{EB}}(q)=s\quad \mathrm{in\ probability}\, . \]
    \end{theorem}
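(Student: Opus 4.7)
My plan is to rescale the loss by $q\cdot 2^{qd}$ and prove that $\tilde{\mathsf{L}}^{\mathrm{EB}}(t,q) := \mathsf{L}^{\mathrm{EB}}(t,q)/(q\cdot 2^{qd})$ converges in probability to an explicit deterministic function $f(t)$, uniformly on compact subsets of $[d/2+\delta,1/\delta]$ avoiding $t=s$, such that the unique minimizer of $f$ on $[d/2+\delta,1/\delta]$ is $s$. Consistency of $s^{\mathrm{EB}}(q)$ then follows by a standard argmin argument. The deterministic log-determinant term is immediate: Proposition \ref{prop: bound on the log term}, together with the identity $g_1(q)/(q\cdot 2^{qd}) \to -\log 2$ (from summation by parts) and $g_2(q)+K(t,0)=O(2^{qd})$ uniformly in $t$, yields $\log\det K(t,q)/(q\cdot 2^{qd}) \to -(2t-d)\log 2$ uniformly on $[d/2+\delta,1/\delta]$.

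For the random norm term, Proposition \ref{prop: |u(x,t,q)|^2 bound} gives $\|u(\cdot,t,q)\|_t^2 \simeq 2^{-q(2s-2t)}\xi_0^2 + S_q(t)$, with $S_q(t) := \sum_{m\in B_q^d\setminus\{0\}} |m|^{2t-2s}\xi_m^2$. Setting $r=2t-2s+d$, this series matches the form in Lemma \ref{lemma: uniform convergence of series}. For $t\in[s+\epsilon,1/\delta]$ (so $r\geq d+2\epsilon$), the Lemma gives $S_q(t) = 2^{qr}(\gamma(r)+o_P(1))$ uniformly, hence $S_q(t)/(q\cdot 2^{qd}) \gtrsim 2^{2q\epsilon}/q \to +\infty$ uniformly. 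For $t\in[d/2+\delta, s-\epsilon]$, we apply the Lemma on the sub-range $r>0$ and exploit the fact that $S_q(t)$ is non-decreasing in $t$ (since $|m|^{2t-2s}=e^{(2t-2s)\log|m|}$ is non-decreasing in $t$ when $|m|\geq 1$) to absorb the sub-range $r\leq 0$: in either sub-case one obtains $S_q(t) = O_P(2^{q(d-2\epsilon)})$ uniformly, so $S_q(t)/(q\cdot 2^{qd})\to 0$ uniformly. Combining, $\tilde{\mathsf{L}}^{\mathrm{EB}}(t,q)$ converges in probability to
\[f(t) = \begin{cases}-(2t-d)\log 2, & t\leq s,\\ +\infty, & t>s,\end{cases}\]
uniformly on $[d/2+\delta,s-\epsilon]\cup[s+\epsilon,1/\delta]$ for every $\epsilon>0$.

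Since $f$ is strictly decreasing on $[d/2+\delta,s]$, we have $\inf_{[d/2+\delta,\,s-\epsilon]} f = f(s-\epsilon) = f(s) + 2\epsilon\log 2 > f(s)$. At $t=s$ exactly (so $r=d$, still an interior point of the Lemma's uniformity interval), $S_q(s)\simeq 2^{qd}\gamma(d)$ gives $\tilde{\mathsf{L}}^{\mathrm{EB}}(s,q)\to f(s)$ in probability. Combined with blow-up on $[s+\epsilon,1/\delta]$, this forces $s^{\mathrm{EB}}(q)\in(s-\epsilon,s+\epsilon)$ with probability tending to one, so $s^{\mathrm{EB}}(q)\to s$ in probability.

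The main technical obstacle is achieving uniform-in-$t$ stochastic control of $S_q(t)$ across the two qualitative transitions, at $r=0$ ($t=s-d/2$, where $S_q(t)$ switches between bounded, $q$-growing, and $2^{qr}$-growing regimes) and at $r=d$ ($t=s$, which produces the jump from $f(s)$ to $+\infty$). A finite partition of the $t$-interval into sub-intervals on each of which the Lemma applies, combined with the monotonicity of $S_q(t)$ in $t$ to reduce the $r\leq 0$ regime to a bound at the $r>0$ endpoint, is the key ingredient that makes the uniform-convergence argument go through.
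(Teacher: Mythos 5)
Your proposal is correct and follows essentially the same route as the paper's proof: rescale the loss by $q2^{qd}$ (the paper uses $|g_1(q)|\simeq q2^{qd}$, differing only by the constant $\log 2$), show the log-determinant term converges uniformly to a strictly decreasing linear function of $t$, control the random norm term via Proposition \ref{prop: |u(x,t,q)|^2 bound} and Lemma \ref{lemma: uniform convergence of series} with the same case split at $t=s$, $t\geq s+\epsilon$, and $t\leq s-\epsilon$ (including the same monotonicity-in-$t$ trick to absorb the regime $2t-2s+d\leq 0$), and conclude by the standard argmin argument. No substantive differences.
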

    The detailed proof is in Subsection \ref{Proof of Theorem thm: Consistency of Empirical Bayesian estimator}. We can understand the theorem intuitively by using the established results above. Recall there are two terms in the loss function: (1) the norm term $\|u(\cdot,t,q)\|_t^2$; (2) the log det term. For the norm term, from Proposition \ref{prop: |u(x,t,q)|^2 bound} and Lemma \ref{lemma: uniform convergence of series}, its behavior for $q \to \infty$ is roughly
    \begin{itemize}
        \item Growing like $2^{q(2t-2s+d)}$ if $t>s-d/2$;
        \item Growing like $q$ if $t=s-d/2$;
        \item Remaining bounded if $t < s- d/2$.
    \end{itemize}
    The log det term  decreases like $-(2t-d)q2^{qd}$ according to Proposition \ref{prop: bound on the log term}. Noticing that the EB loss function has the form
    \[\mathsf{L}^{\text{EB}}(t,q)=\|u(\cdot,t,q)\|_t^2 + \log \det K(t,q)\, , \]
    we arrive at the following intuitive observations:
    \begin{itemize}
        \item When $t < s$, the dominant behavior of $\mathsf{L}^{\text{EB}}(t,q)$ is controlled by the log determinant term, since the growth rate of the norm term $2^{q(2t-2s+d)} = o(q2^{qd})$. As a consequence, $\mathsf{L}^{\text{EB}}(t,q)$ exhibits the overall behavior $-(2t-d)q2^{qd}$. Therefore, the loss function decreases linearly with $t$  in this regime. This is consistent
        with what is observed in  Figure~\ref{fig: 1 d example}.
        \item When $t \geq s$, the increasing speed of the norm term beats the decreasing rate of the log det term, so the norm term dominates the behavior of $\mathsf{L}^{\text{EB}}(t,q)$. Overall, it is like  $2^{q(2t-2s+d)}$, which increases exponentially with $t$; again
        this is consistent with what is observed in Figure~\ref{fig: 1 d example}.
    \end{itemize}
    According to the above observations, the minimizer of $\mathsf{L}^{\text{EB}}(t,q)$ will converge to $s$. To make the intuition leading to
    this conclusion rigorous, we need to use techniques of uniform convergence for random series. For details we refer to Subsection \ref{Proof of Theorem thm: Consistency of Empirical Bayesian estimator}.
    
    \subsection{Proof for the Kernel Flow Estimator} 
    \label{sec: consistency KF estimator}
    In this subsection, we establish the consistency of the KF estimator. As before, we start by estimating the growth behavior of terms that appear in the loss function. We begin with the interaction term $\|u(\cdot,t,q)-u(\cdot,t,q-1)\|_t^2$. Similar to the analysis of the norm term in the preceding subsection, we represent it by using Fourier series.
    \begin{proposition}
        The $\dot{H}^t(\bT^d)$ norm of $u(\cdot,t,q)-u(\cdot,t,q-1)$ has the representation
        \begin{equation}
        \label{eqn: uq-uq-1 representation}
        \|u(\cdot,t,q)-u(\cdot,t,q-1)\|_t^2=(4\pi^2)^t\sum_{m \in B_q^d} M_q^t(m)\left(\frac{T_q\hat{u}(m)}{M_q^t(m)}-\frac{T_{q-1}\hat{u}(m)}{M_{q-1}^t(m)}\right)^2\, .
        \end{equation}
    \end{proposition}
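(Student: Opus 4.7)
The plan is to mimic exactly the derivation used in Proposition 2.5 for $\|u(\cdot,t,q)\|_t^2$, but with the difference of two GPR posterior means substituted in place of a single one. The key ingredients are Theorem 2.2 (Fourier representation of the GPR solution), Parseval's identity for the $\dot{H}^t(\bT^d)$ norm, and a regrouping of the sum over $\bZ^d\setminus\{0\}$ into a sum over the fundamental box $B_q^d$ that exploits $2^q$-periodicity.

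First I would use Theorem~\ref{thm: Fourier representation for Pu} to write, for $m\in\bZ^d\setminus\{0\}$,
\[
\hat{u}(m,t,q)-\hat{u}(m,t,q-1)=|m|^{-2t}\left(\frac{T_q\hat{u}(m)}{M_q^t(m)}-\frac{T_{q-1}\hat{u}(m)}{M_{q-1}^t(m)}\right),
\]
with both sides vanishing at $m=0$. Denote the bracket by $\phi(m)$. Since $T_q\hat{u}$ and $M_q^t$ are $2^q$-periodic on $\bZ^d$, and since every $2^{q-1}$-periodic function is also $2^q$-periodic, the function $\phi:\bZ^d\to\bR$ is itself $2^q$-periodic.

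Next I would apply Parseval to obtain
\[
\|u(\cdot,t,q)-u(\cdot,t,q-1)\|_t^2
=(4\pi^2)^t\sum_{m\in\bZ^d\setminus\{0\}}|m|^{2t}\,|\hat{u}(m,t,q)-\hat{u}(m,t,q-1)|^2
=(4\pi^2)^t\sum_{m\in\bZ^d\setminus\{0\}}|m|^{-2t}\,\phi(m)^2.
\]
Then I would regroup the series by residue class modulo $2^q$: each $m\in\bZ^d\setminus\{0\}$ can be written uniquely as $m=m_0+2^q\beta$ with $m_0\in B_q^d$ and $\beta\in\bZ^d$, with the single caveat that when $m_0=0$ the index $\beta=0$ is excluded. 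Pulling the $2^q$-periodic factor $\phi(m)^2$ outside gives
\[
\sum_{m\in\bZ^d\setminus\{0\}}|m|^{-2t}\phi(m)^2
=\sum_{m_0\in B_q^d}\phi(m_0)^2\cdot\!\!\sum_{\beta\in\bZ^d,\,(m_0,\beta)\ne(0,0)}\!\!|m_0+2^q\beta|^{-2t}
=\sum_{m_0\in B_q^d}M_q^t(m_0)\,\phi(m_0)^2,
\]
where the last identity is precisely Definition~\ref{def: periodization} of $M_q^t$, with the two cases $m_0=0$ and $m_0\ne 0$ matching exactly the piecewise definition in \eqref{eqn: def of M_q^t(m)}. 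Substituting back yields \eqref{eqn: uq-uq-1 representation}.

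The only real subtlety I anticipate is the bookkeeping at $m_0=0$: one must verify that excluding $\beta=0$ in that residue class gives the correct factor $M_q^t(0)=\sum_{\beta\ne 0}|2^q\beta|^{-2t}$, and that $\phi(0)$ is well-defined despite $\hat{u}(0)=0$ (it is, because $T_q\hat{u}(0)=\sum_{\beta\ne 0}\hat{u}(2^q\beta)$ and $M_q^t(0)$ is finite for $t>d/2$). Once this boundary case is treated with care, the rest is a direct repetition of the argument used in the proof of Proposition~\ref{prop: representation of H^t norm of u(,t,q)}.
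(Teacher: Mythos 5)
Your proposal is correct and follows essentially the same route as the paper's proof: apply Theorem~\ref{thm: Fourier representation for Pu} to the difference of Fourier coefficients, use Parseval for the $\dot{H}^t$ norm, and fold the sum over $\bZ^d\setminus\{0\}$ into $B_q^d$ via the $2^q$-periodicity of the bracket, producing the factor $M_q^t(m)$. You are in fact more explicit than the paper about why the bracket is $2^q$-periodic (the $q-1$ terms being $2^{q-1}$-periodic, hence $2^q$-periodic) and about the bookkeeping in the residue class $m_0=0$; both points are handled correctly.
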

    \begin{proof}
        By Theorem \ref{thm: Fourier representation for Pu}, we have
        \[\hat{u}(m,t,q)-\hat{u}(m,t,q-1)=\begin{cases}
        0, & \text{if} \ m = 0\\
        |m|^{-2t}\left(\frac{T_q\hat{u}(m)}{M_q^t(m)}-\frac{T_{q-1}\hat{u}(m)}{M_{q-1}^t(m)}\right)
        , & \text{else} \, .
        \end{cases} \]
        Thus, 
        \begin{align*}
        \|u(\cdot,t,q)-u(\cdot,t,q-1)\|_t^2
        =&(4\pi^2)^t\sum_{m \in \bZ^d \backslash \{0\}} |m|^{2t}|\hat{u}(m,t,q)-\hat{u}(m,t,q-1)|^2\\
        =&(4\pi^2)^t\sum_{m \in \bZ^d \backslash \{0\}} |m|^{-2t}\left(\frac{T_q\hat{u}(m)}{M_q^t(m)}-\frac{T_{q-1}\hat{u}(m)}{M_{q-1}^t(m)}\right)^2\\
        =&(4\pi^2)^t\sum_{m \in B_q^d} M_q^t(m)\left(\frac{T_q\hat{u}(m)}{M_q^t(m)}-\frac{T_{q-1}\hat{u}(m)}{M_{q-1}^t(m)}\right)^2\, .
        \end{align*}
    \end{proof}    
    By carefully studying the correlation between the random variables appearing in the preceding proposition, we obtain lower and upper bounds
    in the following two propositions;  
    proofs can be found in Subsections \ref{Proof of Proposition prop: Lower bound on the interaction term} and \ref{Proof of Proposition prop: Upper bound on the interaction term}.
    
    \begin{proposition}[Lower bound on the interaction term]
        \label{prop: Lower bound on the interaction term}
           Suppose $\ud$ is a sample drawn from the Gaussian process $\cN(0,(-\Delta)^{-s})$ for $d/2+\delta \leq s \leq 1/\delta$, then
        \[\|u(\cdot,t,q)-u(\cdot,t,q-1)\|_t^2 \gtrsim \sum_{m \in B_{q-1}^d \backslash \{0\}}2^{-2tq}|m|^{4t-2s}\xi_{m}^2 \, , \]
        where $\{\xi_{m} \}_{m \in B_{q-1}^d\backslash \{0\}}$ are independent unit scalar Gaussian random variables.
    \end{proposition}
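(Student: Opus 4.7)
The plan is to reorganize the sum in the representation \eqref{eqn: uq-uq-1 representation} via a coset decomposition of $B_q^d$, extract a clean weighted-variance identity on each coset, and then lower bound each coset's contribution by a single squared Gaussian of the correct scale.

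Every $m' \in B_q^d$ can be written uniquely, modulo $2^q$, as $m + 2^{q-1}r$ with $m \in B_{q-1}^d$ and $r \in \{0,1\}^d$. Since $T_q\hat{u}$ and $M_q^t$ are $2^q$-periodic while $T_{q-1}\hat{u}$ and $M_{q-1}^t$ are $2^{q-1}$-periodic, the summand of \eqref{eqn: uq-uq-1 representation} depends only on $u_r := T_q\hat{u}(m+2^{q-1}r)$, $v_r := M_q^t(m + 2^{q-1}r)$, together with $U := T_{q-1}\hat{u}(m) = \sum_r u_r$ and $V := M_{q-1}^t(m) = \sum_r v_r$. Expanding the square over each coset yields the algebraic identity
\[
\sum_{r \in \{0,1\}^d} v_r\left(\frac{u_r}{v_r} - \frac{U}{V}\right)^2 = \sum_r \frac{u_r^2}{v_r} - \frac{U^2}{V},
\]
a nonnegative weighted variance of $(u_r/v_r)_r$ with weights $(v_r)_r$.

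Next, I would fix any $r^* \in \{0,1\}^d\setminus\{0\}$ and retain only the $r=0$ and $r=r^*$ summands. For $m \in B_{q-1}^d \setminus \{0\}$, Lemma \ref{lemma: estimate of the M t q term} gives $v_0 \simeq |m|^{-2t}$, and (taking the representative of $m+2^{q-1}r^*$ in $B_q^d$, whose $\ell^\infty$-norm is $\simeq 2^{q-1}$) $v_{r^*} \simeq 2^{-2qt}$, so $v_{r^*} \leq v_0$. The elementary inequality $v_0 a^2 + v_{r^*} b^2 \geq v_{r^*}(a^2+b^2) \geq (v_{r^*}/2)(a-b)^2$, applied with $a = u_0/v_0 - U/V$ and $b = u_{r^*}/v_{r^*} - U/V$ so that $a-b = u_0/v_0 - u_{r^*}/v_{r^*}$, gives
\[
\sum_r v_r\left(\frac{u_r}{v_r} - \frac{U}{V}\right)^2 \gtrsim \frac{(u_0 v_{r^*} - u_{r^*} v_0)^2}{v_0^2\, v_{r^*}}.
\]
Now $u_0$ and $u_{r^*}$ are sums of $\hat{u}$ over the disjoint cosets $m + 2^q\bZ^d$ and $m + 2^{q-1}r^* + 2^q\bZ^d$, so they are independent centered Gaussians with $\mathrm{Var}(u_0) \simeq |m|^{-2s}$ and $\mathrm{Var}(u_{r^*}) \simeq 2^{-2qs}$ (by Lemma \ref{lemma: estimate of the M t q term} applied to $M_q^s$). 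Writing $u_0 v_{r^*} - u_{r^*} v_0 = \Sigma_m \xi_m$ for a standard normal $\xi_m$, a short computation yields
\[
\frac{\Sigma_m^2}{v_0^2\, v_{r^*}} = \frac{v_{r^*}^2\,\mathrm{Var}(u_0) + v_0^2\,\mathrm{Var}(u_{r^*})}{v_0^2\, v_{r^*}} \simeq 2^{-2qt}|m|^{4t-2s} + 2^{2q(t-s)} \gtrsim 2^{-2qt}|m|^{4t-2s}.
\]

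Independence of the $\xi_m$ across distinct $m \in B_{q-1}^d\setminus\{0\}$ follows because the four index sets used to construct $(u_0, u_{r^*})$ for $m$ and $(u_0, u_{r^*})$ for any other $m'$ are pairwise disjoint subsets of $\bZ^d \setminus \{0\}$ (as cosets of $2^q\bZ^d$ with distinct representatives in $B_q^d$). Summing the coset-wise lower bound over $m \in B_{q-1}^d\setminus\{0\}$ and absorbing the uniformly bounded factor $(4\pi^2)^t$ into $\gtrsim$ gives the desired inequality. The main obstacle is the coset bookkeeping modulo $2^q$: one must verify that the mod-$2^q$ representatives of $m + 2^{q-1}r^*$ have the right magnitude so that Lemma \ref{lemma: estimate of the M t q term} applies, and one must check the pairwise disjointness of the constructed cosets to secure independence across $m$. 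Beyond that, everything reduces to one algebraic identity, one elementary pointwise inequality, and the estimates in Lemma \ref{lemma: estimate of the M t q term}.
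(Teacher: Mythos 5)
Your proposal is correct, and it shares the paper's coset geometry — partition the frequencies mod $2^q$ into the $2^d$ translates $m+2^{q-1}r$, $r\in\{0,1\}^d$, of a base point $m\in B_{q-1}^d$, and use Lemma \ref{lemma: estimate of the M t q term} to size $M_q^t$ at the low- and high-frequency representatives — but the reduction to one independent Gaussian per base point is carried out differently. The paper evaluates the residual $T_q\hat{u}/M_q^t-T_{q-1}\hat{u}/M_{q-1}^t$ at the single shifted point $m+2^{q-1}e_1$, computes its full variance $A_{e_1,m}$ (a sum over all $2^d$ aliases), discards all but one term of that variance, and must restrict to one shift $k=e_1$ because the residuals for different $k$ are correlated. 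You instead sum the whole coset first, which collapses to the weighted-variance identity $\sum_r u_r^2/v_r-U^2/V\ge 0$, and only then keep the two aliases $r=0,r^*$; the elementary inequality produces the single Gaussian $u_0v_{r^*}-u_{r^*}v_0$, whose variance and cross-$m$ independence are immediate. Both routes yield the same leading term $v_{r^*}\,\mathrm{Var}(u_0)/v_0^2\simeq 2^{-2tq}|m|^{4t-2s}$. Your identity has the advantage of making the nonnegativity of each coset's contribution manifest and of avoiding the bookkeeping of the $2^d$-term variance $A_{k,m}$, at the modest cost of the two-alias comparison step. One small point of hygiene: the assertion ``$v_{r^*}\le v_0$'' should be read as $v_{r^*}\lesssim v_0$, which is what Lemma \ref{lemma: estimate of the M t q term} actually provides and is all your inequality $v_0a^2+v_{r^*}b^2\gtrsim v_{r^*}(a-b)^2$ needs, since the conclusion is only claimed up to constants.
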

    The upper bound has a more complex form. We introduce the notation $\bZ^d_2=\{0,1\}^d$ comprising $d$ dimensional vectors with each component being in $\{0,1\}$. In the following proposition, we also use the convention that $|m|^{\alpha} = 0$ for $m=0$ and any $\alpha \in \bR$ to make the notation more compact.
    \begin{proposition}[Upper bound on the interaction term]
        \label{prop: Upper bound on the interaction term}
        Suppose $\ud$ is a sample drawn from the Gaussian process $\cN(0,(-\Delta)^{-s})$ for $d/2+\delta \leq s \leq 1/\delta$, then
        \[\|u(\cdot,t,q)-u(\cdot,t,q-1)\|_t^2 \lesssim \sum_{k \in \bZ_2^d} \sum_{m \in B_{q-1}^d} (2^{-q(2s-2t)}+2^{-2tq}|m|^{4t-2s})\xi_{k,m}^2 \, ,\]
        where for a fixed $k \in \bZ^d_2$, $\{\xi_{k,m} \}_{m \in B_{q-1}^d}$ are independent unit scalar Gaussian random variables.
    \end{proposition}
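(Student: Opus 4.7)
The plan is to reorganize the Fourier sum in \eqref{eqn: uq-uq-1 representation} according to cosets modulo $2^{q-1}$ and then exploit a Lagrange-type cancellation identity to obtain a sharp upper bound. For each $m' \in B_{q-1}^d$, the set of points in $B_q^d$ congruent to $m'$ modulo $2^{q-1}$ has exactly $2^d$ elements, which I index as $\{m_k\}_{k \in \bZ_2^d}$ with $m_0 = m'$. Writing $\eta_k := T_q\hat{u}(m_k)$, $R_k := M_q^t(m_k)$, and $S_{m'} := M_{q-1}^t(m')$, the periodization structure yields two key identities: $T_{q-1}\hat{u}(m') = \sum_k \eta_k$ (by splitting $\beta \in \bZ^d$ as $\beta = 2\gamma + \delta$ with $\delta \in \{0,1\}^d$) and $S_{m'} = \sum_k R_k$ (by the same splitting of the lattice sum defining $M_q^t$). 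Substituting these into \eqref{eqn: uq-uq-1 representation} and regrouping, the contribution of each $m'$ becomes $\sum_k \eta_k^2 / R_k - (\sum_k \eta_k)^2 / S_{m'}$.

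The Lagrange identity with $a_k = \eta_k/\sqrt{R_k}$ and $b_k = \sqrt{R_k}$ gives
\begin{equation*}
\sum_k \frac{\eta_k^2}{R_k} - \frac{(\sum_k \eta_k)^2}{S_{m'}} = \frac{1}{S_{m'}}\sum_{j<k}\frac{(R_k\eta_j - R_j\eta_k)^2}{R_j R_k}.
\end{equation*}
The inequality $(R_k\eta_j - R_j\eta_k)^2 \leq 2R_k^2 \eta_j^2 + 2R_j^2 \eta_k^2$, followed by reversing the order of summation and using $\sum_{j \neq k} R_j = S_{m'} - R_k$, bounds the right-hand side by $2\sum_k \eta_k^2 (S_{m'} - R_k) / (R_k S_{m'})$. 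Writing $\eta_k = \sqrt{(4\pi^2)^{-s}M_q^s(m_k)}\,\xi_{k,m'}$ with $\xi_{k,m'}$ a standard Gaussian, independence across $m' \in B_{q-1}^d$ for fixed $k$ follows because $m_k(m') \equiv m' \pmod{2^{q-1}}$, so distinct $m'$'s produce disjoint cosets $m_k(m') + 2^q\bZ^d$ in Fourier space, on which $\hat{u}$ is independent.

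It remains to estimate the deterministic coefficient $C_{k,m'} := 2(4\pi^2)^{t-s}\,M_q^s(m_k)(S_{m'}-R_k)/(R_k S_{m'})$ using Lemma \ref{lemma: estimate of the M t q term}. When $k=0$ and $m'\neq 0$, one has $R_0 \simeq S_{m'} \simeq |m'|^{-2t}$ and $M_q^s(m') \simeq |m'|^{-2s}$, while $S_{m'} - R_0 = \sum_{k\neq 0} R_k \simeq 2^{-2tq}$, giving $C_{0,m'} \simeq |m'|^{4t-2s}\,2^{-2tq}$. When $k\neq 0$, or when $m'=0$, one has $R_k \simeq 2^{-2tq}$, $M_q^s(m_k) \simeq 2^{-2sq}$, and $S_{m'} - R_k \simeq S_{m'}$, giving $C_{k,m'} \simeq 2^{-q(2s-2t)}$. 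In all cases $C_{k,m'} \lesssim 2^{-q(2s-2t)} + 2^{-2tq}|m'|^{4t-2s}$ with the convention $|0|^\alpha = 0$, yielding the claimed bound after summing over $m'$.

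The main obstacle lies in the regime $k=0$, $|m'|$ small, and $t<s$: the trivial estimate $\|P_{w^\perp}v\|^2 \leq \|v\|^2$ (i.e.\ keeping only $\sum_k \eta_k^2/R_k$ and dropping the subtracted projection) yields a coefficient $M_q^s(m')/M_q^t(m') \simeq |m'|^{2t-2s}$, which diverges as $|m'|\to 0$ and is \emph{not} dominated by the target bound. The Lagrange identity converts $|m'|^{2t-2s}$ into $|m'|^{4t-2s}\,2^{-2tq}$ via the extra small factor $(S_{m'}-R_0)/S_{m'} \simeq 2^{-2tq}|m'|^{2t}$; correctly extracting this cancellation (rather than naively dropping the projection) is the crux of the argument, with the remaining steps being case-by-case applications of Lemma \ref{lemma: estimate of the M t q term}.
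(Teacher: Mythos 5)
Your proof is correct, and it reaches the bound by a genuinely different algebraic route than the paper's. The paper also reindexes $B_q^d$ by the $2^d$ shifted copies of $B_{q-1}^d$, but then treats each summand $M_q^t(m)\bigl(T_q\hat u(m)/M_q^t(m)-T_{q-1}\hat u(m)/M_{q-1}^t(m)\bigr)^2$ as a \emph{single} squared Gaussian: it expands the difference as a linear combination of the independent variables $T_q\hat u(m+2^{q-1}l)$, $l\in\bZ_2^d$, computes its variance $A_{k,m}$ exactly, and bounds that coefficient case by case, with the key cancellation entering through the factor $(M_q^t(m)-M_{q-1}^t(m))^2\simeq 2^{-4tq}$ when $k=0$. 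You instead keep the $2^d$ independent Gaussians $\eta_k=T_q\hat u(m'+2^{q-1}k)$ of each coset separate, identify the coset contribution as the Pythagorean defect $\sum_k\eta_k^2/R_k-(\sum_k\eta_k)^2/S_{m'}$, and use the Lagrange identity plus AM--GM to redistribute it onto the $\eta_k^2$ with coefficients $2(S_{m'}-R_k)/(R_kS_{m'})$; your cancellation enters through $S_{m'}-R_0\simeq 2^{-2tq}$, which by the periodization identity $S_{m'}=\sum_kR_k$ is the same quantity $M_{q-1}^t(m')-M_q^t(m')$ that appears (squared) in the paper's $A_{0,m}$. Both routes then reduce to the same case analysis via Lemma \ref{lemma: estimate of the M t q term}, and both produce random variables with the required independence for fixed $k$ (yours are the normalized $T_q\hat u(m'+2^{q-1}k)$ themselves; the paper's are the normalized differences, independent across $m$ in each shifted box because distinct $m$ give disjoint residue classes modulo $2^{q-1}$). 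What your version buys is that nonnegativity of the defect is manifest as a sum of squares and the failure of the naive bound $M_q^s(m')/M_q^t(m')\simeq|m'|^{2t-2s}$ is made explicit; what the paper's buys is an exact variance identity for each term rather than an inequality, which it then reuses to extract the matching lower bound in Proposition \ref{prop: Lower bound on the interaction term}. Your identification of the $k=0$, small $|m'|$, $t<s$ regime as the place where the cancellation is indispensable is exactly right.
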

    We remark that in the upper bound, the random variables for different $k$ may exhibit correlation. However, since the term $\sum_{m \in B_{q-1}^d} (2^{-q(2s-2t)}+2^{-2tq}|m|^{4t-2s})\xi_{k,m}^2$ has the same form for each $k$, and the number of different $k$ is finite, it suffices to analyze the random series for a single $k$, in which we have the independence of random variables. The theorem is stated below.
    \begin{theorem}[Consistency of the Kernel Flow estimator]
    \label{thm: Consistency of the Kernel Flow estimator}
    Fix $\delta >0$. Suppose $\ud$ is a sample drawn from the Gaussian process $\cN(0,(-\Delta)^{-s})$. If
         $\frac{s-d/2}{2} \in [d/2+\delta,1/\delta]$ then for the Kernel Flow estimator, 
    \[\lim_{q \to \infty} s^{\mathrm{KF}}(q)=\frac{s-d/2}{2}\quad \mathrm{in\ probability}\, . \]
    \end{theorem}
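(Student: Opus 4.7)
The plan is to mirror the structure of the consistency proof for the Empirical Bayesian estimator (Theorem~\ref{thm: Consistency of Empirical Bayesian estimator}): first I sandwich $\mathsf{L}^{\mathrm{KF}}(t,q)$ between two ratios of explicit random series, then I pass to a logarithmic normalization that converges uniformly in $t$ to a continuous piecewise-linear limit $\Phi(t)$ whose unique minimizer is $t^\star := (s-d/2)/2$, and finally I invoke a standard argmin continuity argument.

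For the sandwich, I would combine Proposition~\ref{prop: |u(x,t,q)|^2 bound} (for the denominator $\|u(\cdot,t,q)\|_t^2$) with the upper and lower bounds of Propositions~\ref{prop: Upper bound on the interaction term} and~\ref{prop: Lower bound on the interaction term} (for the numerator). Each random series is then analyzed via Lemma~\ref{lemma: uniform convergence of series} with scaling parameter $r = 2t-2s+d$ for the denominator and $r = 4t-2s+d$ for the numerator; the case $r<0$ is handled by a direct tail estimate, since the weights $|m|^{r-d}$ are then absolutely summable and the series converges to a positive random variable with mean bounded above and below uniformly in $t$ on compact sub-intervals. Separating the deterministic $q$-scalings from random prefactors that are bounded in probability away from $0$ and $\infty$, this yields
\[
\mathsf{L}^{\mathrm{KF}}(t,q) \;\simeq\; \begin{cases} 2^{-2tq}, & t < t^\star, \\ 2^{q(2t-2s+d)}, & t^\star < t < s-d/2, \\ 1, & t > s-d/2, \end{cases}
\]
with an extra polynomial $q$-factor picked up exactly at the transitions $t = t^\star$ and $t = s-d/2$ (from the $r=0$ branch of Lemma~\ref{lemma: uniform convergence of series}). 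Taking $\frac{1}{q}\log$, the random prefactors contribute $O(1/q)$ uniformly and the transitional $q$-factors contribute $O(\log q / q)$, producing the continuous piecewise-linear limit
\[
\Phi(t) \;=\; \begin{cases} -2t \log 2, & d/2 + \delta \leq t \leq t^\star, \\ (2t - 2s + d)\log 2, & t^\star \leq t \leq s - d/2, \\ 0, & s - d/2 \leq t \leq 1/\delta. \end{cases}
\]
Continuity is immediate from $-2t^\star \log 2 = (d/2 - s)\log 2 = (2t^\star - 2s + d)\log 2$ and $(2(s - d/2) - 2s + d)\log 2 = 0$. Since $\Phi$ is strictly decreasing on $[d/2 + \delta, t^\star]$ with slope $-2\log 2$, strictly increasing on $[t^\star, s - d/2]$ with slope $2\log 2$, and constant on $[s - d/2, 1/\delta]$, it attains its unique global minimum at $t^\star$ with value $-(s - d/2)\log 2 < 0$.

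The main obstacle, and the step I would spend most effort on, is upgrading these pointwise asymptotics to uniform-in-$t$ convergence of $\frac{1}{q}\log \mathsf{L}^{\mathrm{KF}}(t,q)$ to $\Phi(t)$ on the full compact set $[d/2 + \delta, 1/\delta]$. The uniform-in-$r$ statement of Lemma~\ref{lemma: uniform convergence of series} is tailored precisely for this, since the exponents $2t - 2s + d$ and $4t - 2s + d$ depend continuously on $t$; the fact that numerator and denominator share the same realization of $u^\dagger$ causes no difficulty because the lemma controls the underlying Fourier-side random sums simultaneously across $t$. Extra care is needed on shrinking neighborhoods of the two transition points, where a separate concentration estimate on weighted chi-squared sums (analogous to the one the EB proof relies on) is needed to bridge the switch between the $r \neq 0$ and $r = 0$ branches of Lemma~\ref{lemma: uniform convergence of series}. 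Once uniform convergence is secured, the usual argmin continuity argument closes the proof: for any $\eta > 0$, uniform convergence combined with $\inf_{|t - t^\star| \geq \eta} \Phi(t) > \Phi(t^\star)$ yields $\bP\bigl(|s^{\mathrm{KF}}(q) - t^\star| \geq \eta\bigr) \to 0$, as required.
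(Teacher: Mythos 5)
Your proposal is correct and follows essentially the same route as the paper: the paper likewise sandwiches the numerator via Propositions~\ref{prop: Lower bound on the interaction term} and~\ref{prop: Upper bound on the interaction term} and the denominator via Proposition~\ref{prop: |u(x,t,q)|^2 bound}, feeds the resulting weighted chi-squared series into Lemma~\ref{lemma: uniform convergence of series}, identifies the same three growth regimes $2^{-2tq}$, $2^{q(2t-2s+d)}$, $O(1)$, and then compares the loss at $t^\star=(s-d/2)/2$ with its infimum over $\{|t-t^\star|\geq\eta\}$ (via the ratio $r(t,q)$ rather than via $\tfrac1q\log$, a purely organizational difference). One caveat: the paper deliberately does \emph{not} prove uniform convergence of $\tfrac1q\log \mathsf{L}^{\mathrm{KF}}(t,q)$ across the transition at $t^\star$ — where the limiting constants $\gamma(4t-2s+d)$ from Lemma~\ref{lemma: uniform convergence of series} degenerate as $4t-2s+d\downarrow 0$ — but only a pointwise upper bound at $t=t^\star$ together with uniform lower bounds on $[d/2+\delta,t^\star-\eta]$ and $[t^\star+\eta,1/\delta]$ (handling the second transition at $t=s-d/2$ by the monotonicity trick you allude to); since your closing display only uses exactly this weaker statement, you should state that as the goal rather than full-interval uniform convergence.
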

    The idea behind the proof of the theorem is to combine Propositions \ref{prop: Lower bound on the interaction term}, \ref{prop: Upper bound on the interaction term} and Lemma \ref{lemma: uniform convergence of series}. Together they imply the growth behavior of the loss function \[\mathsf{L}^{\text{KF}}(t,q)= \frac{\|u(\cdot,t,q)-u(\cdot,t,q-1)\|_t^2}{\|u(\cdot,t,q)\|_t^2}\]
    as follows:
    \begin{itemize}
        \item When $t < \frac{s-d/2}{2}$, the numerator decays like $2^{-2tq}$ since $4t-2s<-d$, in which case the summation $\sum_{m \in B_q^d \backslash \{0\}} |m|^{4t-2s}\xi_m^2$ remains bounded. The denominator remains bounded. So the overall behavior is $2^{-2tq}$.
        \item When $\frac{s-d/2}{2}<t<s-d/2$, the numerator decays like $2^{-2tq}\times 2^{q(4t-2s+d)}=2^{q(2t-2s+d)}$ according to Lemma \ref{lemma: uniform convergence of series}. The denominator remains bounded, The overall behavior is $2^{q(2t-2s+d)}$. 
        \item When $t>s-d/2$, the numerator behaves like $2^{q(2t-2s+d)}$, while the denominator behaves like $2^{q(2t-2s+d)}$. The overall behavior is of order $1$.
    \end{itemize}
    These observations are consistent with what is observed in Figure~\ref{fig: 1 d example}. Based on them we deduce that the minimizer converges to $\frac{s-d/2}{2}$. The loss function exhibits symmetric behavior with respect to $\frac{s-d/2}{2}$ for $t \in (d/2,s-d)$. The detailed rigorous treatment is presented in Subsection \ref{Proof of Theorem thm: Consistency of the Kernel Flow estimator}.
    {\subsection{Discussions} 
    \label{sec:regularity recovery, discussion} In the preceding three subsections, we have presented the consistency theory, its implication for implicit bias, as well as the tools and strategies underlying our proofs. This subsection adds to several discussions on
    the theory and proofs.}
    
    {First, our theory applies to the torus domain. One may wonder whether these techniques can be applied to boundary conditions beyond the periodic ones.
    The main tool used in the proofs is Fourier's series (based on the eigenfunctions of the Laplacian operator). These are used to characterize the norm term and determinant term. We expect these techniques to generalize to other problems,
    such as the box with Dirichlet or Neumann boundary conditions in which
    the Fourier sine or cosine series are natural; the detailed analysis is left as future work. However, we need to point out that the limitation of this proof idea is that it requires a clear analytic understanding of the spectral properties of the kernel operator, i.e., its eigenfunctions. In Subsection \ref{subsubsec: Recovery of regularity parameter for variable coefficient elliptic operator}, we present numerical experiments beyond this setting, which involves more challenging Laplacians with discontinuous coefficients that can model more complicated heterogeneous random fields.}
    
    {Second, this section considers the regularity parameter only. In spatial statistics literature, consistency results on this parameter (for general Mat\'ern type model) are very scarce and difficult. Here, we obtain a proof for the torus model, which is the main technical contribution of this paper. We will discuss the learning of other parameters in the next section, to make the story of the Mat\'ern-like model on the torus more complete.}
    
    {Finally, as we get two algorithms that can  ``consistently'' learn the information of the regularity parameter when the number of data is large, a natural question is when to choose which. To answer this question, we presents numerical study of the variances of both estimators for the Mat\'ern-like model in the next subsection.}
    
    \subsection{Variance of Regularity Parameter Estimation}
\label{sec: experiments, variance of estimators}
In this subsection, we compare the variance of the two estimators for recovering the regularity parameter $s$. We return to the experimental set-up in Subsection \ref{section: numerical example for s, s-d/2 /2 demonstration}. We form the EB and KF estimators for $50$ instances of different draws of the GP, normalized by the
limiting optimum values $s$ and $\frac{s-d/2}{2}$ respectively. The statistics of the two estimators are summarized in the histogram (see Figure \ref{fig: Histogram of estimator of s, for different draws}).
\begin{figure}[ht]
    \centering
    \includegraphics[width=6cm]{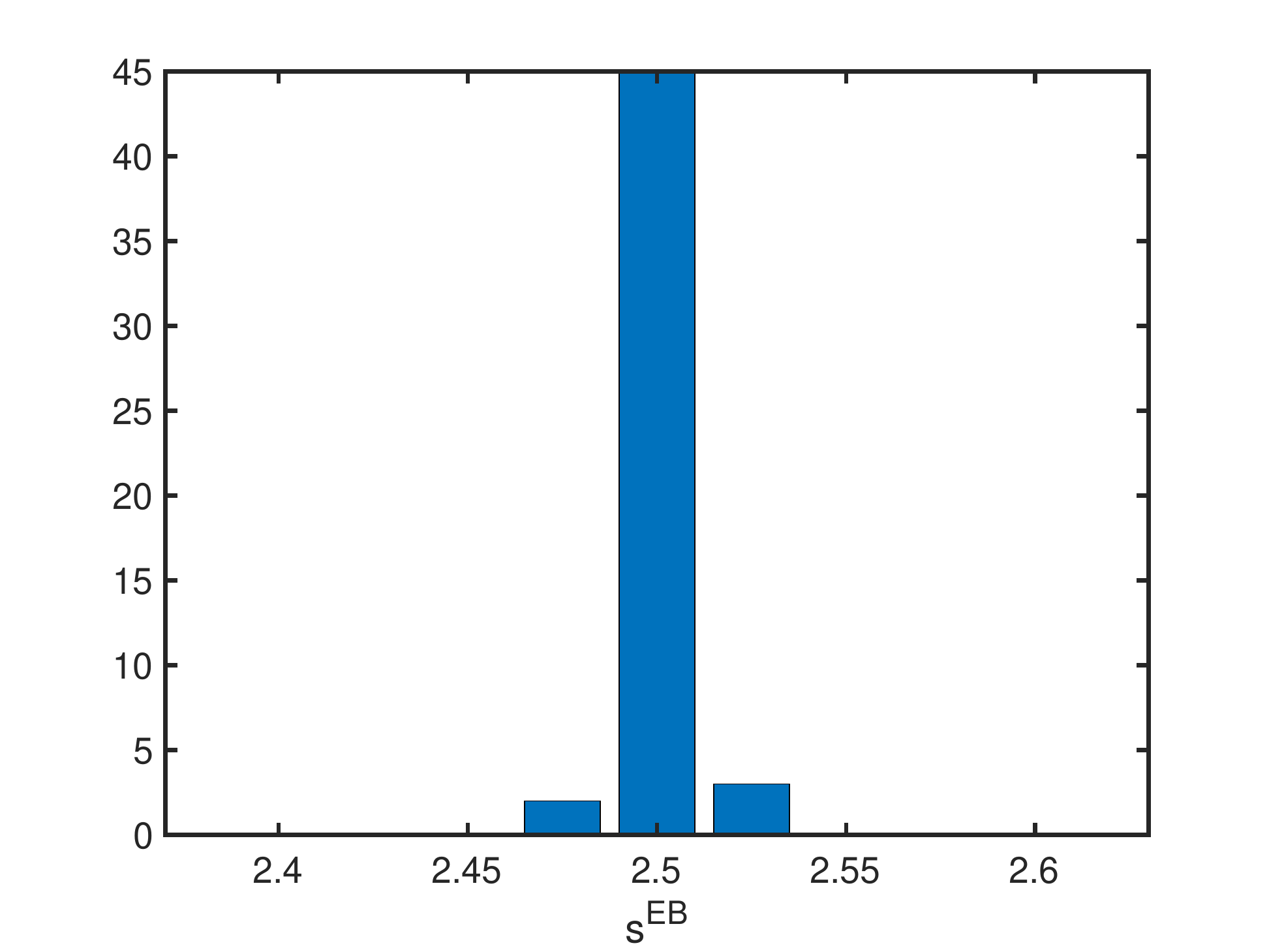}
    \includegraphics[width=6cm]{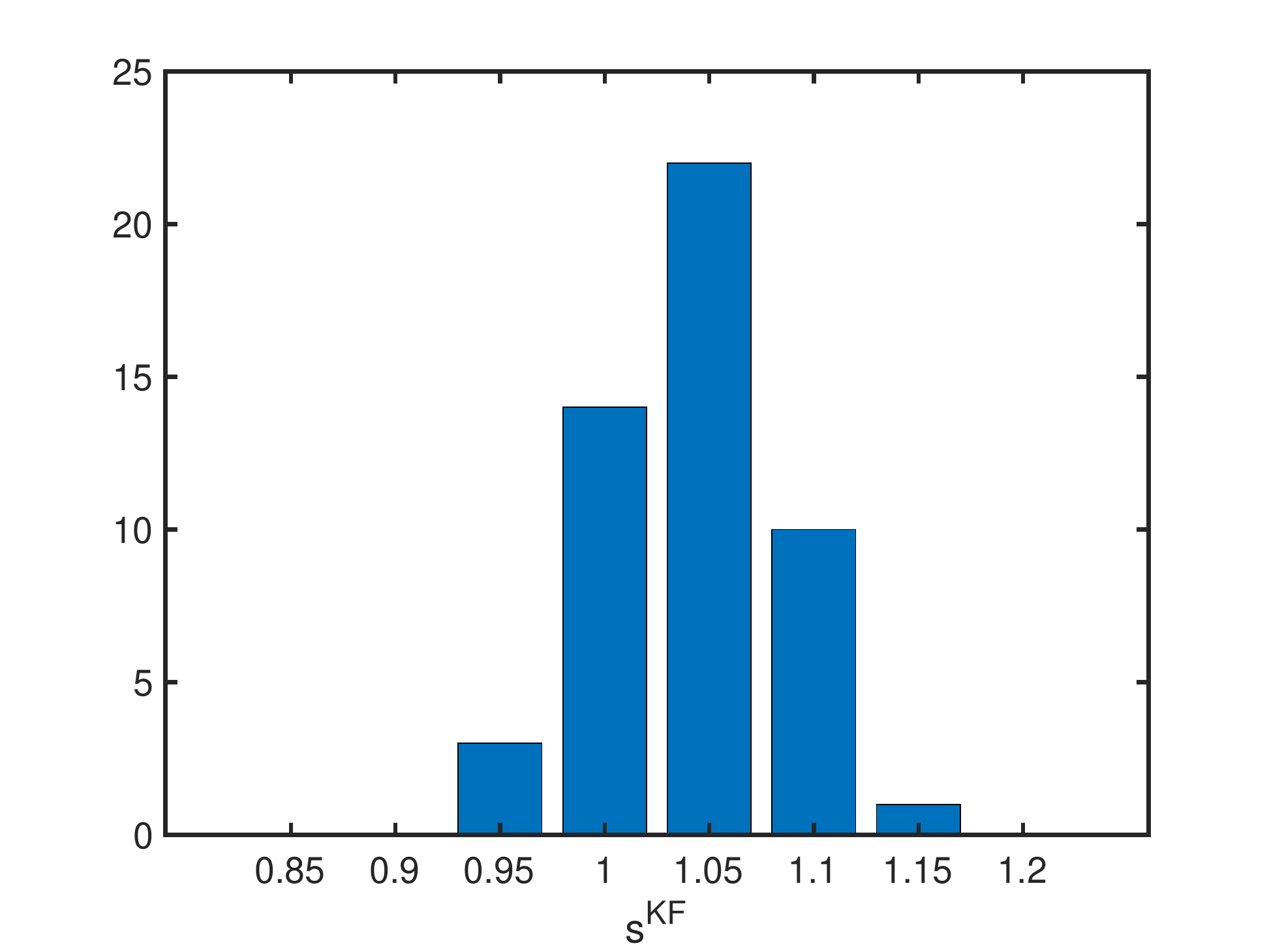}
    \caption{Histogram of the regularity estimators for the Mat\'ern-like process. Left: EB; right: KF}
    \label{fig: Histogram of estimator of s, for different draws}
    \end{figure} 
Clearly, EB exhibits smaller variance than KF. We compute the estimated variance using the $50$ instances. Finally we get 
\[\frac{\text{Var}(s^{\text{EB}})}{s^2} \approx 1.44\times 10^{-5} \quad \text{and}\quad \frac{\text{Var}(s^{\text{KF}})}{\left((s-d/2)/2\right)^2}\approx 3.6 \times 10^{-3}\, . \]
Since the variance of EB is smaller, if our target is to estimate $s$ for the exact GP model, then this suggests that the EB method is preferable.
    
    \section{More Well-specified Examples}

    \label{sec: numerical experiments}

{The setting in Section \ref{sec: recover regularity} concerns regularity parameter of the Mat\'ern-like model only. This section aims to extend this discussion to a wider range of settings by means of numerical experiments. 
First, we study the learning of lengthscale and amplitude parameters in the Mat\'ern-like model in Subsection \ref{sec: recovery of sigma and tau}; these experiments lead to a more complete story for the Mat\'ern-like model on the torus. Then, in Subsection \ref{sec: Other Well-specified Examples}, we consider other well-specified models, extending beyond the Mat\'ern-like process example. In Subsection \ref{sec: experiments, computational aspects}, we also discuss some computational aspects of the EB and KF approaches. }

\subsection{Recovery of Amplitude and Lengthscale}
\label{sec: recovery of sigma and tau}
We start with the learning of amplitude and lengthscale parameters in the Mat\'ern-like model, via either EB or KF method.

In spatial statistics, an important general principle in looking at the recovery of hyperparameters
via EB is to determine whether or not the family of measures are mutually singular with respect
to changes in the parameter to be estimated; learning parameters which give rise to mutually singular families
is usually easy, since different almost sure properties can often be
used to distinguish measures and this can be achieved without an abundance of data; in contrast
those parameters that do not
give rise to mutually singular measures typically require an abundance of realizations to be accurately learned. We illustrate this issue in the context of estimating
one parameter by EB, the changing of which leads to mutually singular
measures, and estimating two parameters by EB, changing one of which leads
to mutual singularity, and the other to equivalence, for the Mat\' ern-like process. We also study analogous
questions about identifiability for the KF method. In all cases
we work with loss functions that are natural generalizations of 
\eqref{eqn: def of estimators, recover regularity, EB},
\eqref{eqn: def of estimators, recover regularity, KF}.

\subsubsection{Recovery of $\sigma$} A first observation is that the KF loss function is invariant under change of $\sigma$, so it cannot recover this parameter. We also note that measures are mutually singular with respect to changes in $\sigma$, and so we do expect to be able to recover
$\sigma$ by EB.
For the EB estimator, we design the experiment as follows. We study whether the EB method can recover $\sigma$ while $s,\tau$ are fixed. In detail, we consider a problem with domain the one dimensional torus $\bT^1$. The Mat\'ern-like kernel has regularity $s=2.5$, amplitude $\sigma=1$ and lengthscale $\tau=0$. We assume the values of $s,\tau$ are known, but not $\sigma$. We want to recover $\sigma$ by seeing a single discretized realization $\ud \sim \cN(0,\sigma^2(-\Delta+\tau^2 I)^{-s})$. The domain $\bT^1$ is discretized into $N=2^{10}$ equidistributed grid points. The data we observe is the values of $\ud$ in $2^9$ equidistributed  points. We build the EB loss function (see equation \eqref{eqn: def of EB estimator of sigma}) and plot the figure for a single instance; see Figure \ref{fig: recover sigma}.
\begin{figure}[ht]
    \centering
    \includegraphics[width=9cm]{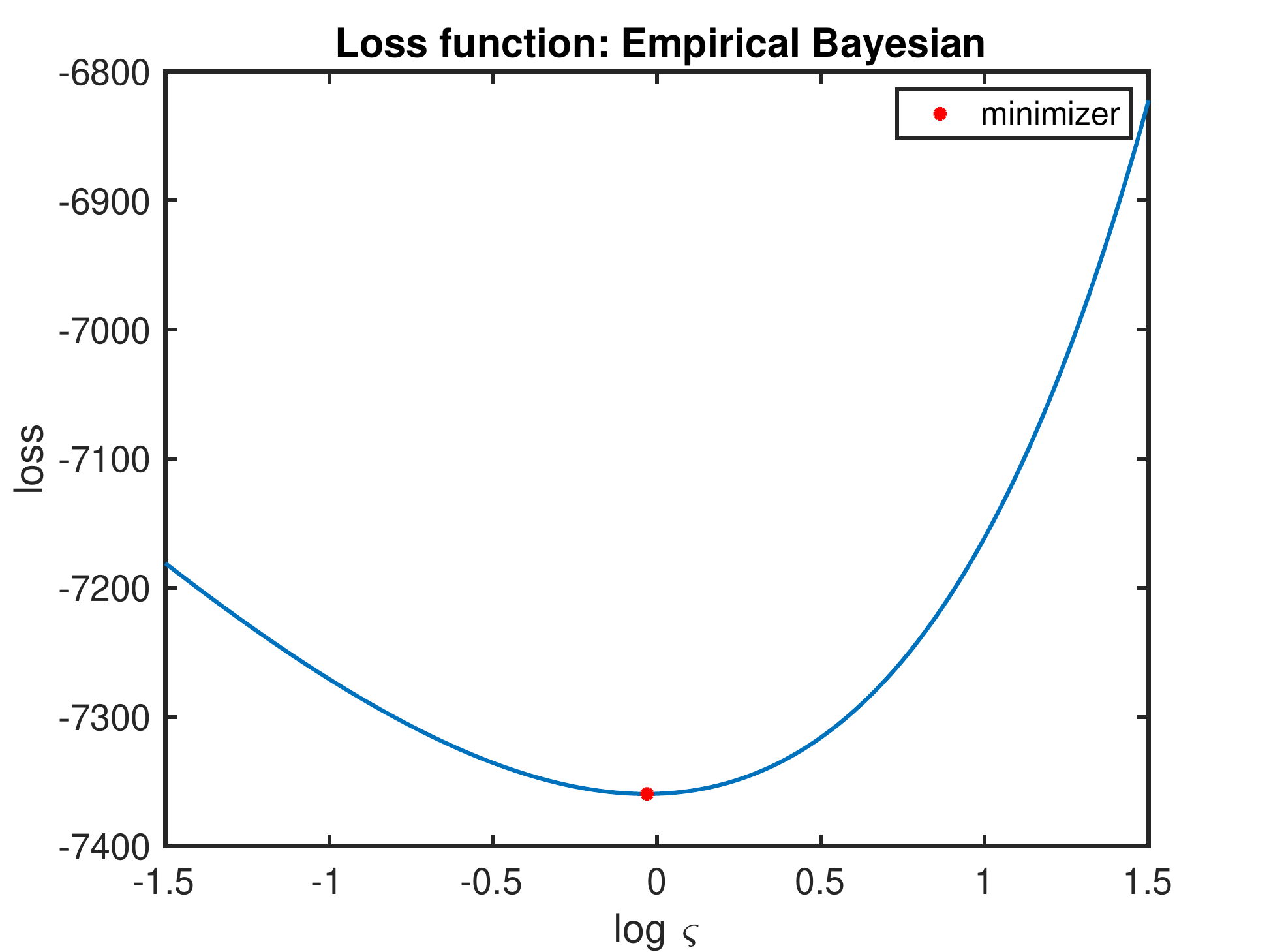}
    \caption{EB loss function for recovering $\sigma$}
    \label{fig: recover sigma}
    \end{figure} 
We introduce $\varsigma$ as the variable to be
maximized over to determine our estimate of $\sigma.$
In our experiments we work with the parameterization $\varsigma=\exp(\varsigma')$ in order to ensure that the estimated
$\sigma$ is positive. Hence, the $x$-axis of  Figure~\ref{fig: recover sigma} is $\varsigma'$.
The figure shows that the minimizer of the loss function is close to the
point $\varsigma'=0$ ($\varsigma=1$), so the estimator $\sigma^{\text{EB}}$ is close to the ground truth $\sigma$.

We can theoretically analyze the convergence. The same set-up in Subsection \ref{sec: set-up of recovering regularity} is adopted, except now we assume the function is drawn from $\cN(0,\sigma^2(-\Delta)^{-s})$ with $s$ known and we want to recover $\sigma$ by seeing the equidistributed spatial samples on the torus. After calculating the likelihood in such a case, we get the EB estimator below. Here we abuse the notation to write 
\begin{equation}
\label{eqn: def of EB estimator of sigma}
\begin{aligned}
    \sigma^{\text{EB}}(q,\ud)=&\argmin_{\varsigma>0} \mathsf{L}^{\text{EB}}(\varsigma,q,\ud),\\
    &\mathsf{L}^{\text{EB}}(\varsigma,q,\ud):=\frac{\sigma^2\|u(\cdot,s,q)\|_s^2}{\varsigma^2} + \log \det K(s,q) + 2^{qd}\log \varsigma^2\, .
    \end{aligned}
\end{equation}
The definition of $u(\cdot,s,q), K(s,q)$ is the same as in Subsection \ref{sec: set-up of recovering regularity}. Recall that $u(\cdot,s,q)$ is the mean of the GP found by conditioning
    a prior measure $\cN(0,(-\Delta)^{-s})$ on observations of $\ud$ at the observation data with level $q$. The definition of $\|\cdot\|_s$ also follows from Subsection \ref{sec: set-up of recovering regularity}. We abuse notation to write $\mathsf{L}^{\text{EB}}(\varsigma,q,\ud)$ for the EB loss function used in the estimation of $\sigma$; the reader should not confuse this with $\mathsf{L}^{\text{EB}}(t,q,\ud)$ in Subsection \ref{sec: set-up of recovering regularity} which is used for recovering the regularity parameter $s$. 

In this setting we have the following consistency result:
\begin{theorem}
Fix $\delta > 0$. Suppose $\ud$ is a sample drawn from the Gaussian process $\cN(0,\sigma^2(-\Delta)^{-s})$ for some $s \in [d/2+\delta,1/\delta]$. 
Then, for the Empirical Bayesian estimator of $\sigma$, it holds that
        \[\lim_{q \to \infty} \sigma^{\mathrm{EB}}(q,\ud)=\sigma\, , \]
where the convergence is in probability with respect to randomly chosen $\ud$.
\end{theorem}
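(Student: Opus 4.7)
The crucial feature that makes this consistency result far simpler than Theorem \ref{thm: Consistency of Empirical Bayesian estimator} is that $\varsigma$ enters the loss only through the two elementary scalar terms $1/\varsigma^2$ and $\log\varsigma^2$, so the optimization reduces to a one-variable calculus problem with an explicit minimizer. My plan is first to establish that $\mathsf{L}^{\mathrm{EB}}(\cdot,q,\ud)$, viewed as a function of $\eta := \varsigma^2 \in (0,\infty)$, has the form $a/\eta + 2^{qd}\log\eta + c$, with $a > 0$ depending on $\ud$ and $c$ collecting $\log\det K(s,q)$ together with any $\eta$-independent scalar. The function $\eta \mapsto a/\eta + 2^{qd}\log\eta$ is strictly convex on $(0,\infty)$, so the first-order condition yields a unique minimizer:
\begin{equation*}
\bigl(\sigma^{\mathrm{EB}}(q,\ud)\bigr)^2 \,=\, \frac{a}{2^{qd}} \,=\, \frac{\|u(\cdot,s,q)\|_s^2}{2^{qd}}\, ,
\end{equation*}
up to the precise normalization convention adopted in $\mathsf{L}^{\mathrm{EB}}$; in any case the numerator is the observable quadratic form $\ud(\cX_q)^\sfT K(s,q)^{-1} \ud(\cX_q)$ that appears via the representer theorem.

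Second, I would invoke Proposition \ref{prop: representation of H^t norm of u(,t,q)} at the matching value $t=s$. After rescaling $\ud = \sigma\xi$ with $\xi \sim \cN(0,(-\Delta)^{-s})$, the ratios $M_q^s(m)/M_q^s(m)$ collapse to $1$ identically, so the Fourier representation becomes the \emph{exact} equality
\begin{equation*}
\|u(\cdot,s,q)\|_s^2 \,=\, \sigma^2 \sum_{m \in B_q^d} \xi_m^2\, ,
\end{equation*}
with $\{\xi_m\}_{m \in B_q^d}$ i.i.d.\ standard normals. This is considerably cleaner than the corresponding step in Subsection \ref{sec: consistency EM estimator}, since no weighted sum appears and the sharp $\simeq$ estimates of Lemma \ref{lemma: estimate of the M t q term} are not needed.

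Third, the concentration step is a one-line application of the chi-squared law of large numbers: the average $2^{-qd}\sum_{m \in B_q^d} \xi_m^2$ has mean $1$ and variance $2\cdot 2^{-qd} \to 0$, so Chebyshev's inequality yields convergence to $1$ in probability. Combining with the closed form of the minimizer and the continuity of $x \mapsto \sqrt{x}$ on $(0,\infty)$ gives $\sigma^{\mathrm{EB}}(q,\ud) \to \sigma$ in probability.

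The hardest part is essentially conceptual bookkeeping: one must fix the normalization conventions in $\mathsf{L}^{\mathrm{EB}}$ so that the scalar prefactor in front of $\|u(\cdot,s,q)\|_s^2$ cancels correctly against the $\sigma^2$ appearing in the Fourier representation, thereby making the limit of the explicit minimizer exactly $\sigma^2$ rather than $\sigma^4$ or $1$. Once this is handled, none of the heavier machinery from Section \ref{sec: recover regularity}---uniform-in-parameter random-series bounds, compactification of the parameter space via $\delta$, multiresolution spectral estimates for the Gram matrix, or balancing the growing norm term against a decaying $\log\det$ term---is required, since the optimization is one-dimensional with an explicit minimizer, the regularity is fixed, and the stochastic content reduces to a single chi-squared LLN.
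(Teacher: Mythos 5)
Your proposal is correct and follows essentially the same route as the paper: the first-order condition gives the explicit minimizer $\sigma^{\mathrm{EB}}(q,\ud)=\sigma\sqrt{\|u(\cdot,s,q)\|_s^2/2^{qd}}$, Proposition \ref{prop: representation of H^t norm of u(,t,q)} at $t=s$ reduces the norm term to an unweighted sum of i.i.d.\ $\chi^2$ variables, and the law of large numbers finishes the argument. The extra details you supply (strict convexity in $\eta=\varsigma^2$, Chebyshev for the $\chi^2$ concentration, continuity of the square root) are all consistent with, and slightly more explicit than, the paper's proof.
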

\begin{proof}
    By taking the derivative of  $\mathsf{L}^{\text{EB}}(\varsigma,q,\ud)$ with respect to $\varsigma$ and setting it to $0$, we get the explicit formula:
    \begin{equation}
     \sigma^{\text{EB}}(q,\ud)=\sigma\sqrt{\frac{\|u(\cdot,s,q)\|_s^2}{2^{qd}}}\, .   
    \end{equation}
    Due to Proposition \ref{prop: representation of H^t norm of u(,t,q)}, we get our $\|u(\cdot,s,q)\|_s^2=\sum_{m \in B_q^d} \xi_m^2$. By the Law of Large Numbers, we have
    \[\lim_{q \to \infty}\frac{\|u(\cdot,s,q)\|_s^2}{2^{qd}} =1\, ,  \]
    from which the consistency follows.
\end{proof}
{\begin{remark}
We note that consistency results for the amplitude parameter have been well studied in the literature; see \cite{stein99book}. The purpose of this subsection is to tie those results to the rather explicit setting of our paper. One important feature of the torus model is that we are able to get an explicit and simple formula for $\sigma^{\text{EB}}$, so the consistency results are very clear. Moreover, since $\sigma^{\text{EB}}$ is the average of i.i.d. Gaussian random variables, one can also easily read off other statistical properties of this estimator (although the result of asymptotic distribution is also not completely new; see for example the discussion on page 201 in \cite{stein99book}).
\end{remark}}
\subsubsection{Recovery of $s,\sigma$ simultaneously}
We now build on the previous experiment to study whether the EB method can recover $s,\sigma$ simultaneously when $\tau$ is fixed. 
We reemphasize that since the measures
are mutually singular with respect to changes in $\sigma$ and $s$ we do expect to be able to recover
$(\sigma,s)$ by EB.
The basic set-up is the same as the last subsection, and now we minimize the EB loss function to recover $s,\sigma$ where, again,
$\sigma=\exp(\sigma')$. We run $50$ instances (each instance corresponds 
to a random draw of $\xi$), and collect the estimators $(s^{\text{EB}},\log \sigma^{\text{EB}})$ of the EB loss function for each instance. We present the histogram of the two values obtained in the experiments as follows (Figure \ref{fig: recover s sigma}).
\begin{figure}[ht]
    \centering
    \includegraphics[width=6cm]{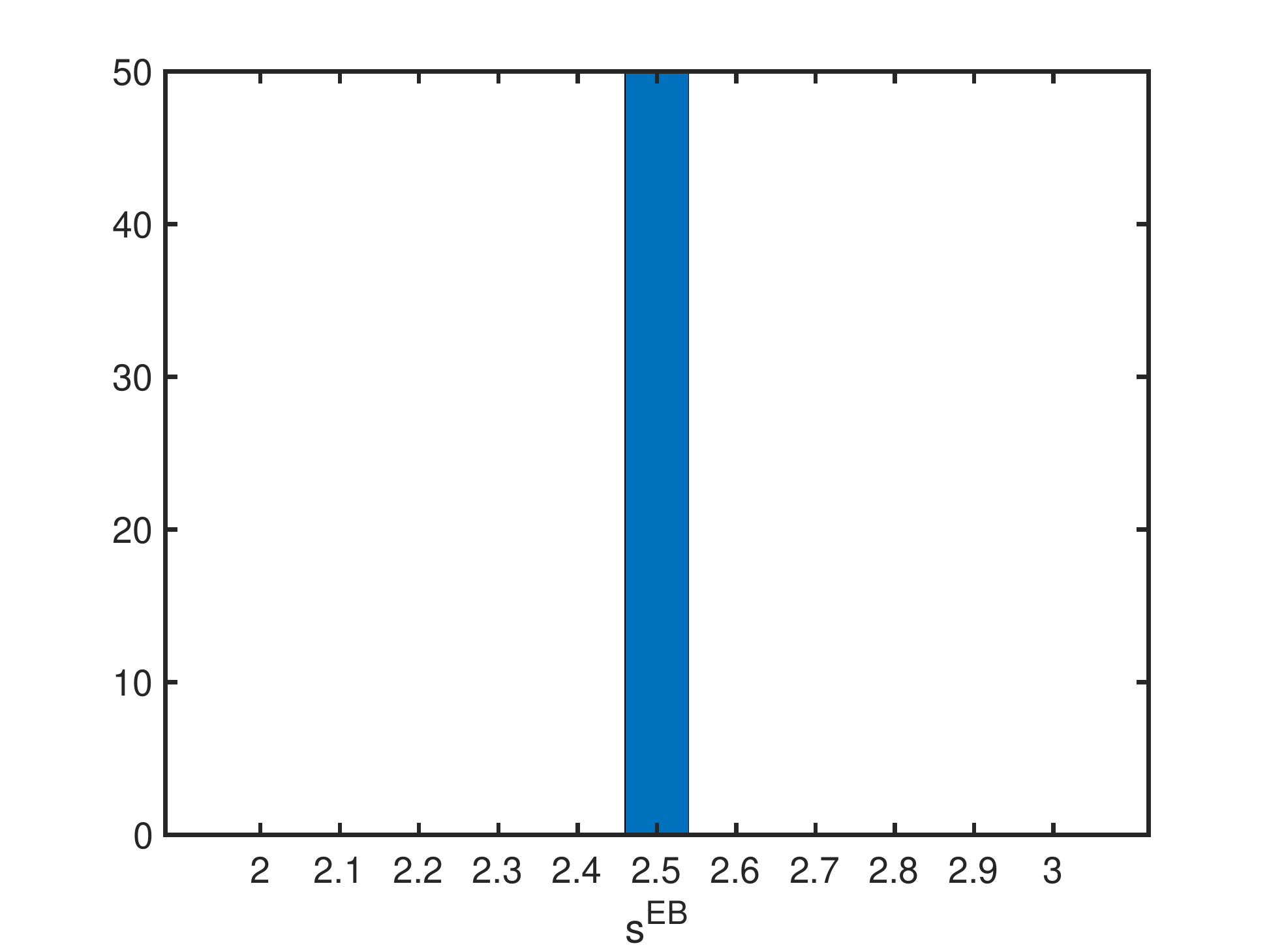}
    \includegraphics[width=6cm]{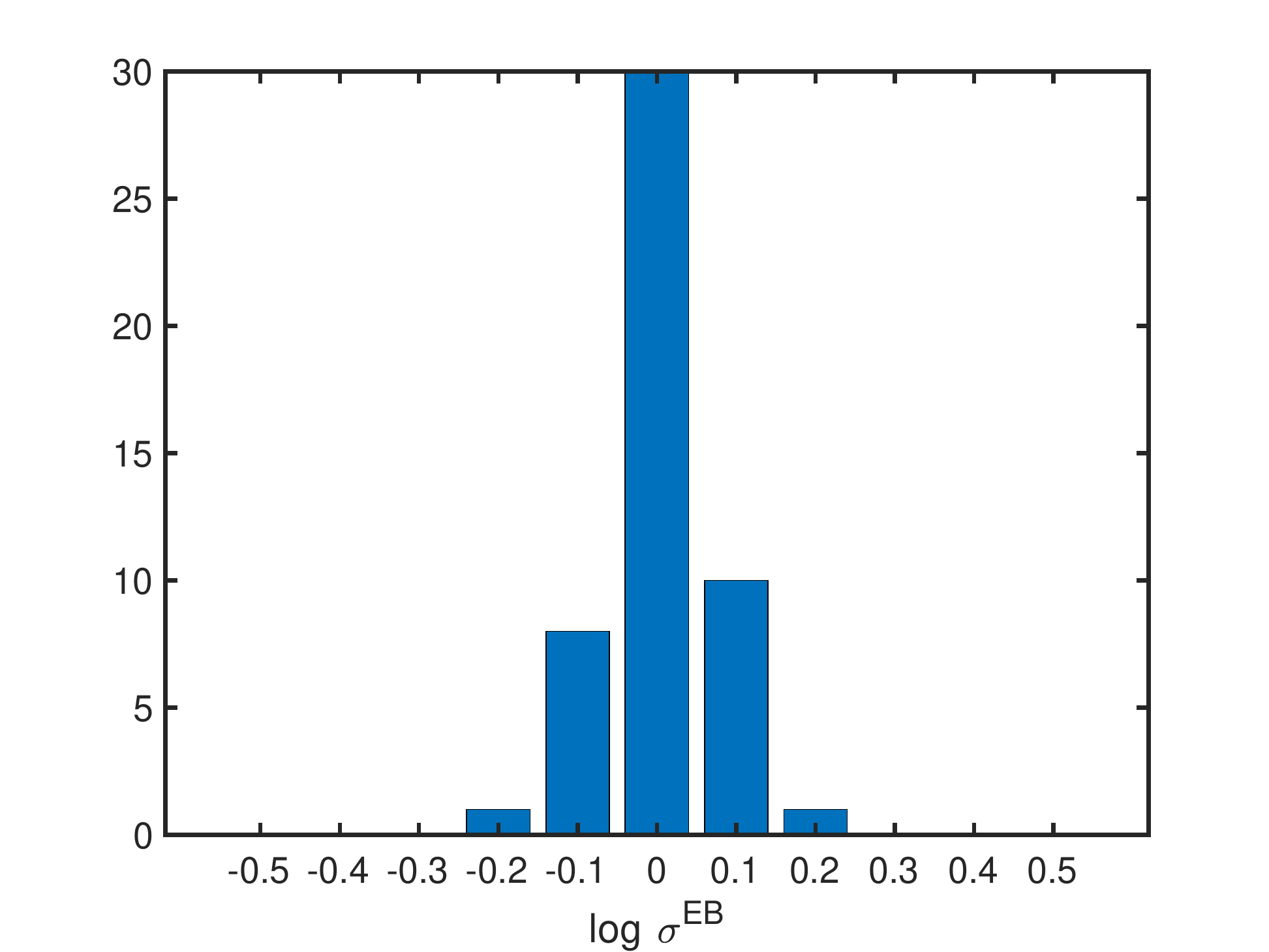}
    \caption{Left: histogram of the $s^{\text{EB}}$; right: histogram of the $\log \sigma^{\text{EB}}$}
    \label{fig: recover s sigma}
    \end{figure} 
    
From the figure, we observe that in the $50$ runs, the minimizer $(s^{\text{EB}},\sigma^{\text{EB}})$ is close to the ground truth $(2.5,1)$. We conclude that the EB method can recover the two parameters simultaneously in such a context.
\subsubsection{Recovery of $\tau$}
We consider whether EB and KF can recover the inverse lengthscale parameter $\tau$. We assume
that $\sigma$ is fixed at $1$, $s$ is chosen to be $2.5$, and sample $\ud \sim \cN(0,(-\Delta + \tau^2 I)^{-s})$ with $\tau=1$. As in the preceding experiments we consider the one dimensional torus example, and the same discretization precision and data acquisition setting as before. We draw $50$ instances of $\ud$, and for each of them, calculate the minimizers of the EB and KF loss function. We write $\tau=\exp(\tau')$ and  the estimator is $\log \tau^{\text{EB}}$ for $\tau'$, which we constrain to be in the
interval $[-2,2]$. In the EB loss function we fix $t=s$ within the loss function;
for the KF method, we select $t=s$ (case 1) and $t=\frac{s-d/2}{2}$ (case 2) respectively within
the loss function. The histogram of the minimizers of the resulting EB loss function and KF loss functions (in both cases) are presented in Figure \ref{fig: recover tau}, expressed in terms of
$\log \tau^{\text{EB}}$ and $\log \tau^{\text{KF}}$.
\begin{figure}[ht]
    \centering
    \includegraphics[width=6cm]{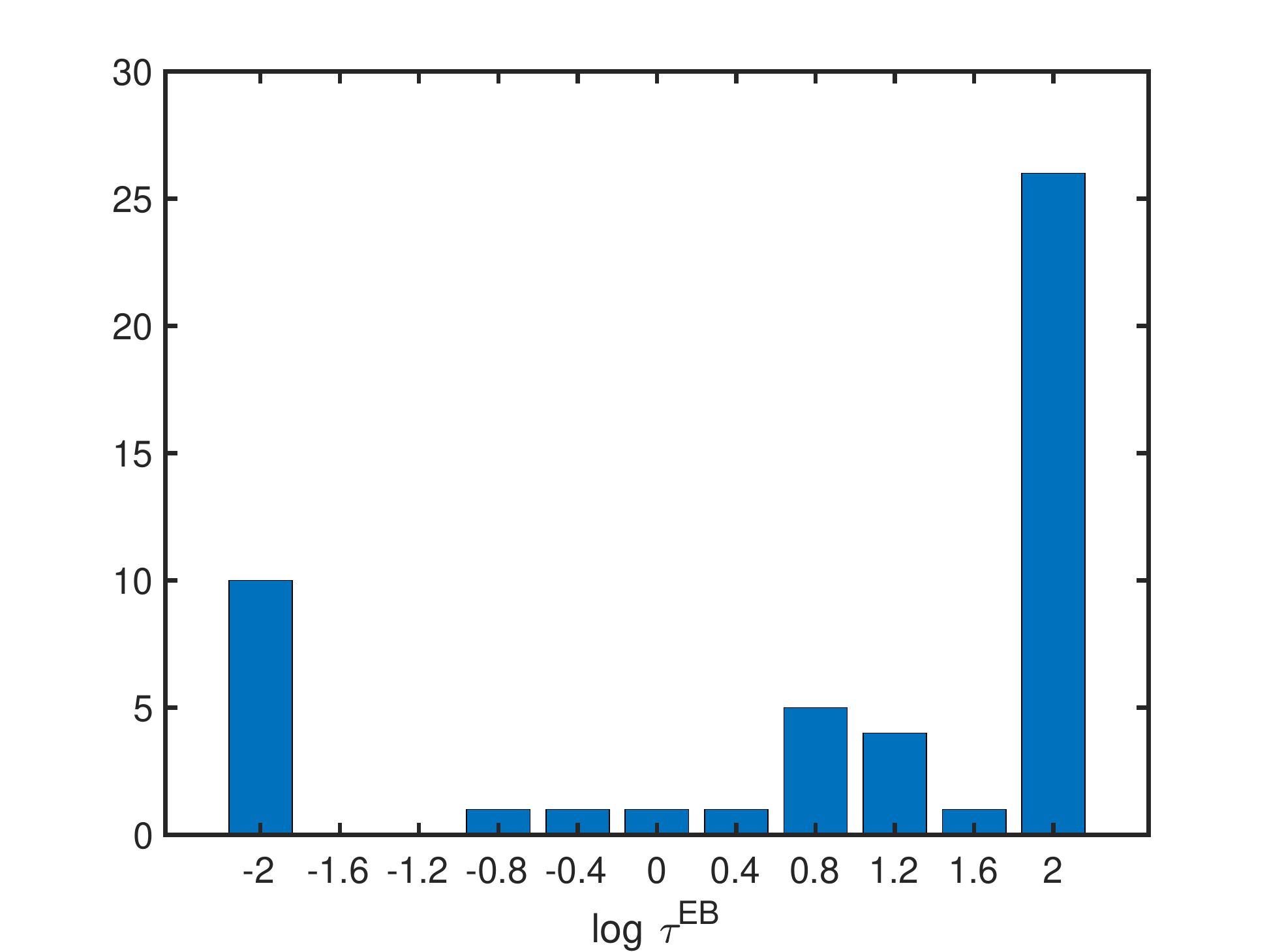}
    \includegraphics[width=6cm]{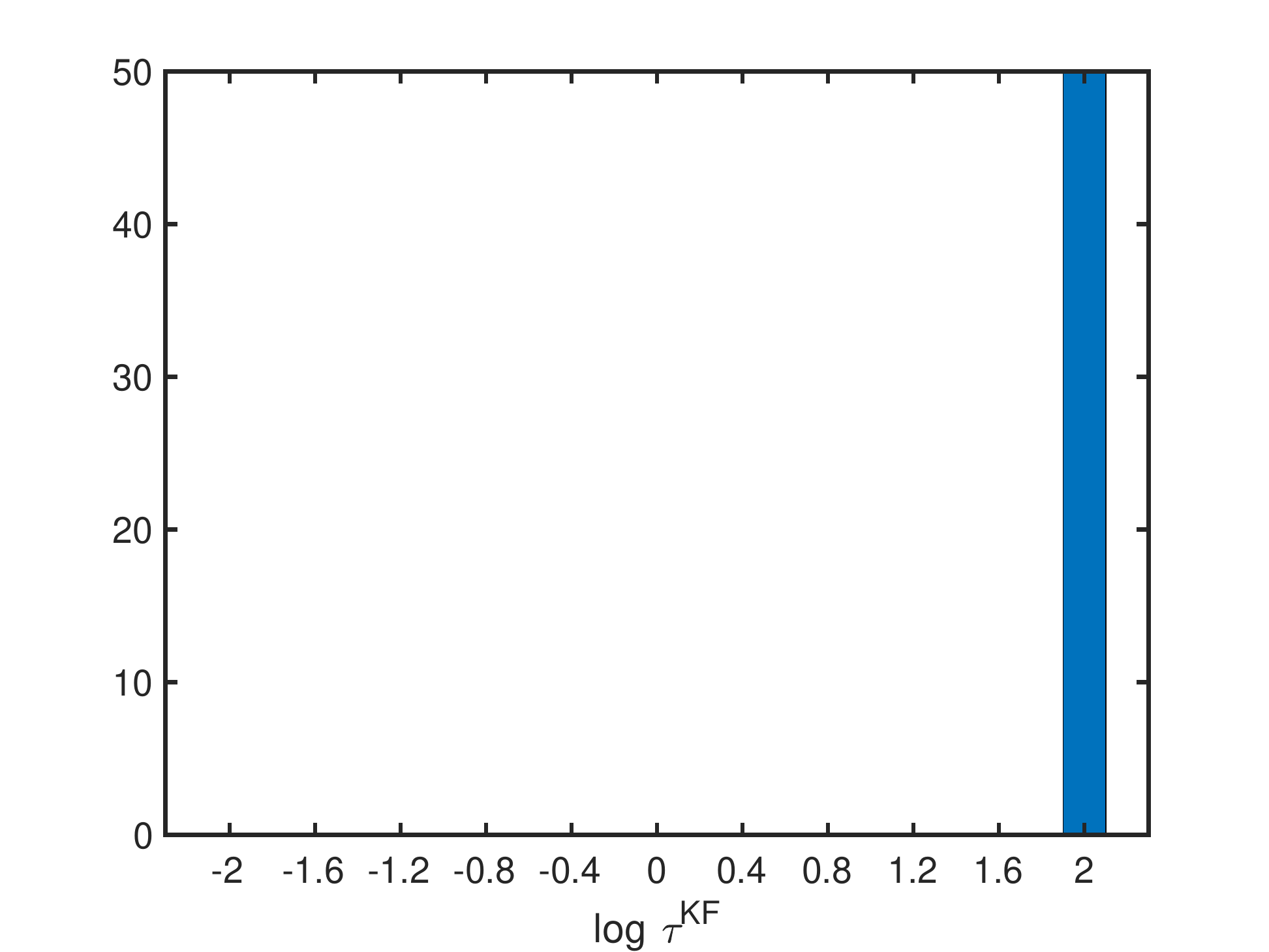}
    \includegraphics[width=6cm]{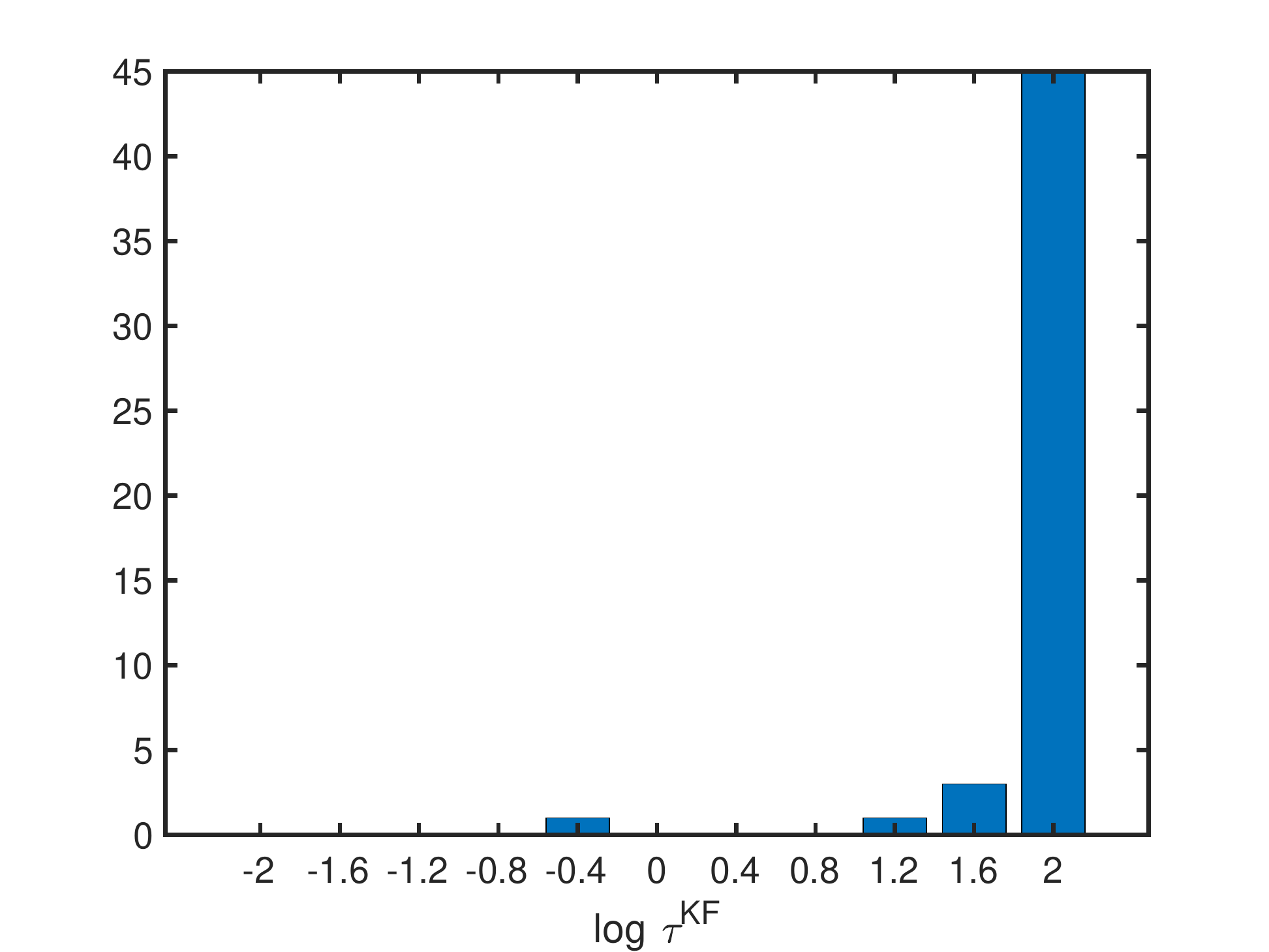}
    \caption{Histogram of the $\log \tau^{\text{EB}}$ or $\log \tau^{\text{KF}}$. Upper left: EB loss; upper right: KF loss (case 1); bottom: KF loss (case 2)}
    \label{fig: recover tau}
    \end{figure} 
In the 50 runs, the EB estimator takes many different values with no apparent pattern. For both case 1 and case 2, the KF estimator of $\tau'$ takes the value $2$ very often, which is the maximal value of the constrained decision variable. None of the estimators recover the true $\tau'=0$.  

The behavior of the KF estimator can be explained by the observation that when $\tau$ increases, the function drawn from the Gaussian prior becomes smoother, and hence the subsampling step in the KF loss does not sacrifice too much information. Therefore, the KF loss exhibits a tendency to get smaller as $\tau$ increases.
We can understand why EB cannot recover $\tau$ by studying the equivalence of Gaussian measures. As shown in \cite{dunlop2017hierarchical}, when dimension $d\leq 3$, the Gaussian measures $\cN(0,(-\Delta+\tau^2 I)^{-s})$ for different $\tau$ are equivalent; thus one cannot expect to recover $\tau$ using the information from one sample. 

We can also consider the problem of recovering $s,\tau$ simultaneously, i.e., we solve a joint minimization problem to get $s^{\text{EB}},\log \tau^{\text{EB}}$ and $s^{\text{KF}}, \log \tau^{\text{KF}}$. The set-up is the same as above, with the sample drawn from $\cN(0,(-\Delta+\tau^2I)^{-s})$ for $\tau=1$ and $s=2.5$. We form the EB and KF loss for $50$ instances of different draws and find the minimizers as corresponding estimators. The histograms of the estimators are shown in Figure \ref{fig: recover s tau EB} and \ref{fig: recover s tau KF}.
These figures show that in this joint optimization, the EB method picks the correct value
$s^{\text{EB}}=2.5$ for estimating $s$, and exhibit no patterns for $\log \tau^{\text{EB}}$; the KF method finds values close to $1$ for $s^{\text{KF}}$,
as it would in the absence of simultaneous estimation of $\tau'$, and selects the largest possible value in the constraint for $\log \tau^{\text{KF}}$, here being $2$. The conclusion is that the fact that  $\tau'$ cannot be learned
accurately does not influence the estimation of the regularity parameter $s$ in a context in which the two
are learned simultaneously. Indeed, this conclusion also holds when we are recovering the three parameters $(s,\sigma,\tau)$ simultaneously.

\begin{figure}[ht]
    \centering
    \includegraphics[width=6cm]{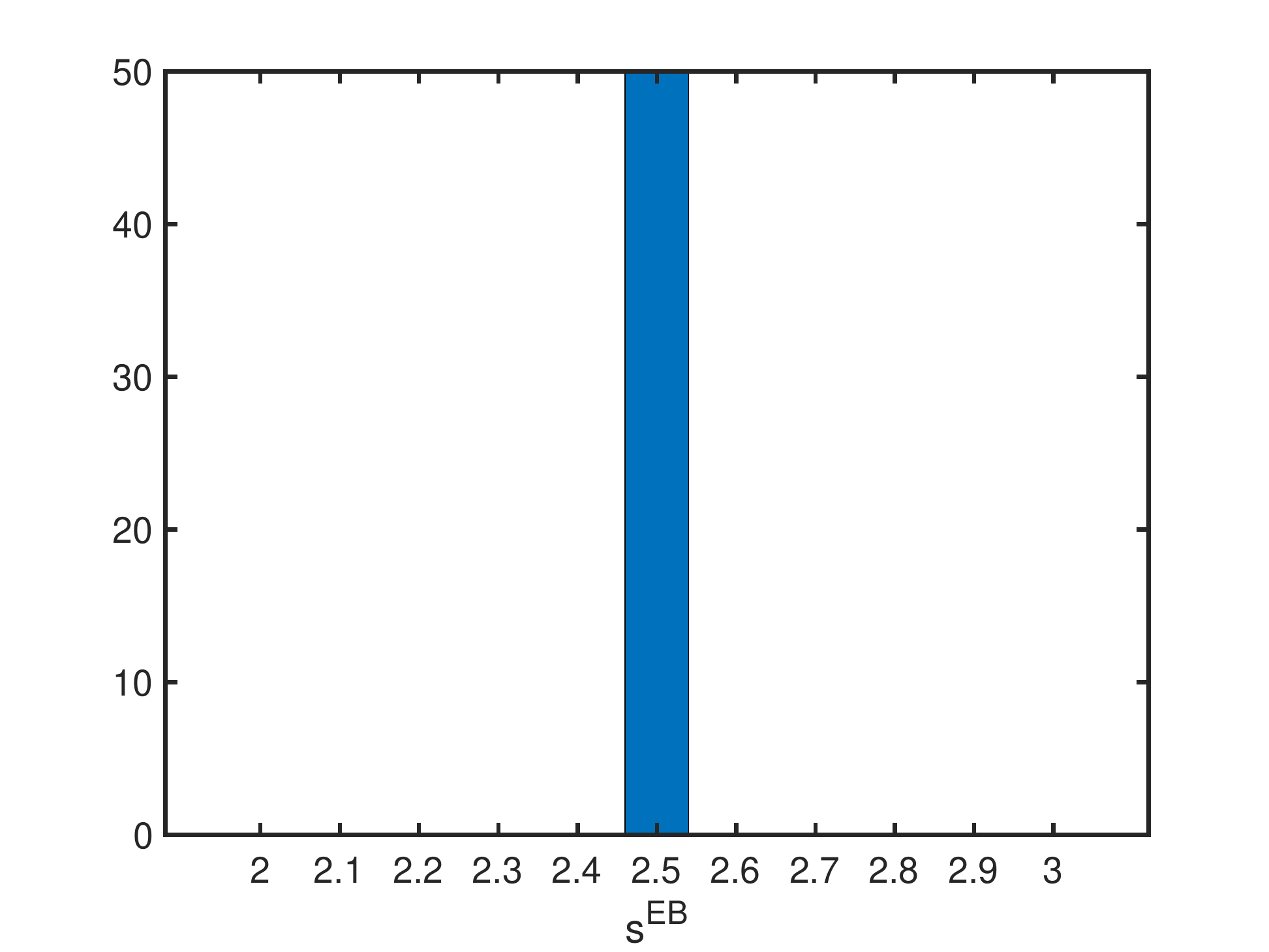}
    \includegraphics[width=6cm]{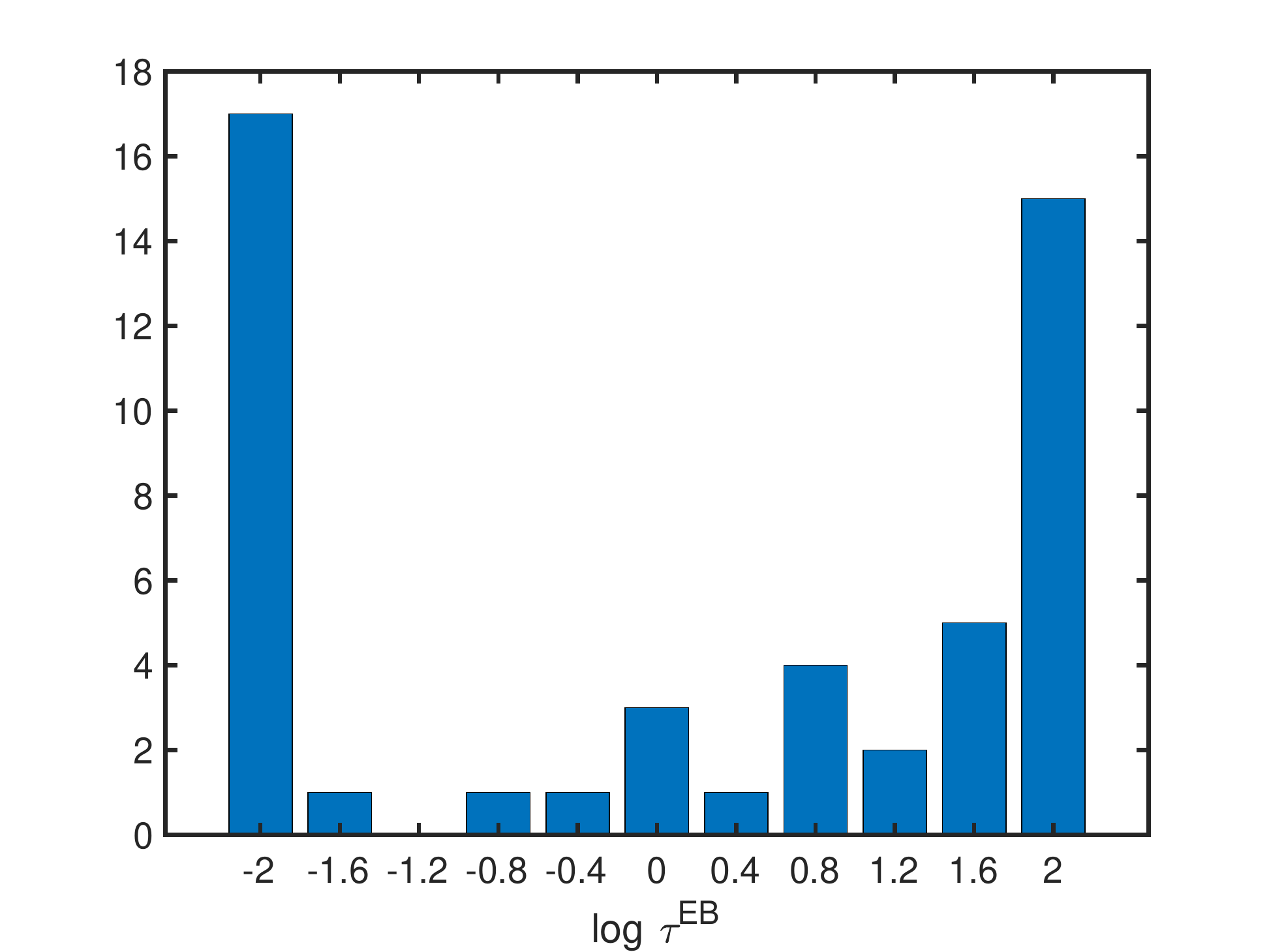}
    \caption{EB approach. Left: histogram of the $s^{\text{EB}}$; right: histogram of the $\log \tau^{\text{EB}}$}
    \label{fig: recover s tau EB}
    \end{figure} 

\begin{figure}[ht]
    \centering
    \includegraphics[width=6cm]{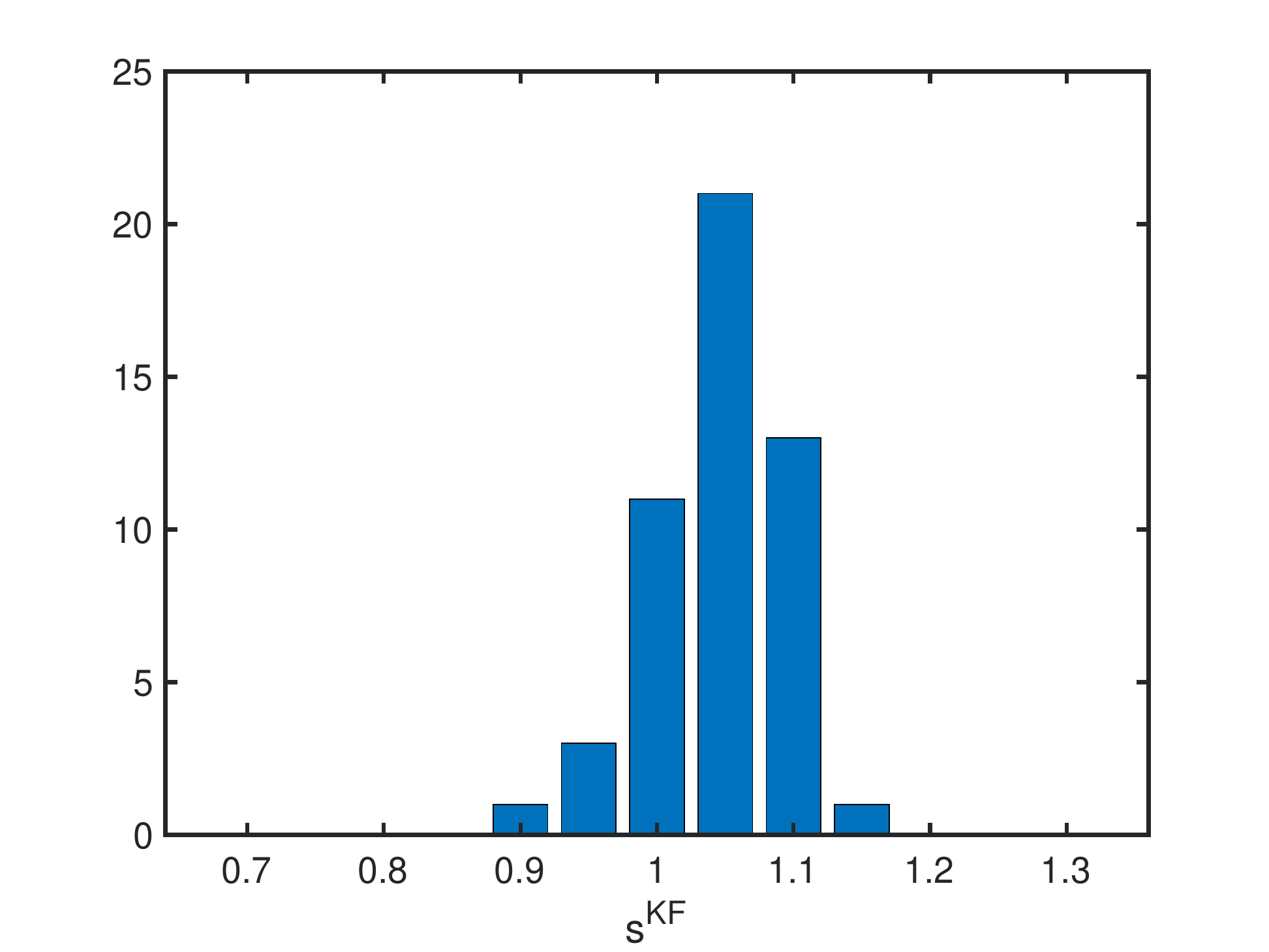}
    \includegraphics[width=6cm]{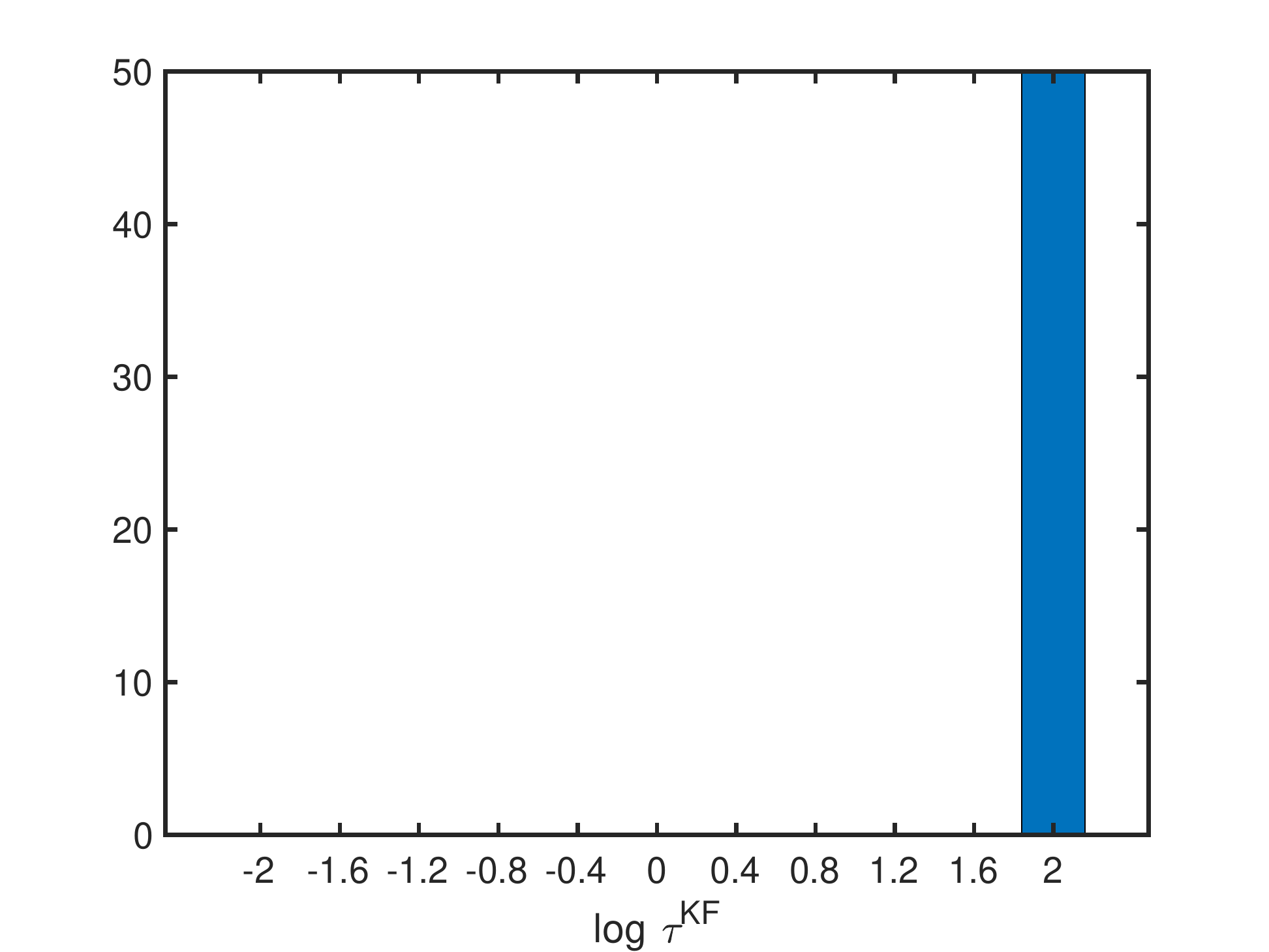}
    \caption{KF approach. Left: histogram of $s^{\text{KF}}$; right: histogram of the $\log \tau^{\text{KF}}$}
    \label{fig: recover s tau KF}
    \end{figure}

\subsection{Other Well-specified Examples}
\label{sec: Other Well-specified Examples}
In this subsection, we consider numerical examples for recovering parameters of a random field in the well-specified case, going beyond the Mat\'ern process studied thus far.
\subsubsection{Recovery of regularity parameter for variable coefficient elliptic operator} 
\label{subsubsec: Recovery of regularity parameter for variable coefficient elliptic operator}

Set $D=[0,1]$ so that $d=1$. The theoretical result in Section \ref{sec: recover regularity} assumes the function observed $\ud$ is drawn from $\cN(0,(-\Delta)^{-s})$ on a torus. In this subsection, we assume $\ud$ is drawn from $\cN(0,(-\nabla\cdot (a\nabla \cdot))^{-s})$ for some non-constant function $a$, and that the elliptic operator  implicit in
this defintion of a Gaussian measure is equipped with homogeneous 
Dirichlet boundary condition on  $D$.
We observe its values on the $2^9$ equidistributed points of the total $2^{10}$ grid points used for discretization. 

Here we select a  coefficient $a(x)$ that exhibits a discontinuity at $x=1/2$:
\begin{equation}
\label{eq:aref}
    a(x)=\begin{cases}
      1 & x \in [0,1/2]\\
      2 & x \in (1/2,1]\, .
    \end{cases} 
\end{equation}
As a consequence the induced operator is not the Laplacian. We pick $s=2.5$ to draw a sample $\ud$. 

In the well-specified case, the GP used in defining the EB and KF estimators is parameterized  by $\cN(0, (-\nabla\cdot (a\nabla \cdot))^{-t})$ and
we aim to learn parameter $t$ given a data calculated using a draw
from the same measure with $t=s$. We consider the well-specified case here (the misspecified case will be considered in Subsection \ref{sec: misspecification, recover regularity}.) We output the histogram of the EB and KF estimators for $50$ different draws of $\ud$ in Figure \ref{fig: Histogram of estimator of s, variable coeff, for different draws}.
\begin{figure}[ht]
    \centering
    \includegraphics[width=6cm]{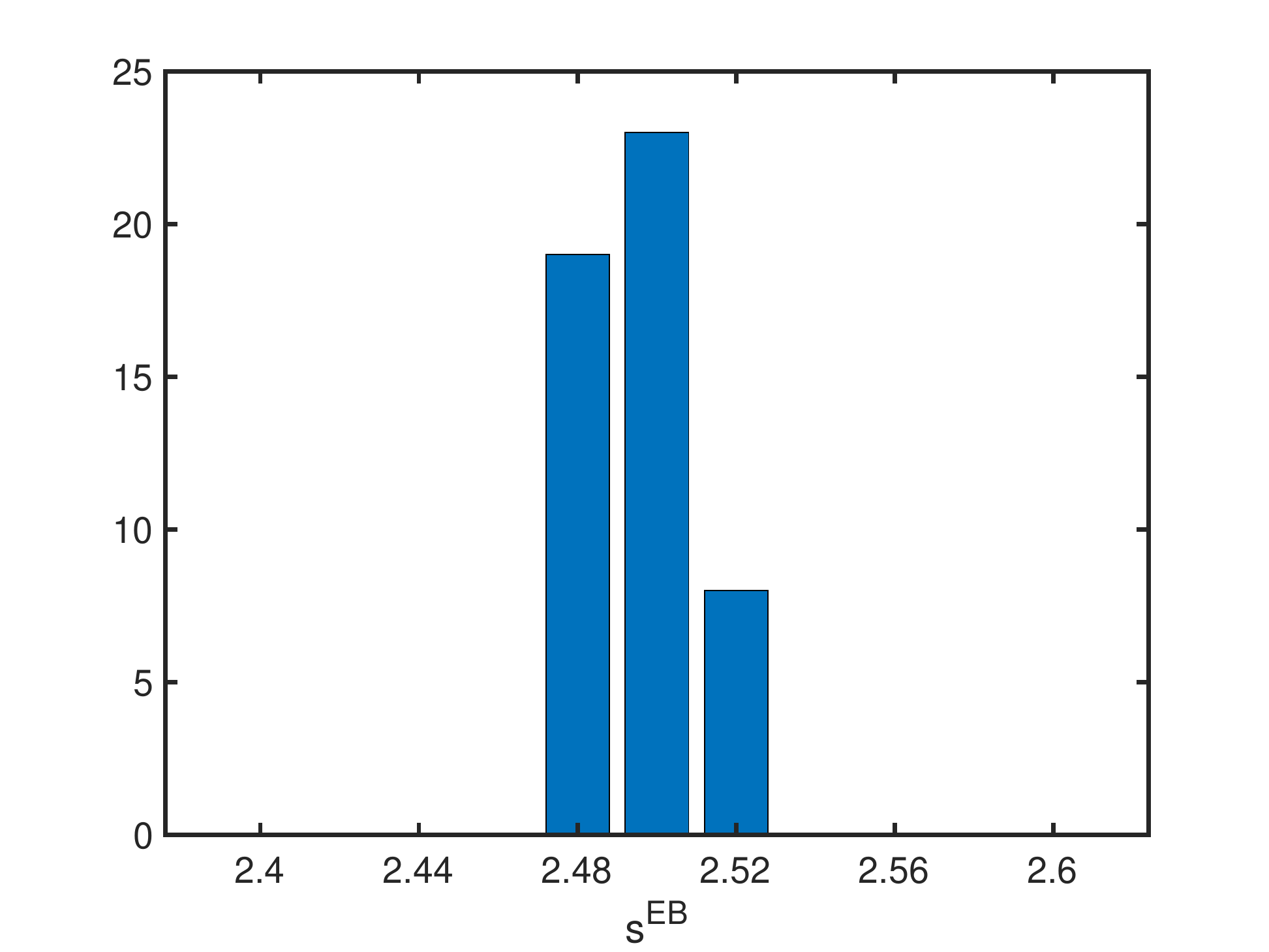}
    \includegraphics[width=6cm]{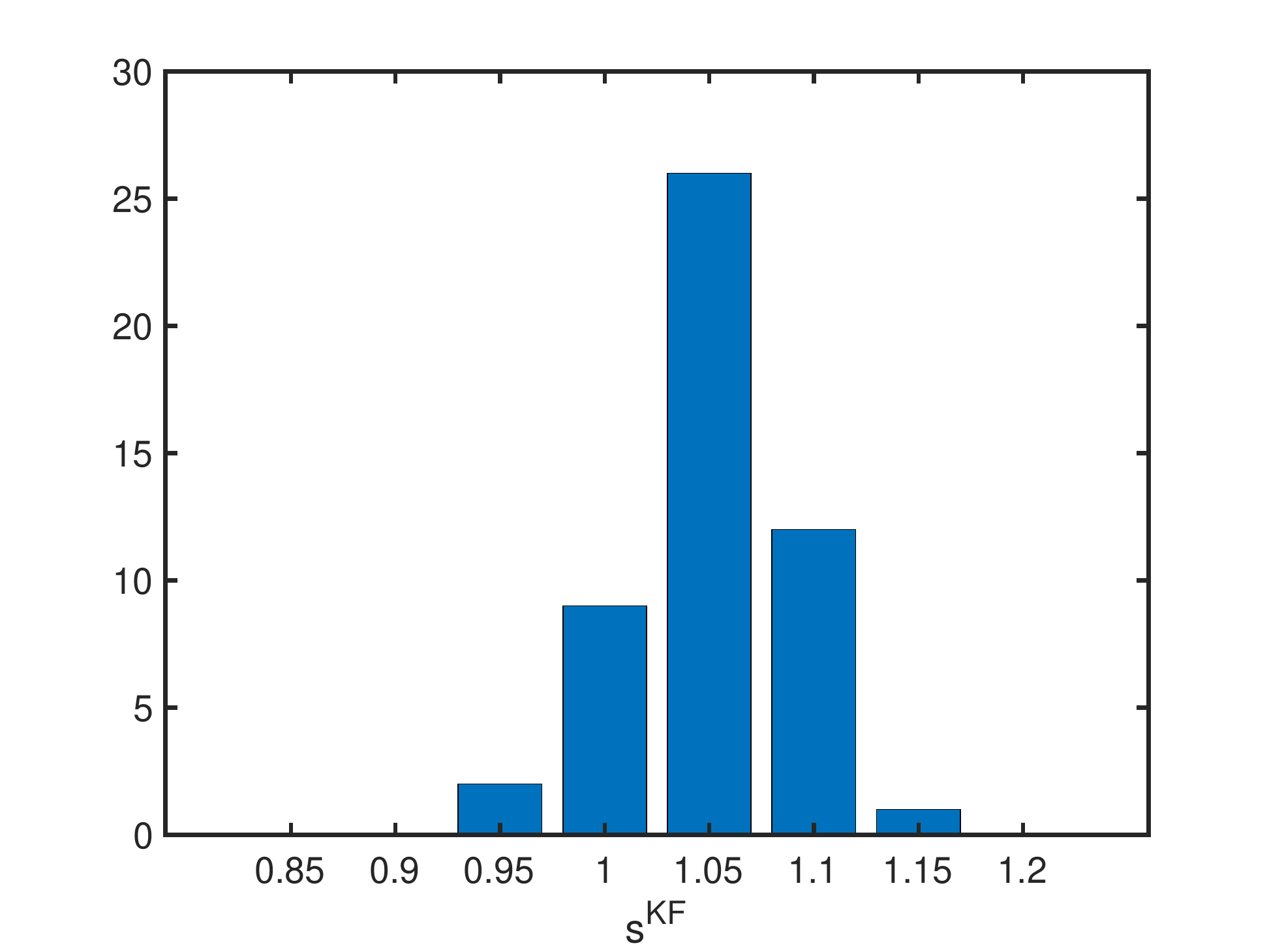}
    \caption{Histogram of the regularity estimators for the variable coefficient covariance case. Left: EB; right: KF}
    \label{fig: Histogram of estimator of s, variable coeff, for different draws}
    \end{figure} 
The experiments show that for the variable coefficient elliptic operator model, EB and KF succeed in converging to the correct limits. We can calculate the (normalized) variance of the two estimators based on the histograms:
\[\frac{\text{Var}(s^{\text{EB}})}{s^2} \approx 7.8\times 10^{-5} \quad \text{and}\quad \frac{\text{Var}(s^{\text{KF}})}{\left((s-d/2)/2\right)^2}\approx 4 \times 10^{-3}\, . \]
The relative magnitude is similar to the one in Subsection \ref{sec: experiments, variance of estimators}.
\subsubsection{Recovery of discontinuity position for conductivity field} 
\label{sec: Recovery of discontinuity position for conductivity field}
Define  the conductivity field
$a_{\theta}:[0,1] \mapsto \mathbb{R}$, and parameterized by  $\theta \in [0,1]$, via
\begin{equation}
    a_{\theta}(x)=\begin{cases}
      1 & x \in [0,\theta]\\
      2 & x \in (\theta,1]\, .
    \end{cases} 
\end{equation}
In this subsection, we assume that our data $\ud$ is obtained by solving the SPDE 
\[-\nabla\cdot (a_{1/2}\nabla  \ud)=\xi\, ,\]
subject to a homogeneous Dirichlet boundary condition on $[0,1].$ We choose
$\xi$ as a random draw from $\cN(0, (-\Delta)^{-1})$. We can view $\ud$ is a sample drawn from $\cN(0,C_a)$ where
\begin{equation}
    \label{eq:cas1}
    C_a=(-\nabla\cdot (a_{1/2}\nabla  \cdot))^{-1}(-\Delta)^{-1}(-\nabla\cdot (a_{1/2}\nabla  \cdot))^{-1}.
\end{equation}

We observe the value of $\ud$ on the $2^9$ equidistributed points of the total $2^{10}$ grid points used for discretization. We use EB and KF to estimate $\theta$ from the partial observation of the function  $\ud$ based on the GP model $\cN(0,C_{a,s})$ where
\begin{equation}
    \label{eq:cas}
C_{a,s}=(-\nabla\cdot (a_{\theta}\nabla  \cdot))^{-1}(-\Delta)^{-s}(-\nabla\cdot (a_{\theta}\nabla  \cdot))^{-1}.
\end{equation}
The model is well-specified for $s=1$ and misspecified for $s\not=1$. 
Here consider the well-specified case in this subsection, i.e., $s=1$,
and $C_{a,s}=C_a$;  the misspecified case is covered in Subsection \ref{sec: Stochastic model misspecification for recovering discontinuity}.

We let the domain for $\theta$ be $[0.3,0.7]$ in the definition of EB and KF estimators. We compute the estimators for $50$ different draws of $\ud$. The histograms of the EB and KF estimators are shown in Figure \ref{fig: Histogram of the discontinuity position estimators (well-specified). }.  The loss functions for one random instance are shown in Figure \ref{fig: Loss function for recovering the discontinuity (well-specified)}. 
\begin{figure}[ht]
    \centering
    \includegraphics[width=6cm]{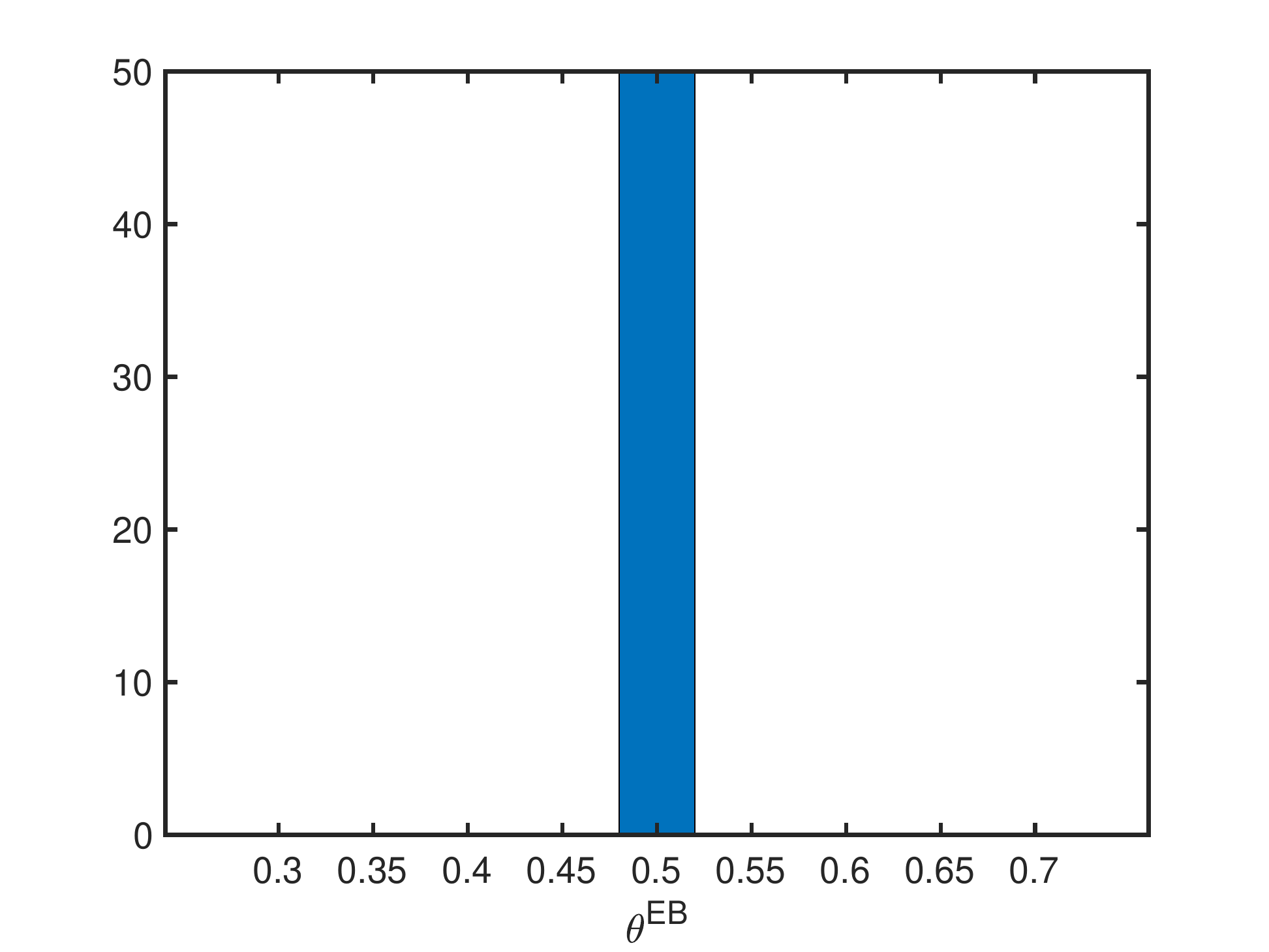}
    \includegraphics[width=6cm]{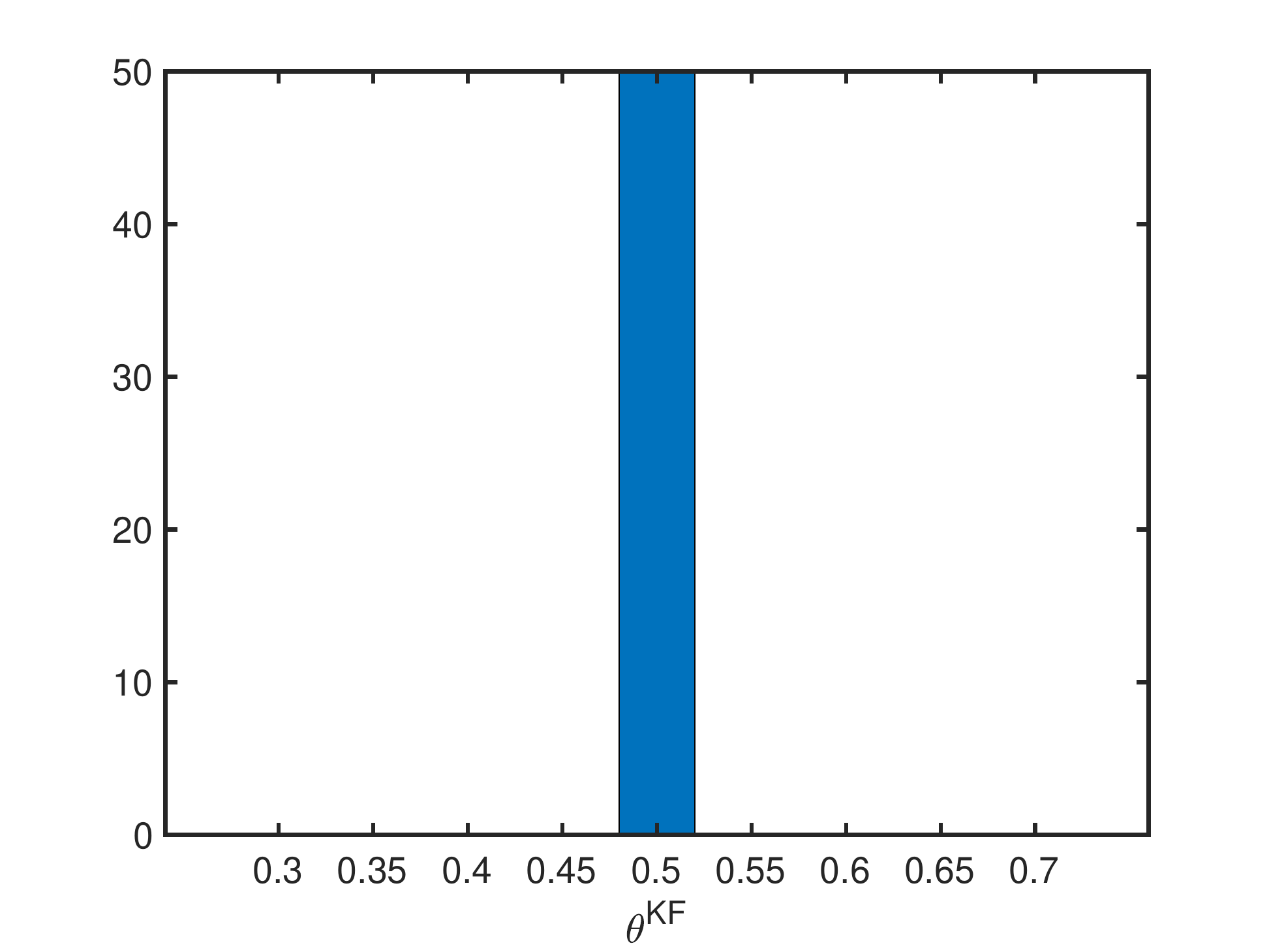}
    \caption{Histogram of the discontinuity position estimators (well-specified). Left: EB; right: KF}
    \label{fig: Histogram of the discontinuity position estimators (well-specified). }
    \end{figure} 
    
\begin{figure}[ht]
    \centering
    \includegraphics[width=6cm]{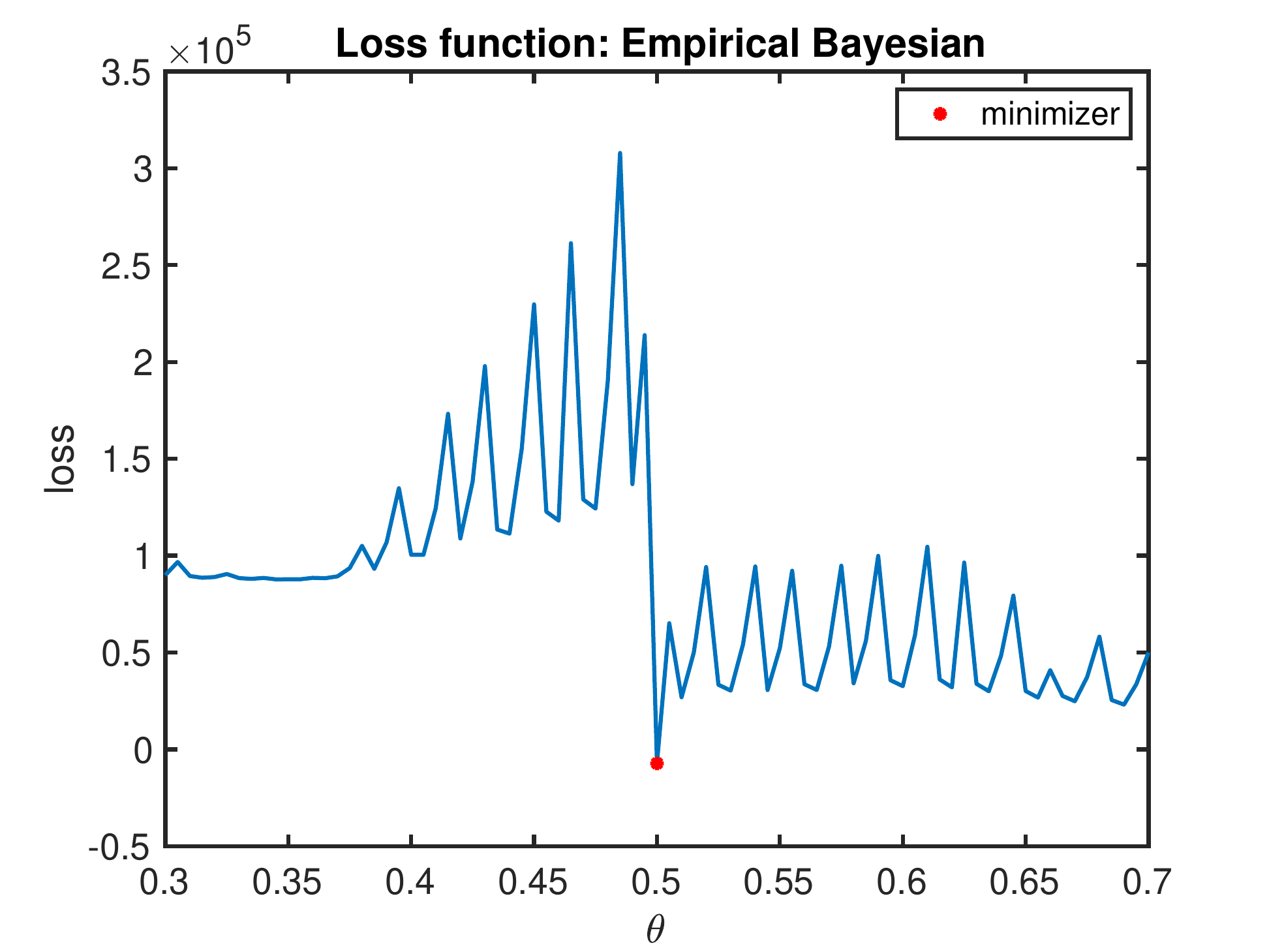}
    \includegraphics[width=6cm]{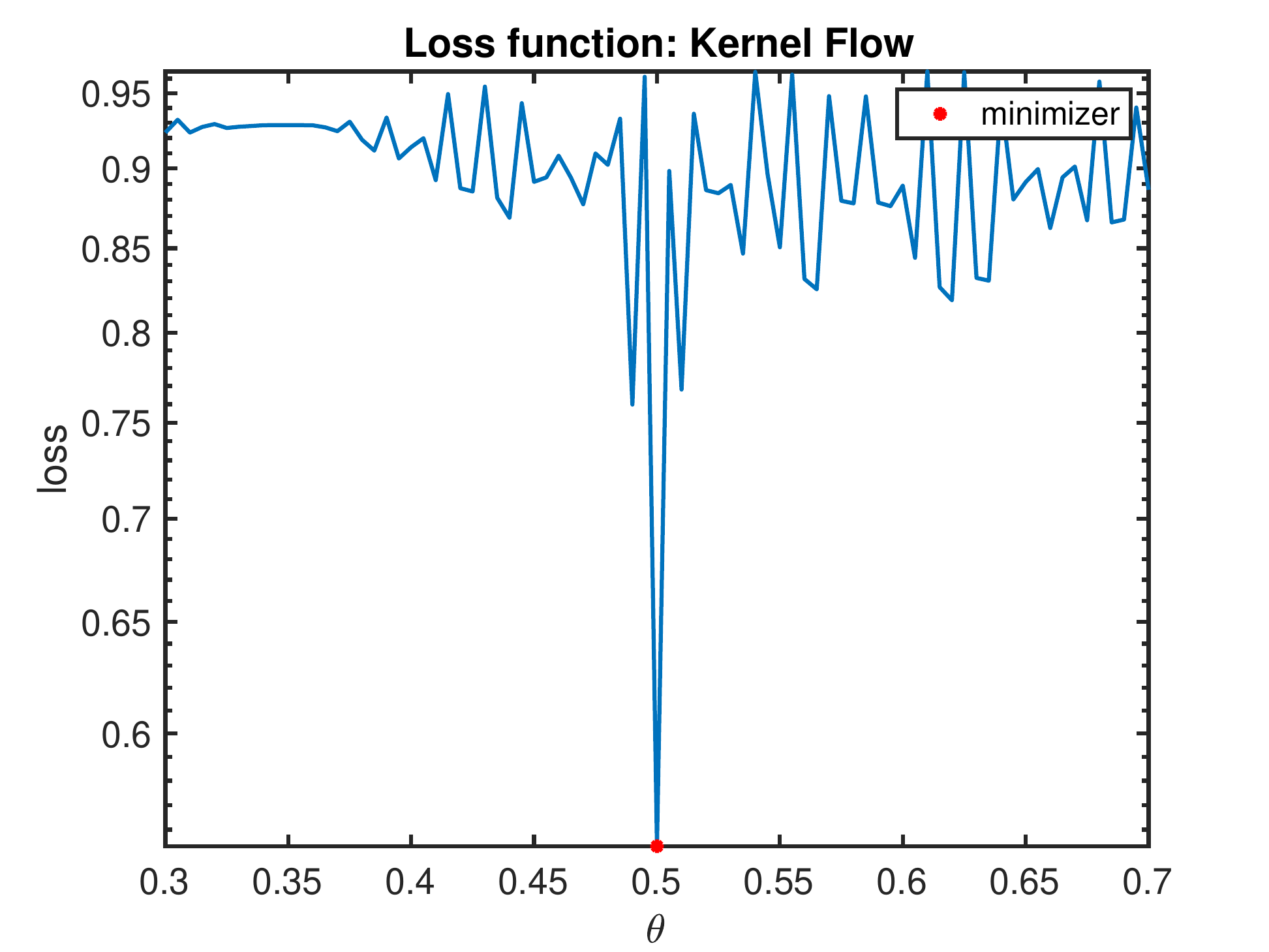}
    \caption{Loss function for recovering the discontinuity (well-specified). Left: EB; right: KF}
    \label{fig: Loss function for recovering the discontinuity (well-specified)}
    \end{figure} 

Our experiments show that both EB and KF can recover $\theta=1/2$, and the recovery is very stable with respect to different draws of $\ud$ from the SPDE. We conclude that the EB and KF can go beyond the Mat\'ern-like kernel model in practice; recovering the point of discontinuity of the conductivity field is an example of this fact.
\subsection{Computational Aspects}
\label{sec: experiments, computational aspects}
In this subsection, we add some discussions about the computational aspects. {
We start by remarking on how to compute the kernel function and sample the GP realization generally. Every kernel operator we consider involves certain differential operators. We discretize these differential operators and perform an eigenfunction decomposition of the obtained matrix. Then we use these eigenfunctions and eigenvalues to compute approximation of the kernel matrix, and draw samples from the GP with the covariance matrix being the kernel matrix; see also discussions above Remark \ref{connect to whittle matern}. This is similar to the spectral expansion of a kernel function and the Mercer decomposition of a GP.}

{Practical applications of hierarchical GPR} require weighting statistical efficiency against computational complexity.
Although the regularity models covered in this paper appear to produce well-behaved
EB and KF loss functions with easily identifiable global minimizers, models with high dimensional parameter space  typically require using algorithms such as gradient descent which do not
come with  theoretical guarantees on the identification of global minimizers. Furthermore,  when the size of the data is large, computation becomes a limiting factor, and subsampling offers a traditional remedy when combined with gradient descent, but again theoretical guarantees
are not typically to be expected. The stochastic algorithm 
 presented in \cite{owhadi2019kernel} for KF can be interpreted as an SGD algorithm aimed at minimizing the average loss 
\[\bE_{\pi_1}\bE_{\pi_2} \mathsf{L}^{\mathrm{KF}}(\theta,\pi_1\cX,\pi_2\pi_1\cX,\ud)\, ,\]
via draws from the distribution of $\pi_1$ and $\pi_2$ ($\pi_1\cX$ is a random subsampling of $\cX$, and $\pi_2\pi_1\cX$ is a further random subsampling of $\pi_1\cX$). The efficacy of an analogous strategy for EB remains unclear due to the presence of the
log determinant term in the loss. It is of future interest to explore further the computational aspects of the EB and KF approaches to hierarchical learning.
\section{Model Misspecification}
\label{sec: experiments, robustness}
All our preceding experiments are focused on the well-specified case: the function $\ud$ is drawn from the GP model assumed in the
estimation, or equivalently, the model for $\ud$ and for the kernel family $K_{\theta}$ in defining the loss functions are matched. This subsection studies model misspecification. We consider two possible ways to misspecify the model: (1) the function $\ud$ is drawn from a GP which is different from
that used in defining the loss function; (2) the function $\ud$ is a fixed deterministic function. The second case
may arise, for example, if the function comes from a solution of a PDE with some physical data, and there is no natural stochastic context for its
provenance.  The aim of this subsection is to study the behavior of the 
EB and KF estimators to compare their robustness to model misspecification.

\subsection{Stochastic model misspecification for recovering regularity}
\label{sec: misspecification, recover regularity}
In this subsection, we assume $\ud$ is drawn from $\cN(0,(-\nabla\cdot (a\nabla \cdot))^{-s})$, while the GP used in defining the EB and KF estimators is still $\cN(0,(-\Delta)^{-t})$. This results in a model misspecification corresponding to the well-specified model in Subsection \ref{subsubsec: Recovery of regularity parameter for variable coefficient elliptic operator}. As in Subsection \ref{subsubsec: Recovery of regularity parameter for variable coefficient elliptic operator}, we select $a$ as in \eqref{eq:aref} 
and we set $s=2.5$ to draw the sample $\ud$.
  Figure \ref{fig: Histogram of estimator of s, mis-specification, for different draws} shows the histograms of the minimizers of the EB and KF loss functions obtained from  $50$ independent draws from the Gaussian Process.
Despite misspecification, the EB and KF estimators are still concentrated around $2.5$ and $1$, respectively.
We also calculate the variance:
\[\frac{\text{Var}(s^{\text{EB}})}{s^2} \approx 5.9\times 10^{-4} \quad \text{and}\quad \frac{\text{Var}(s^{\text{KF}})}{\left((s-d/2)/2\right)^2}\approx 6.8 \times 10^{-4}\, . \]
In this example, the (normalized) variance of KF of EB are of similar magnitude. This is different from the well-specified case in Subsection \ref{subsubsec: Recovery of regularity parameter for variable coefficient elliptic operator} where the variance of EB is much smaller than KF.

\begin{figure}[ht]
    \centering
    \includegraphics[width=6cm]{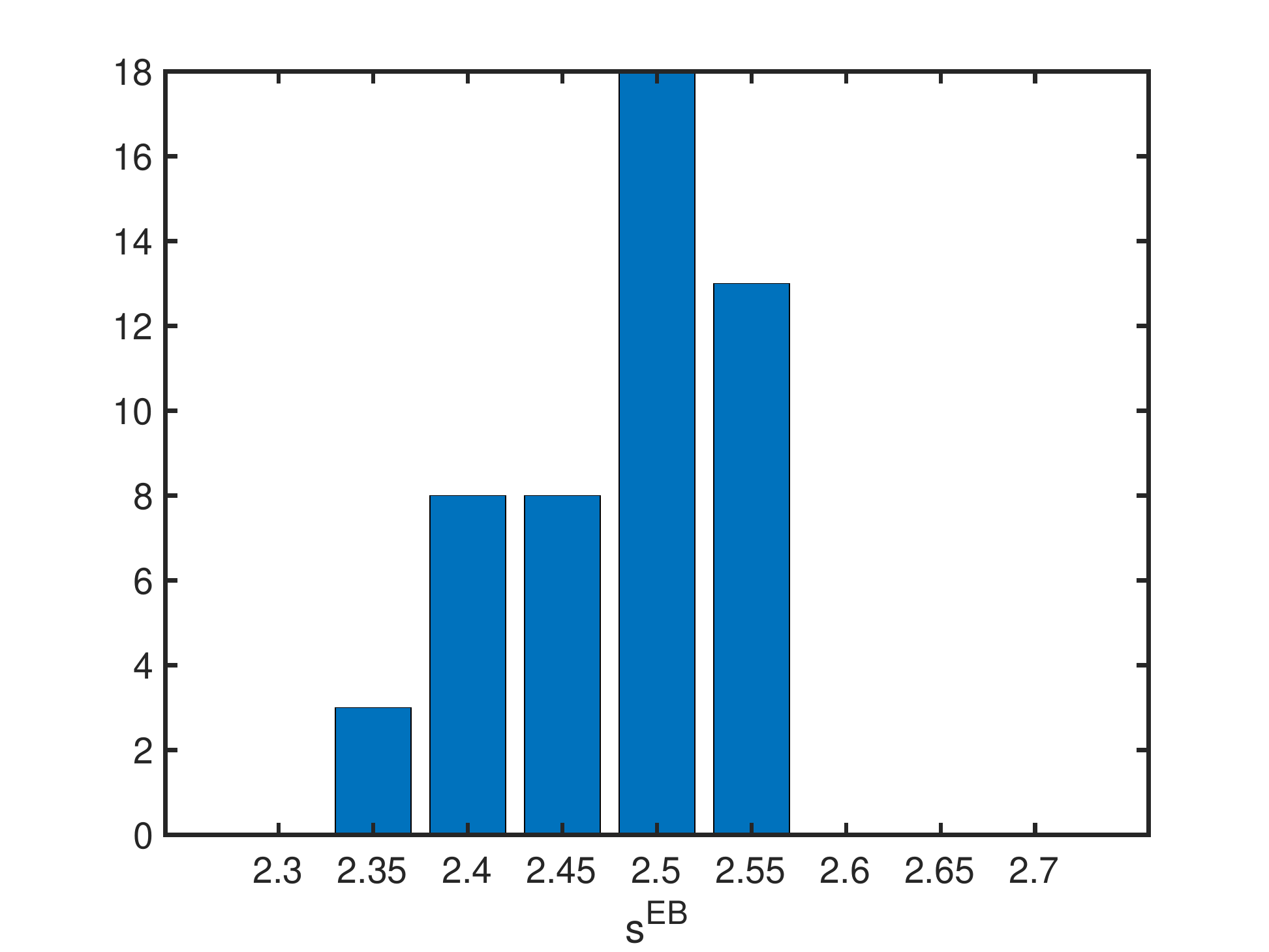}
    \includegraphics[width=6cm]{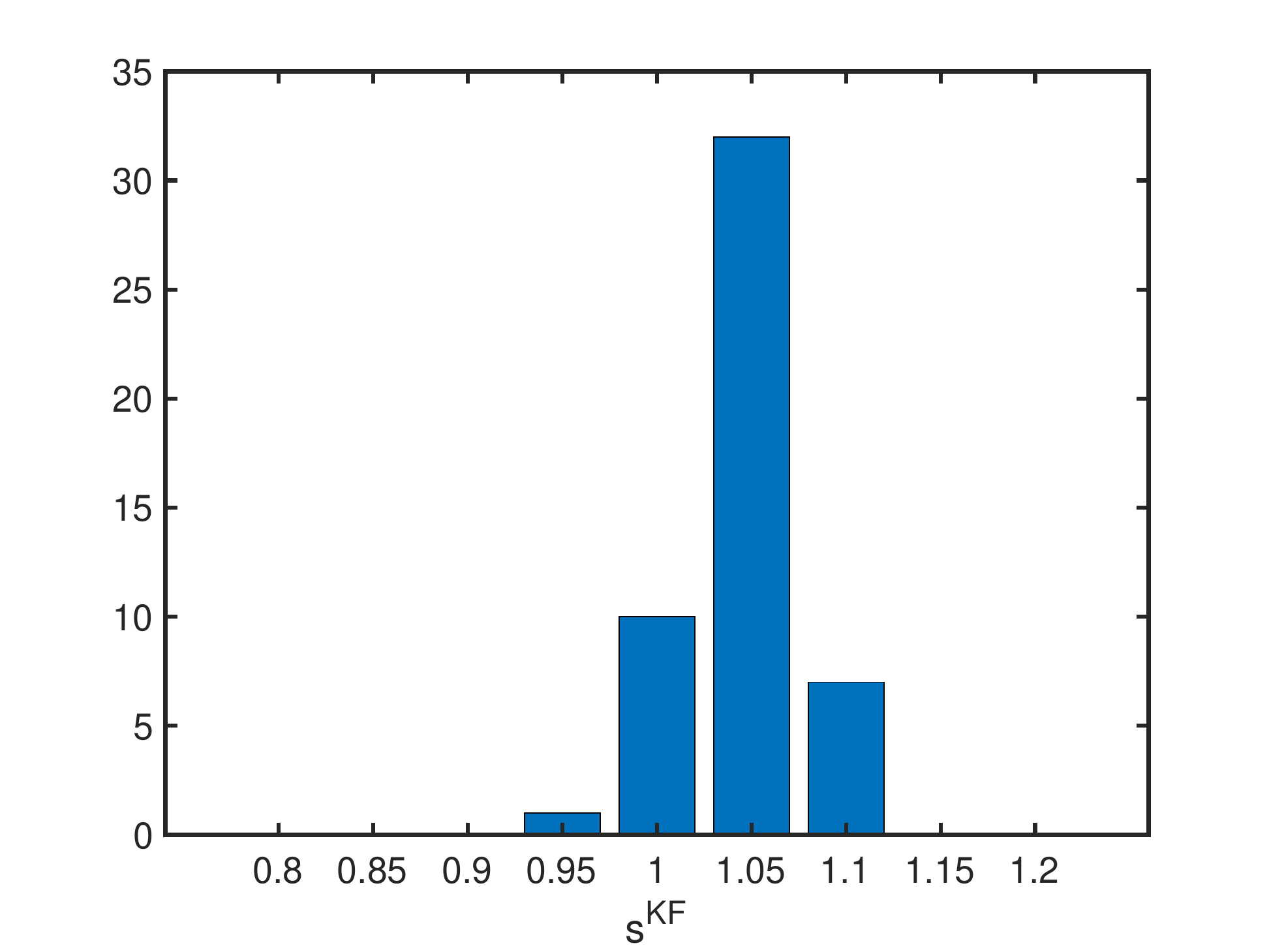}
    \caption{Histogram of the regularity estimators under model misspecification. Left: EB; right: KF}
    \label{fig: Histogram of estimator of s, mis-specification, for different draws}
    \end{figure}

\subsection{Stochastic model misspecification for recovering discontinuity} 
\label{sec: Stochastic model misspecification for recovering discontinuity}
In this subsection, we consider the model misspecifications that correspond to the well-specified case in Subsection \ref{sec: Recovery of discontinuity position for conductivity field}.
For the GP defining the EB and KF estimators we use the centred
Gaussian with covariance operator given by \eqref{eq:cas} with 
$s=5$; meanwhile  $\ud$ is drawn  from the centred
Gaussian with covariance operator given by \eqref{eq:cas1}; thus
we are in a misspecified version of the setting arising in Subsection \ref{sec: Recovery of discontinuity position for conductivity field} and,
as there, our aim is to recover the point of discontinuity. We illustrate the loss functions for a single draw of $\ud$ in Figure \ref{fig: loss function of estimator of discontinuity, mis-specification}. These plots are not sensitive to the particular draw of $\ud$ and illustrate the robustness of KF (and the lack of robustness of EB) to this misspecification. Indeed, the EB estimator gives $0.3$ which is the lower boundary of the compact parameter space used
in the minmization, while the KF estimator picks the true parameter $0.5$. The loss function of KF, shown in Figure \ref{fig: loss function of estimator of discontinuity, mis-specification}, exhibits a sharp global minimizer at $\theta=0.5$. 
\begin{figure}[ht]
    \centering
    \includegraphics[width=6cm]{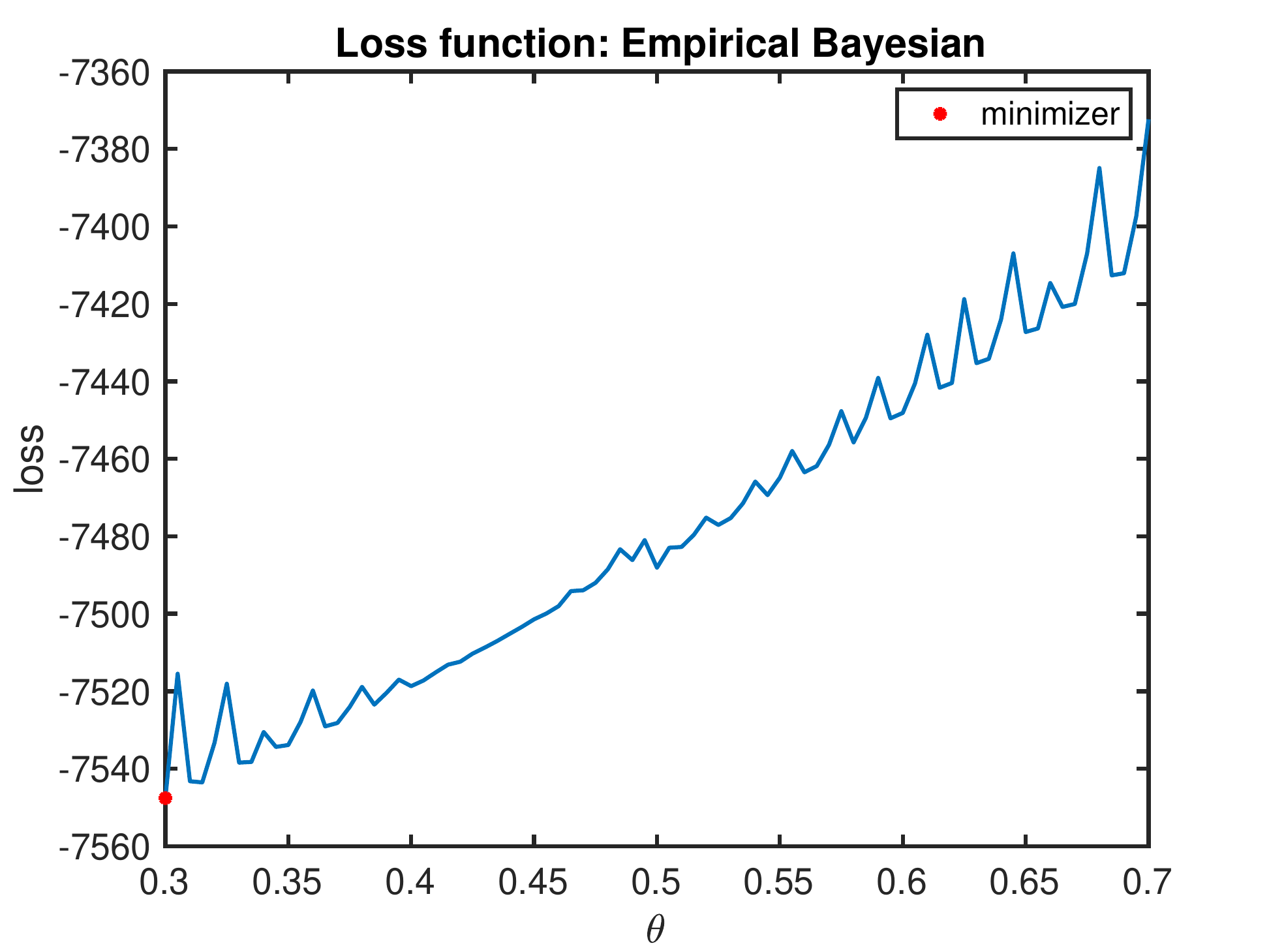}
    \includegraphics[width=6cm]{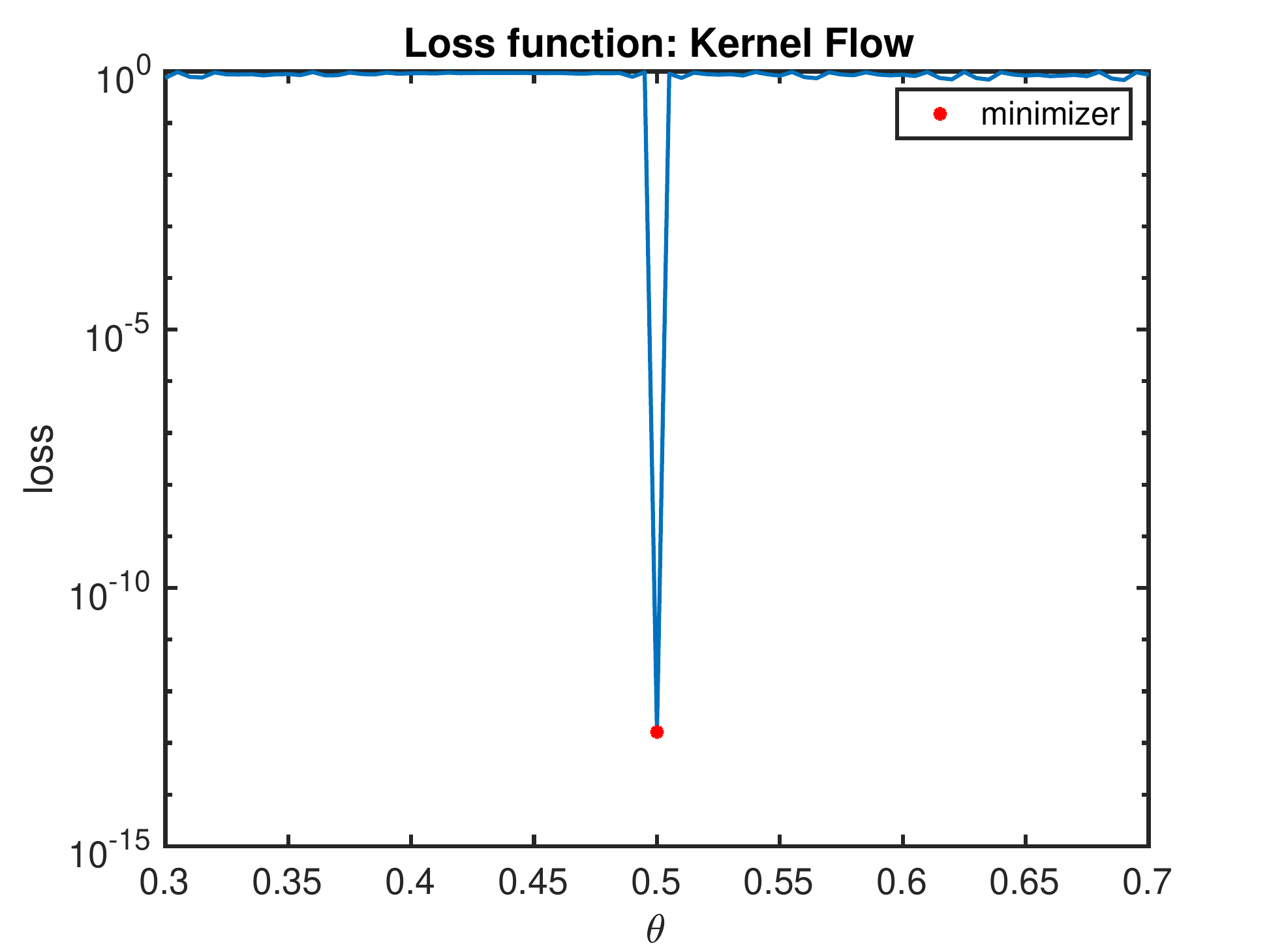}
    \caption{Loss function for estimating the discontinuity parameter under model misspecification. Left: EB; right: KF}
    \label{fig: loss function of estimator of discontinuity, mis-specification}
    \end{figure}

\subsection{Deterministic model}
 In this subsection, we consider the EB and KF estimators for the parameter $t$ 
 in the GP model $\cN(0,(-\Delta)^{-t})$ where $\Delta$ is equipped with  homogeneous Dirichlet boundary conditions on $[0,1]$. However, rather than choosing $\ud$ that is drawn from the GP $\cN(0,(-\Delta)^{-s})$ for some $s$ (as we did in Section \ref{sec: recover regularity}), we choose it  be the solution to the equation $(-\Delta)^{s} \ud(\cdot) =\delta(\cdot-1/2)$, i.e., $\ud$ is the Green function corresponding to the differential operator $(-\Delta)^{s}$ and evaluated at $y=1/2$. Since $\ud$ has no stochastic background, we understand this situation as a deterministic model misspecification.

We observe the value of $\ud$ on the $2^9$ equidistributed points of the total $2^{10}$ grid points used for discretization. We conduct numerical experiments to find the value of the EB and KF estimators. Our experiments show that the EB estimator returns $2s$ and the KF estimator returns $s$ for this one dimensional example. The loss function in the case  $s=1.2$ is shown in Figure \ref{fig: Loss function for estimating the regularity parameter under deterministic}.
\begin{figure}[ht]
    \centering
    \includegraphics[width=6cm]{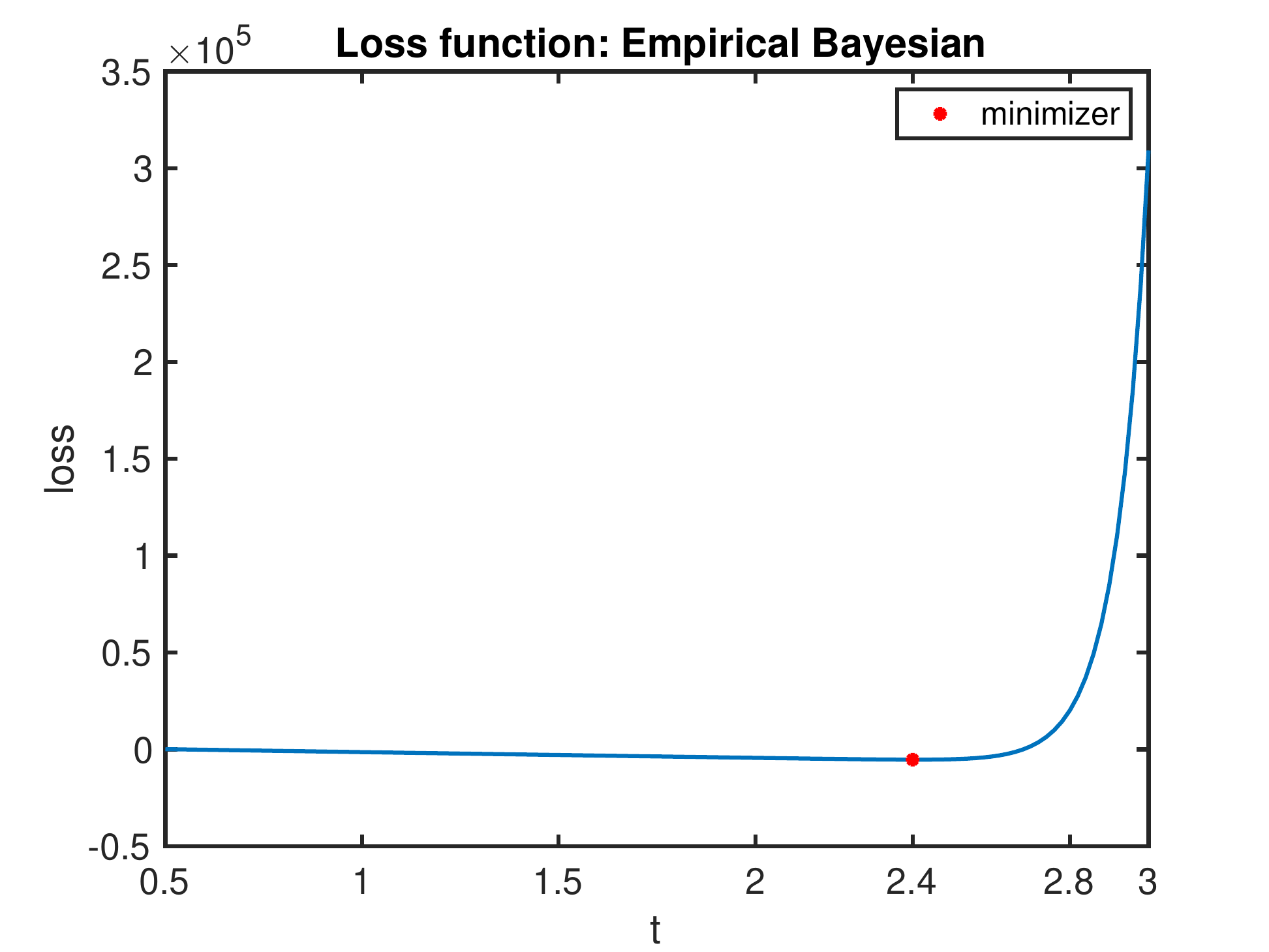}
    \includegraphics[width=6cm]{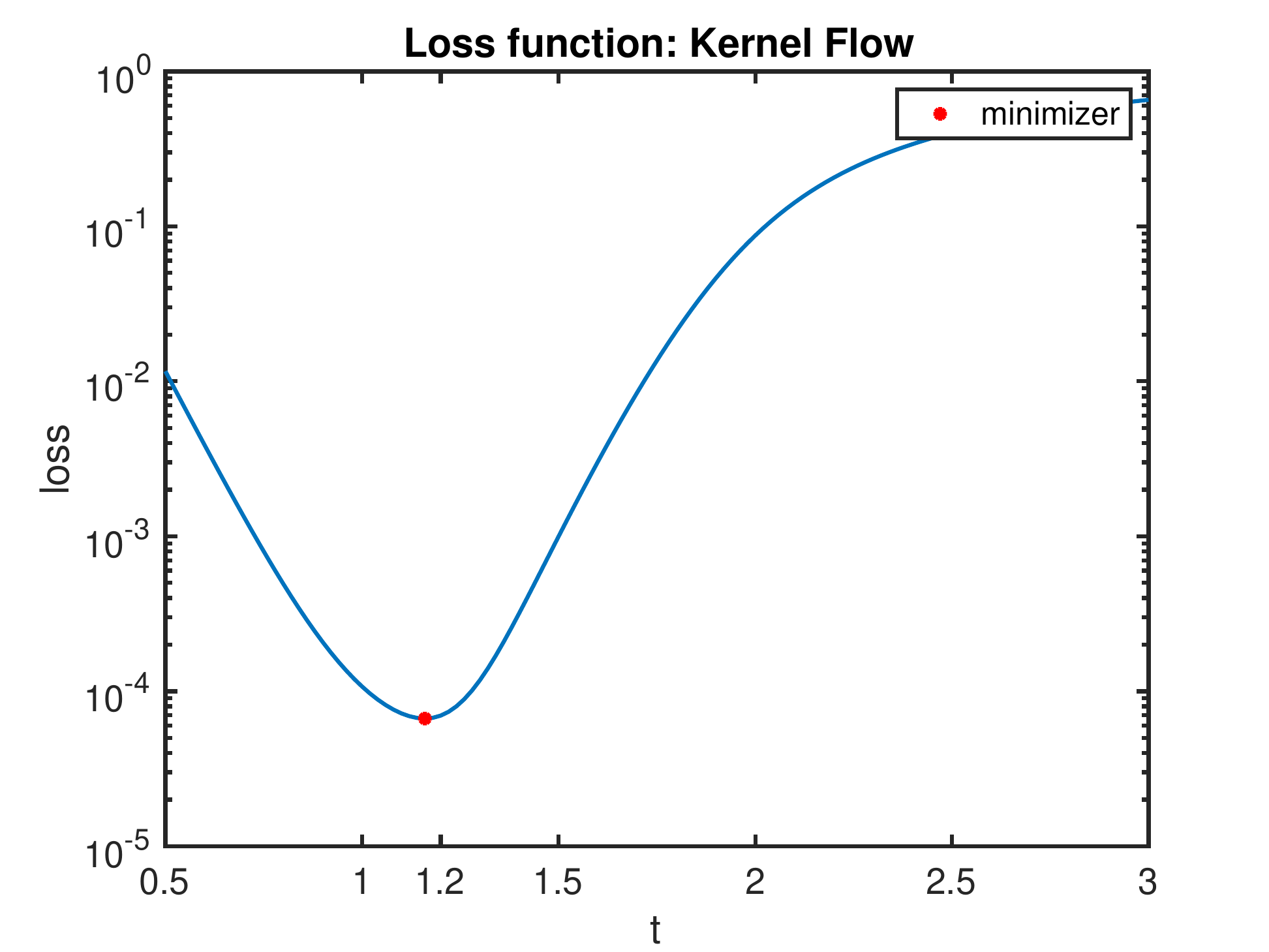}
    \caption{Loss function for estimating the regularity parameter under deterministic $\ud$. Left: EB; right: KF}
    \label{fig: Loss function for estimating the regularity parameter under deterministic}
\end{figure} 

We now describe some regularity considerations in order to understand the observed phenomenon. In this one dimensional example, $\delta(\cdot-1/2)$ belongs to $H^{\eta}([0,1])$ for any $\eta < -1/2$, so the solution $u \in H^{2s+\eta}([0,1])$ for any $\eta < -1/2$. It is of critical regularity $2s-1/2$, but this criticality is not homogeneous: it is caused by the presence of a singularity induced by the Dirac function.

 The discussion in Section \ref{sec: recover regularity} implies KF will recover $s-1/4$ while EB recovers $2s$ for a function with homogeneous critical regularity $2s-1/2$. However, the experiments here show that KF recovers $s$ while EB recovers $2s$, for this function with critical regularity $2s-1/2$; unlike the setting in Section \ref{sec: recover regularity}, here the ground truth lacks spatial homogeneity. This 
suggests that the KF estimator for the regularity parameter is sensitive
to whether the regularity of the target function is spatially homogeneous or not. This fact is not surprising, considering the vast literature on adaptive approximation for functions with singularities, which implies the presence of a singularity will exert considerable influence on the approximation error
resulting from minimizing the KF loss function. In this example, the optimal approximation in KF error comes at $t=s$. We can understand this phenomenon as follows.  Recall $\ud=(-\Delta)^{-s}\delta(\cdot-1/2)$. Using $\cN(0,(-\Delta)^{-t})$ in the GPR is equivalent to using the basis functions $\text{span}_{j \in J_q} \{(-\Delta)^{-t}\updelta(\cdot-x_j)\}$ (as in Section \ref{sec: set-up of recovering regularity}) with $x_i$ being the data points indexed by $j \in J_q$, to approximate $\ud$. When $t=s$ and one of the $x_j=1/2$, the ground truth will just be in the basis functions set, so it is straightforward to imagine $t=s$ leads to the smallest approximation error, and KF picks this value.
 
 We understand the fact that EB still picks $t=2s$ by making the following observation: there are only two terms in the EB loss function. The log determinant term remains the same for each $t$ when $\ud$ changes. For the norm term $\|u(\cdot,t,q)\|_t^2$, the blow-up rate depends on the regularity of $\ud$. Here, it makes no difference whether the regularity of $\ud$ is spatially homogeneous or not. 

{ \subsection{Discussions}
The above numerical experiments reveal complicated behavior of EB and KF with respect to model misspecification. In the second experiment, we found that KF is robust while EB is not, for a certain type of GP model misspecification. This appears natural since EB is based on probabilistic modeling whilst KF is purely based on approximation theoretic criteria. In Subsection \ref{sec: Stochastic model misspecification for recovering discontinuity} the prior used in EB is mutually singular with respect to the GP that $\ud$ is drawn from and it is not
suprising that EB is fragile. On the other hand, KF  does not require probabilistic modeling to motivate it,
and so its robustness to misspecifications behaves differently. Indeed, in the second experiment, the discontinuity point influences the approximation accuracy a lot, and even the kernel used in defining KF is misspecified, KF still succeeds in selecting the correct parameter,  as it focuses on the approximation accuracy rather than statistical inference.}

{
In the well-specified cases, e.g. experiments in Section \ref{sec: recover regularity}, EB outperforms KF in terms of the variance of estimators. Therefore, if $\ud$ is a random object and we know the prior correctly, then EB should be a preferable choice for estimating parameters. If this is not the case and misspecification occurs, EB might be vulnerable and KF could be a potential alternative.
}

\section{Concluding Remarks}
\label{sec: discussions}
In this paper, we have studied the Empirical Bayes and Kernel Flow 
approaches to hyperparameter learning. The first approach is based on statistical considerations, while the second approach originates from an approximation theoretic viewpoint. Their distinct objectives lead them to different behaviors and different interpretations of optimality.

For the Mat\'ern-like process model, we made a detailed theoretical study of the recovery of the regularity parameter. We proved the EB estimator converges to $s$, while the KF estimator converges to $\frac{s-d/2}{2}$, both results holding in probability in the large data limit if the regularity of the GP that $\ud$ draws from is $s$. Our experiments illustrate that, in terms of the $L^2$ error $\|u(\cdot,t,q)-\ud\|_0^2$, the parameter $t=\frac{s-d/2}{2}$ relates to the minimal $t$ that achieves the fast error rate while $t=s$ relates to the $t$ that achieves the smallest error, averaged over the GP $\ud \sim \cN(0,(-\Delta)^{-s})$. This demonstrates the different drivers that guide the EB and KF methods in selecting the parameters. The statistical and approximation theoretic principles behind them lead to the differences between them.

In the theoretical study, we developed a Fourier analysis toolkit for this problem, and as a byproduct, we showed the consistency of recovering $\sigma$ in the Mat\'ern-like process for the EB method. Recovery of the lengthscale parameter and recovery of several parameters simultaneously was studied via numerical experiments. It is of future interest to perform theoretical studies explaining these empirically observed phenomena. Furthermore, the theory in this paper is based on an equidistributed design for the data location, and the generalization to randomized design remains a potential further direction. Also, our focus in this paper is on the noiseless observation setting, and an extension to the noisy case is of future theoretical interest.

Our numerical experiments for additional well-specified and misspecified models extend the scope of this paper beyond the Mat\'ern-like kernels. Both the two estimators work very well in the well-specified models we consider;
we would like to explore this more in the future, both theoretically and numerically, potentially in more complex models that are present in machine learning. The variance and robustness of the estimators behave differently for the misspecified models. The variabilities in robustness are in line with our expectation since these estimators follow from different decision rules; these rules can vary considerably in sensitivity to model mismatches of different kinds. In practice, users should choose the correct approach to avoid high sensitivity to likely model errors present.

As a summary, this paper demonstrates some basic aspects of the difference between Bayesian and approximation theoretic approaches for hierarchical learning. Generally, it is of interest to study EB and KF for other types of models and to study other parameter selection criteria based on the two principles beyond EB and KF, such as a fully Bayesian approach or another choice of $\mathsf{d}$ for the approximation, and identify their pros and cons under different scenarios. We are interested in exploring the theoretical and practical performance of methods under such a framework, and we believe that a diversity in such methods will enable
users to deal with the model misspecification that is to be expected
in many applications.

\vspace{0.3in}

\noindent{\bf Acknowledgements} YC gratefully acknowledges the support of the Caltech Kortchack Scholar Program. HO gratefully acknowledges support from  AFOSR (grant FA9550-18-1-0271) and ONR (grant N00014-18-1-2363). AMS is grateful to AFOSR (grant FA9550-17-1-0185) and NSF (grant DMS 18189770) for financial support. 
YC, HO and AMS gratefully acknowledge support from AFOSR MURI (FA9550-20-1-0358).

\bibliographystyle{amsplain}

\begin{thebibliography}{10}
\bibitem{allen1974relationship}
D.~M. Allen.
\newblock The relationship between variable selection and data agumentation and
  a method for prediction.
\newblock {\em technometrics}, 16(1):125--127, 1974.

\bibitem{amari1999improving}
S.-i. Amari and S.~Wu.
\newblock Improving support vector machine classifiers by modifying kernel
  functions.
\newblock {\em Neural Networks}, 12(6):783--789, 1999.

\bibitem{bachoc2013cross}
F.~Bachoc.
\newblock Cross validation and maximum likelihood estimations of
  hyper-parameters of {Gaussian} processes with model misspecification.
\newblock {\em Computational Statistics \& Data Analysis}, 66:55--69, 2013.

\bibitem{bachoc2017cross}
F.~Bachoc, A.~Lagnoux, and T.~M.~N. Nguyen.
\newblock Cross-validation estimation of covariance parameters under
  fixed-domain asymptotics.
\newblock {\em Journal of Multivariate Analysis}, 160:42--67, 2017.
{\bibitem{bogachev1998gaussian}
V.~I.~Bogachev.
\newblock {\em Gaussian measures}.
\newblock {No. 62. American Mathematical Soc., 1998.}}

\bibitem{cortes2013learning}
C.~Cortes, M.~Kloft, and M.~Mohri.
\newblock Learning kernels using local rademacher complexity.
\newblock In {\em Advances in neural information processing systems}, pages
  2760--2768, 2013.

\bibitem{dashti2013bayesian}
M.~Dashti and A.~M. Stuart.
\newblock The {Bayesian} approach to inverse problems.
\newblock {\em arXiv preprint arXiv:1302.6989}, 2013.

\bibitem{de1994approximation}
C.~De~Boor, R.~A. DeVore, and A.~Ron.
\newblock Approximation from shift-invariant subspaces of $l^2(\mathbb{R}^d)$.
\newblock {\em Transactions of the American Mathematical Society},
  341(2):787--806, 1994.

\bibitem{dunlop2017hierarchical}
M.~M. Dunlop; T.~Helin; A.~M. Stuart
\newblock Hyperparameter Estimation in Bayesian MAP Estimation: Parameterizations and Consistency.
\newblock {\em The SMAI journal of computational mathematics}, 6:69--100, 2020.

\bibitem{geisser1975predictive}
S.~Geisser.
\newblock The predictive sample reuse method with applications.
\newblock {\em Journal of the American statistical Association},
  70(350):320--328, 1975.

\bibitem{ghosh2003bayesian}
J.~K. Ghosh and R.~Ramamoorthi.
\newblock {\em Bayesian Nonparametrics}.
\newblock Springer Science \& Business Media, 2003.

\bibitem{guttorp2006studies}
P.~Guttorp and T.~Gneiting.
\newblock Studies in the history of probability and statistics XLIX on the Mat{\'e}rn correlation family.
\newblock {\em Biometrika}, 93(4): 989--995, 2006.

\bibitem{hamzi2020learning}
Boumediene Hamzi and Houman Owhadi.
\newblock Learning dynamical systems from data: a simple cross-validation
  perspective.
\newblock {\em Physica D: Nonlinear Phenomena. arXiv preprint
  arXiv:2007.05074}, 2020.

\bibitem{he2016deep}
Kaiming He, Xiangyu Zhang, Shaoqing Ren, and Jian Sun.
\newblock Deep residual learning for image recognition.
\newblock In {\em Proceedings of the IEEE conference on computer vision and
  pattern recognition}, pages 770--778, 2016.

\bibitem{hjort2010bayesian}
N.~L. Hjort, C.~Holmes, P.~M{\"u}ller, and S.~G. Walker.
\newblock {\em Bayesian Nonparametrics}, volume~28.
\newblock Cambridge University Press, 2010.

\bibitem{hofmann2008kernel}
T.~Hofmann, B.~Sch{\"o}lkopf, and A.~J. Smola.
\newblock Kernel methods in machine learning.
\newblock {\em The Annals of Statistics}, pages 1171--1220, 2008.

\bibitem{jacot2018neural}
A.~Jacot, F.~Gabriel, and C.~Hongler.
\newblock Neural tangent kernel: Convergence and generalization in neural
  networks.
\newblock In {\em Advances in neural information processing systems}, pages
  8571--8580, 2018.

\bibitem{knapik2016bayes}
B.~T. Knapik, B.~Szab{\'o}, A.~W. Van Der~Vaart, and J.~Van~Zanten.
\newblock Bayes procedures for adaptive inference in inverse problems for the
  white noise model.
\newblock {\em Probability Theory and Related Fields}, 164(3-4):771--813, 2016.

\bibitem{knapik2011bayesian}
B.~T. Knapik, A.~W. Van Der~Vaart, J.~H. van Zanten.
\newblock Bayesian inverse problems with {Gaussian} priors.
\newblock {\em The Annals of Statistics}, 39(5):2626--2657, 2011.

\bibitem{kohavi1995study}
R.~Kohavi et~al.
\newblock A study of cross-validation and bootstrap for accuracy estimation and
  model selection.
\newblock In {\em Ijcai}, volume~14, pages 1137--1145. Montreal, Canada, 1995.

\bibitem{kohn1991performance}
R.~Kohn, C.~F. Ansley, and D.~Tharm.
\newblock The performance of cross-validation and maximum likelihood estimators
  of spline smoothing parameters.
\newblock {\em Journal of the american statistical association},
  86(416):1042--1050, 1991.

\bibitem{lee2019wide}
J.~Lee, L.~Xiao, S.~S. Schoenholz, Y.~Bahri, J.~Sohl-Dickstein, and
  J.~Pennington.
\newblock Wide neural networks of any depth evolve as linear models under
  gradient descent.
\newblock {\em arXiv preprint arXiv:1902.06720}, 2019.
{\bibitem{linkGFGMRF}
F.~Lindgren, H.~Rue, and J.~Lindstr\"om. 
\newblock An explicit link between Gaussian fields and Gaussian Markov random fields: the stochastic partial differential equation approach. 
\newblock {\em Journal of the Royal Statistical Society: Series B (Statistical Methodology)}, 73(4), 423-498.}

\bibitem{owhadi2020ideas}
Houman Owhadi.
\newblock Do ideas have shape? plato's theory of forms as the continuous limit
  of artificial neural networks.
\newblock {\em arXiv preprint arXiv:2008.03920}, 2020.

\bibitem{owhadi2019operator}
H.~Owhadi and C.~Scovel.
\newblock {\em Operator-Adapted Wavelets, Fast Solvers, and Numerical
  Homogenization: From a Game Theoretic Approach to Numerical Approximation and
  Algorithm Design}, volume~35.
\newblock Cambridge University Press, 2019.

\bibitem{owhadi2019kernel}
H.~Owhadi and G.~R. Yoo.
\newblock Kernel flows: From learning kernels from data into the abyss.
\newblock {\em Journal of Computational Physics}, 389:22--47, 2019.

\bibitem{perrin1999modelling}
O~Perrin and P~Monestiez.
\newblock Modeling of non-stationary spatial structure using parametric radial
  basis deformations.
\newblock In {\em {G}eoENV II—Geostatistics for Environmental Applications},
  pages 175--186. Springer, 1999.

\bibitem{rasmussen2003gaussian}
C.~E. Rasmussen.
\newblock Gaussian processes in machine learning.
\newblock In {\em Summer School on Machine Learning}, pages 63--71. Springer,
  2003.

\bibitem{ron_l2-approximation_1992}
A.~Ron.
\newblock The $l^2$-{Approximation} {Orders} of {Principal} {Shift}-{Invariant}
  {Spaces} {Generated} by a {Radial} {Basis} {Function}.
\newblock In {\em Numerical {Methods} in {Approximation} {Theory}, {Vol}. 9},
  pages 245--268. Birkhäuser Basel, Basel, 1992.


\bibitem{sampson1992nonparametric}
Paul~D Sampson and Peter Guttorp.
\newblock Nonparametric estimation of nonstationary spatial covariance
  structure.
\newblock {\em Journal of the American Statistical Association},
  87(417):108--119, 1992.

\bibitem{scheuerer2013interpolation}
M.~Scheuerer, R.~Schaback, and M.~Schlather.
\newblock Interpolation of spatial data--a stochastic or a deterministic
  problem?
\newblock {\em European Journal of Applied Mathematics}, 24(4):601--629, 2013.

\bibitem{schmidt2003bayesian}
Alexandra~M Schmidt and Anthony O'Hagan.
\newblock Bayesian inference for non-stationary spatial covariance structure
  via spatial deformations.
\newblock {\em Journal of the Royal Statistical Society: Series B (Statistical
  Methodology)}, 65(3):743--758, 2003.

\bibitem{stein1990comparison}
M.~L. Stein.
\newblock A comparison of generalized cross validation and modified maximum
  likelihood for estimating the parameters of a stochastic process.
\newblock {\em The Annals of Statistics}, pages 1139--1157, 1990.
{\bibitem{stein99book}
M.~L. Stein.
\newblock {\em Interpolation of spatial data: some theory for kriging.}
\newblock Springer, New York, 1999.}

\bibitem{stone1984asymptotically}
C.~J. Stone et~al.
\newblock An asymptotically optimal window selection rule for kernel density
  estimates.
\newblock {\em The Annals of Statistics}, 12(4):1285--1297, 1984.

\bibitem{stuart2018posterior}
A.~Stuart and A.~Teckentrup.
\newblock Posterior consistency for {Gaussian} process approximations of
  {Bayesian} posterior distributions.
\newblock {\em Mathematics of Computation}, 87(310):721--753, 2018.

\bibitem{teckentrup2019convergence}
A.~L. Teckentrup.
\newblock Convergence of {Gaussian} process regression with estimated
  hyper-parameters and applications in {Bayesian} inverse problems.
\newblock {\em arXiv preprint arXiv:1909.00232}, 2019.

\bibitem{van2006cross}
M.~J. van~der Laan, S.~Dudoit, and A.~W. van~der Vaart.
\newblock The cross-validated adaptive epsilon-net estimator.
\newblock {\em Statistics \& Decisions}, 24(3):373--395, 2006.

\bibitem{van1996weak}
A.~van~der Vaart and J.~A. Wellner.
\newblock {\em Weak Convergence And Empirical Processes}.
\newblock 1996.

\bibitem{van2006oracle}
A.~W. Van~der Vaart, S.~Dudoit, and M.~J. van~der Laan.
\newblock Oracle inequalities for multi-fold cross validation.
\newblock {\em Statistics \& Decisions}, 24(3):351--371, 2006.

\bibitem{wahba1980some}
G.~Wahba and J.~Wendelberger.
\newblock Some new mathematical methods for variational objective analysis
  using splines and cross validation.
\newblock {\em Monthly weather review}, 108(8):1122--1143, 1980.

\bibitem{warnes1987problems}
J.~Warnes and B.~Ripley.
\newblock Problems with likelihood estimation of covariance functions of
  spatial {Gaussian} processes.
\newblock {\em Biometrika}, 74(3):640--642, 1987.

\bibitem{wendland2004scattered}
H.~Wendland.
\newblock {\em Scattered data approximation}, volume~17.
\newblock Cambridge university press, 2004.

\bibitem{whittle1954stationary}
P.~Whittle.
\newblock On stationary processes in the plane.
\newblock {\em Biometrika}, 434--449, 1954.

\bibitem{wilson2016deep}
A.~G. Wilson, Z.~Hu, R.~Salakhutdinov, and E.~P. Xing.
\newblock Deep kernel learning.
\newblock In {\em Artificial Intelligence and Statistics}, pages 370--378,
  2016.

\bibitem{yang2007consistency}
Y.~Yang et~al.
\newblock Consistency of cross validation for comparing regression procedures.
\newblock {\em The Annals of Statistics}, 35(6):2450--2473, 2007.

\bibitem{ying1991asymptotic}
Z.~Ying.
\newblock Asymptotic properties of a maximum likelihood estimator with data
  from a gaussian process.
\newblock {\em Journal of Multivariate Analysis}, 36(2):280--296, 1991.

\bibitem{yoo2020deep}
G.~R. Yoo and H.~Owhadi.
\newblock Deep regularization and direct training of the inner layers of neural
  networks with kernel flows.
\newblock {\em arXiv preprint arXiv:2002.08335}, 2020.

\bibitem{zhang2010kriging}
H.~Zhang, Y.~Wang, et~al.
\newblock Kriging and cross-validation for massive spatial data.
\newblock {\em Environmetrics}, 21(3/4):290--304, 2010.

\end{thebibliography}

\section{Appendix: Proofs}
\label{sec: proof}
\subsection{Proof of Proposition \ref{prop: characterize F(t,q)}}
\label{Proof of Proposition prop: characterize F(t,q)}
\begin{proof}
        Let $\varphi_j(x)=(-\Delta)^{-t}\updelta(x-x_j)$ and in particular $\varphi_0(x)=(-\Delta)^{-t}\updelta(x)$. We have for $m \in \bZ^d$,
        \begin{equation*}
        \hat{\varphi}_0(m)=\begin{cases}
        (4\pi^2)^{-t}|m|^{-2t}, &m \neq 0\\
        0, &m=0\, .
        \end{cases}
        \end{equation*} 
        We introduce the translation operator $\tau_{j2^{-q}}$ which acts on function $u: \bT^d \to \bR$ and is defined by $$(\tau_{j2^{-q}}u)(x)=u(x_1-j_12^{-q},x_2-j_22^{-q},...,x_d-j_d2^{-q})$$ for $j=(j_1,j_2,...,j_d) \in \bZ^d$ and $x=(x_1,x_2,...,x_d) \in \bR^d$. Then, for $j \in J_q$, we have the relation $\updelta(\cdot-x_j)=\tau_{j2^{-q}}\updelta(\cdot)$. Using the property of the Fourier coefficients, we obtain
         $$\hat{\varphi}_j(m)=\hat{\varphi}_0(m)e^{-2\pi i\left<j2^{-q},m\right>}=
         \begin{cases}
         (4\pi^2)^{-t}|m|^{-2t}e^{-2\pi i\left<j2^{-q},m\right>}, &m \neq 0\\
         0, &m=0\, .
         \end{cases}$$
         By definition, $\hat{\cF}_{t,q}$ is the span of such $\hat{\varphi}_j$ for $j \in J_q$. Hence, for any $g \in \hat{\cF}_{t,q}$, it can be written as a linear combination of these functions. Equivalently, there exists a $2^q$-periodic function $p$ such that
         \begin{equation*}
         g(m)=\begin{cases}
         |m|^{-2t}p(m), &m \neq 0\\
         0, &m=0\, .
         \end{cases}
         \end{equation*} 
         This gives the desired representation of $g$.
            \end{proof}
        
    \subsection{Proof of Theorem \ref{thm: Fourier representation for Pu}}
        \label{Proof of Theorem thm: Fourier representation for Pu}
        \begin{proof}
        By Proposition \ref{prop: characterize F(t,q)}, there exists a $2^q$-periodic function $p_1(m)$ on $\bZ^d$, such that, 
        \[\hat{u}(m,t,q)=
        \begin{cases}
        |m|^{-2t}p_1(m), &m \neq 0\\
        0, &m=0\, .
        \end{cases}\]
        By the definition of GPR, we have $[\ud(\cdot)-u(\cdot,t,q),\updelta(\cdot-x_j)]=0$ for every data point $x_j$. In the Fourier domain, according to the characterization of $\hat{\cF}_{t,q}$, this orthogonality leads to
        \begin{equation}
        \label{eqn: u-Pu orthogonal Fourier domain}
        \sum_{m \in \bZ^d} (\hat{u}(m)-\hat{u}(m,t,q))p(m)=0
        \end{equation}
        for $p:\bZ^d \to \bC$ being any $2^q$-periodic function. 
        Recalling Definition \ref{def: periodization}, we have
        \begin{equation}
        \label{eqn:def of Tu}
        (T_q\hat{u})(m)=\sum_{\beta \in \bZ^d} \hat{u}(m+2^q\beta)\, .
        \end{equation}
        The fact that the above sum converges may
        be seen as a consequence of the Cauchy–Schwarz inequality and the regularity of $u$  (recall $t \geq d/2 +\delta$). 
        Using \eqref{eqn:def of Tu} and the representation of $\hat{u}(m,t,q)$, we reformulate \eqref{eqn: u-Pu orthogonal Fourier domain} as
        \[\sum_{m \in B_q^d} \left((T_q\hat{u})(m)-M_q^t(m)p_1(m)\right)p(m)=0\, . \]
        The above formula holds for any $2^q$-periodic function $p$. Let $g(m)=(T_q\hat{u})(m)-M_q^t(m)p_1(m)$, then we get that $g$ is a $2^q$-periodic function on $\bZ^d$ and that
        \[\sum_{m \in B_q^d} g(m)p(m)=0\]
        holds for any $2^q$-periodic function $p$. This implies that $g(m)=0$. Hence, we get
        \[p_1(m)=\frac{(T_q\hat{u})(m)}{M_q^t(m)}\, .\]
        Plugging this expression into the above representation formula for $\hat{u}(m,t,q)$ leads to
        \[\hat{u}(m,t,q)=
        \begin{cases}
        0, & \text{if} \ m = 0\\
        |m|^{-2t}\frac{(T_q\hat{u})(m)}{M_q^t(m)}, & \text{else}\, .
        \end{cases} \]
        This completes the proof.
    \end{proof}

    \subsection{Proof of Lemma \ref{lemma: estimate of the M t q term}}
    \label{Proof of Lemma lemma: estimate of the M t q term}
    \begin{proof}
    Recall the definition
        \begin{equation*}
         M_q^t(m):=
         \begin{cases} \sum_{\beta \in \bZ^d \backslash \{0\}} |2^q\beta|^{-2t}, & \text{if}  \ m=j\cdot2^q \ \text{for some} \ j \in \bZ^d\\
         \sum_{\beta \in \bZ^d} |m+2^{q}\beta|^{-2t}, & \text{else}\, .
         \end{cases}
         \end{equation*}
    Because of the periodicity of $M_q^t$, we need only to study  $m \in B_q^d$. We split it into two cases.
    \begin{enumerate}
        \item If $m=0$, then $M_q^t(m)=\sum_{\beta \in \bZ^d \backslash \{0\}} |2^q\beta|^{-2t}=2^{-2qt} \sum_{\beta \in \bZ^d \backslash \{0\}} |\beta|^{-2t} \simeq 2^{-2qt}$.
        \item If $m \in B_q^d \backslash \{0\}$, then $M_q^t(m)=\sum_{\beta \in \bZ^d} |m+2^{q}\beta|^{-2t}=|m|^{-2t}+\sum_{\beta \in \bZ^d\backslash \{0\}} |m+2^{q}\beta|^{-2t}$. Since $B_q^d=[-2^{q-1},2^{q-1}-1]^{\otimes d}$, each component of $m \in B_q^d$ is bounded by $2^{q-1}$ in amplitude, and therefore each component of $2^{-q}m$ is bounded by $1/2$ in amplitude. So, it follows that \[\sum_{\beta \in \bZ^d\backslash \{0\}} |m+2^{q}\beta|^{-2t}=2^{-2qt}\sum_{\beta \in \bZ^d\backslash \{0\}} |2^{-q}m+\beta|^{-2t}\simeq 2^{-2qt}\, . \]
    Then, we get $|m|^{-2t} \leq M_q^t(m) \lesssim |m|^{-2t}+2^{-2qt} \lesssim |m|^{-2t}$ where we have used the fact that $|m| \lesssim 2^{q}$. Therefore, it holds that $M_q^t(m) \simeq |m|^{-2t}$.
    \end{enumerate}
     As a byproduct of the above proof, we also get $M_q^t(m)-|m|^{-2t} \simeq 2^{-2qt}$.
    \end{proof}
    
    \subsection{Proof of Lemma \ref{lemma: uniform convergence of series}}
    \begin{proof}
    \label{Proof of Lemma lemma: uniform convergence of series}
        First, we prove the pointwise convergence (i.e., for each fixed $r$), then move on to prove uniform convergence. To achieve this, we calculate the variance:
        \begin{align*}
        \mathrm{Var}(\alpha(r,q))&\simeq 2^{-2rq}\sum_{m \in B_q^d \backslash \{0\}} |m|^{2r-2d}\\ &\lesssim 2^{-2rq} \int_1^{2^q} x^{2r-2d+d-1}\, \rd x=2^{-2rq} \int_1^{2^q} x^{2r-d-1}\, \rd x\, .
        \end{align*}
        For $r = d/2$, the integral gives $\log (2^q)=q\log 2$; for $r \neq d/2$, it is $\frac{1}{2r-d}(2^{q(2r-d)}-1)$. In both cases, we have $\lim_{q \to \infty} \mathrm{Var}(\alpha(r,q))=0$. Thus, $\alpha(r,q)$ converges in $L^2$ to the limit of its expectation, which we may
        calculate as follows:
        \[\lim_{q\to \infty}\bE \alpha(r,q)=\lim_{q \to \infty} \sum_{m \in B_q^d \backslash \{0\}} (2^{-q})^d|2^{-q}m|^{r-d}=\int_{[-1/2,1/2]^d} |y|^{r-d}\, \rd y:=\gamma(r)>0 \, .  \]
        Hence, we get $\lim_{q\to \infty} \alpha(r,q) = \gamma(r)>0$ in $L^2$ for every $r \in [\epsilon,1/\epsilon]$, and the convergence also holds in probability. We may now proceed to show uniform convergence. We rely on Exercise 3.2.3 in \cite{van1996weak}. Based on that, it suffices to prove $\alpha(r,q)$ is uniformly Lipschitz continuous as a function of $r$ for $q \in \bN$. Pick any $r_1,r_2 \in [\epsilon,1/\epsilon]$, then
        \begin{align*}
        &|\alpha(r_1,q)-\alpha(r_2,q)|\\
        =&\sum_{m \in B_q^d \backslash \{0\}} 2^{-qd}|(|2^{-q}m|^{r_1-d}-|2^{-q}m|^{r_2-d})|\\
        \leq& \sum_{m \in B_q^d \backslash \{0\}} 2^{-qd}|r_1-r_2|(|2^{-q}m|^{\epsilon-d}+|2^{-q}m|^{1/\epsilon-d})|\log (2^{-q}m)|\xi_m^2\, ,
        \end{align*} 
        where in the last step we have used the fact that $||2^{-q}m|^{r_1-d}-|2^{-q}m|^{r_2-d}|=||2^{-q}m|^{\eta-d} \log (2^{-q}m)(r_1-r_2)|$ for some $\eta$ that lies between $r_1$ and $r_2$, and we use the bound $r_1,r_2 \in [\epsilon,1/\epsilon]$. Now, we define the random series:
        \[\mathsf{L}(q):= 2^{-qd}\sum_{m \in B_q^d \backslash \{0\}} (|2^{-q}m|^{\epsilon-d}+|2^{-q}m|^{1/\epsilon-d})|\log (2^{-q}m)|\xi_m^2\, . \]
        We calculate its variance as follows:
        \begin{align*}
        \mathrm{Var}(\mathsf{L}(q))&\simeq 2^{-2dq}\sum_{m \in B_q^d \backslash \{0\}} (|2^{-q}m|^{2\epsilon-2d}+|2^{-q}m|^{2/\epsilon-2d})\log^2 |2^{-q}m|\\
        &\lesssim 2^{-qd} \left(\int_{2^{-q}}^1 t^{2\epsilon-2d+d-1} \log^2 t \, \rd t + \int_{2^{-q}}^1 t^{2/\epsilon-2d+d-1} \log^2 t \, \rd t \right)\\
        &= 2^{-qd} \int_{2^{-q}}^1 (t^{2\epsilon-d-1}+t^{2/\epsilon-d-1}) \log^2 t\, \rd t\, ,\\
        &\lesssim 2^{-qd} \int_{2^{-q}}^1 (t^{\epsilon-d-1}+t^{1/\epsilon-d-1})\, \rd t \lesssim 2^{-q\epsilon}\, .
        \end{align*}
        The last term will go to $0$ as $q$ goes to infinity. Thus, $\mathsf{L}(q)$ converges in $L^2$ (and thus in probability) to $\mathsf{L}^*=\lim_{q\to \infty} \bE \mathsf{L}(q)$, which is
        \begin{align*}
        \lim_{q \to \infty} \bE \mathsf{L}(q)&= \lim_{q \to \infty} 2^{-qd}\sum_{m \in B_q^d \backslash \{0\}} (|2^{-q}m|^{\epsilon-d}+|2^{-q}m|^{1/\epsilon-d})\log^2 |2^{-q}m|\\
        &=\int_{[-1/2,1/2]^d}(|y|^{\epsilon-d}+|y|^{1/\epsilon-d})\log^2|y|\, \rd y\\
        &\lesssim \int_{[-1/2,1/2]^d}(|y|^{\epsilon/2-d}+|y|^{1/(2\epsilon)-d})\, \rd y < \infty \, .
        \end{align*}
        Using Markov's inequality we deduce that, for any $\epsilon' > 0$, it holds that
        \[\bP(|\mathsf{L}(q)-\mathsf{L}^*|\geq \epsilon')\leq \frac{\bE |\mathsf{L}(q)-\mathsf{L}^*|^2}{(\epsilon')^2}\leq \frac{2^{-q\epsilon}}{(\epsilon')^2}\, . \]
        Thus,
        \[\sum_{q=1}^{\infty} \bP(|\mathsf{L}(q)-\mathsf{L}^*|\geq \epsilon')\leq \sum_{q=1}^{\infty}\frac{2^{-q\epsilon}}{(\epsilon')^2} < \infty\, . \]
        From the Borel-Cantelli lemma it follows that $\lim_{q \to \infty} \mathsf{L}(q)=\mathsf{L}^*$ almost surely, and therefore $\mathsf{L}(q)$ is bounded uniformly for $q$ almost surely. Since $|\alpha(r_1,q)-\alpha(r_2,q)|\leq \mathsf{L}(q)|r_1-r_2|$, it follows that $\alpha(r,q)$ is uniformly Lipschitz continuous as a function of $r$ for $q \in \bN$. Invoking Exercise 3.2.3 in \cite{van1996weak} concludes this case.\\
        For the case $r=0$, we follow the same strategy as in the previous case. First, we calculate the corresponding variance:
        \begin{align*}
        \mathrm{Var}(\alpha(0,q))&\simeq\frac{1}{q^2} \sum_{m \in B_q^d \backslash \{0\}} |m|^{-2d}\\
        &\lesssim \frac{1}{q^2} \int_1^{2^{q}} x^{-2d+d-1}\, \rd x \lesssim \frac{1}{q^2}
        \end{align*}
        where the last term goes to $0$ as $q$ goes to infinity. Then, we calculate the expectation:
        \begin{align*}
        \bE \alpha(0,q)=\frac{1}{q} \sum_{m \in B_q^d \backslash \{0\}} |m|^{-d}\, .
        \end{align*}
        The limit when $q \to \infty$ is identified through the following
        calculations:
        \begin{align*}
            \lim_{q \to \infty} \frac{1}{q} \sum_{m \in B_q^d \backslash \{0\}} |m|^{-d} &= \lim_{q \to \infty} \sum_{m \in B_{q+1}^d \backslash B_q^d} |m|^{-d}\\
            &=\lim_{q \to \infty} 2^{-qd} \sum_{m \in B_{q+1}^d \backslash B_q^d} |2^{-q}m|^{-d}\\
            &=\int_{[-1,1]^d\backslash[-1/2,1/2]^d} |x|^{-d}\, \rd x < \infty;
        \end{align*}
        here we have used the definition of the Riemann integral. Finally, we conclude that $\lim_{q \to \infty} \alpha(0,q)=\gamma(0)$ in probability for $\gamma(0) \in (0,\infty)$.
    \end{proof}
    
    \subsection{Proof of Proposition \ref{prop: bound on the log term}}
    \label{Proof of Proposition prop: bound on the log term}
        \begin{proof}
        First, we have the relation \[\log \det K(t,q)=\log \det K(t,q-1)+ \log \det  (K(t,q)/ K(t,q-1))\] where $K(t,q)/ K(t,q-1)$ is the Schur complement of $K(t,q-1)$ in $K(t,q)$. Due to the variational property of the Schur complement (see Lemma 13.24 in \cite{owhadi2019operator}), the smallest and largest eigenvalues of $K(t,q)/ K(t,q-1)$ satisfy (in the dual norm $\|\cdot\|_{-t}$)
        \begin{equation} 
        \label{eqn: Schur complement variational}
        \begin{aligned}
        \lambda_{\min}(K(t,q)/ K(t,q-1))&\geq \inf_{y \in \bR^{|J_q|}}\frac{\|\sum_{j\in J_q} y_j \updelta(x-x_j)\|_{-t}^2}{|y|^2}\, ,\quad \text{and}\\
        \lambda_{\max}(K(t,q)/ K(t,q-1))&\\
        =\sup_{y \in \bR^{|J_q|}} \inf_{z \in \bR^{|J_{q-1}|}} & \frac{\|\sum_{j\in J_q} y_j \updelta(x-x_j)-\sum_{j' \in J_{q-1}} z_{j'} \updelta(x-x_{j'})\|_{-t}^2}{|y|^2}\, .
        \end{aligned}
        \end{equation}
        These two formulae will be crucial in the subsequent 
        analysis.
        We start by estimating the smallest and largest eigenvalues of the Schur complement. Let $w=(-\Delta)^{-t}\sum_{j\in J_q} y_j \updelta(x-x_j)$, whose Fourier coefficients are
        \begin{equation}
        \hat{w}(m)=\begin{cases}
        0, & \text{if} \ m = 0\\
        (4\pi^2)^{-t}|m|^{-2t}g(m)
        , & \text{else} \, ,
        \end{cases}
        \end{equation}
        where, the function $g(m)$ is defined by
        \begin{equation}
        g(m)=\sum_{j \in J_q} y_j \exp(2\pi i\langle j2^{-q}, m \rangle)\, .
        \end{equation}
        For the smallest eigenvalue, we write
        \begin{align*}
        \|\sum_{j\in J_q} y_j \updelta(x-x_j)\|_{-t}^2=\|w\|_t^2 &=(4\pi^2)^{t}\sum_{m \in \bZ^d \backslash \{0\}} |m|^{2t}|\hat{w}(m)|^2\\
        &=(4\pi^2)^{-t}\sum_{m \in \bZ^d \backslash \{0\}} |m|^{-2t}|g(m)|^2.
        \end{align*}
        Notice that 
        \[\sum_{m \in \bZ^d \backslash \{0\}} |m|^{-2t}|g(m)|^2 = \sum_{m \in B_q^d} M_q^t(m)|g(m)|^2 \gtrsim 2^{-2tq} \sum_{m \in B_q^d} |g(m)|^2\] and
        \begin{equation}
        \label{eqn: summation of g(m)}
        \begin{aligned}
        \sum_{m \in B_q^d} |g(m)|^2&=\sum_{m \in B_q^d} |\sum_{j \in J_q} y_j \exp(2\pi i\langle j2^{-q}, m \rangle |^2\\
        &=\sum_{m \in B_q^d} \sum_{j \in J_q} \sum_{l \in J_q} y_jy_l \exp(2\pi i\langle (j-l)2^{-q}, m \rangle\\
        &=\sum_{j \in J_q} \sum_{l \in J_q} y_jy_l\sum_{m \in B_q^d}\exp(2\pi i\langle (j-l)2^{-q}, m \rangle\\
        &\simeq 2^{qd}|y|^2\, .
        \end{aligned}
        \end{equation}
      In the last line we have used the fact that
        \begin{equation*}
        \sum_{m \in B_q^d}\exp(2\pi i\langle (j-l)2^{-q}, m \rangle=\begin{cases}
        0, & \text{if} \ j-l\neq 0\\
        \sum_{m \in B_q^d}1 \simeq 2^{qd}
        , & \text{if}\ j-l=0 \, .
        \end{cases}
        \end{equation*}
        Thus, combining the above results, we obtain the bound on the smallest eigenvalue
        \[\lambda_{\min}(K(t,q)/ K(t,q-1))\gtrsim 2^{-q(2t-d)}\, . \]
        We then move to consider the largest eigenvalue. First, notice that
        \[  \inf_{z \in \bR^{|J_{q-1}|}}\|\sum_{j\in J_q} y_j \updelta(x-x_j)-\sum_{j' \in J_{q-1}} z_{j'} \updelta(x-x_{j'})\|_{-t}^2=\inf_{v \in \cF_{t,q-1}} \|w-v\|_t^2\, . \]
        Naturally, one can express the optimal $v$ in the above variational formulation using the Fourier series representation explained before. However, this will lead to many interactions between different frequencies. To make the analysis cleaner, we adopt another strategy. We first approximate the function $w$ by a band-limited function, whose projection into $\cF_{t,q-1}$ will be more concise. Precisely, define a band limited version of $w$, written as $w_h$, by
        \begin{equation}
        \hat{w}_h(m)=
        \begin{cases}
        \hat{w}(m), & \text{if} \ m \in B_{q-1}^d\\
        0
        , & \text{if}\ m \in (B_{q-1}^d)^c \, .
        \end{cases}
        \end{equation}
        To estimate $\inf_{v \in \cF_{t,q-1}} \|w-v\|_t^2$, we follow the two steps below:

         \textit{Step 1}: we prove $\|w-w_h\|_t^2 \lesssim 2^{-q(2t-d)}|y|^2$. Let us calculate the quantity directly:
        \begin{align*}
        \|w-w_h\|_t^2&=(4\pi^2)^{-t}\sum_{m \in (B_{q-1}^d)^c}|m|^{-2t}|g(m)|^2\\
        &=(4\pi^2)^{-t}\left(\sum_{m \in \bZ^d\backslash \{0\}} |m|^{-2t}|g(m)|^2 - \sum_{m \in B_{q-1}^d} |m|^{-2t}|g(m)|^2\right)\\
        &=(4\pi^2)^{-t}\left(\sum_{m \in B_q^d} M_q^t(m)|g(m)|^2 - \sum_{m \in B_{q-1}^d} |m|^{-2t}|g(m)|^2\right)\\
        &\lesssim 2^{-2qt} \sum_{m \in B_q^d} |g(m)|^2 \lesssim 2^{-q(2t-d)}|y|^2.
        \end{align*}
        Here we have used the fact that $M_q^t(m)-|m|^{-2t} \lesssim 2^{-2qt}$ for $m \in B_{q-1}^d$ and $M_q^t(m)\lesssim 2^{-2qt}$ for $m \in B_q^d \backslash B_{q-1}^d$, according to the results in Lemma \ref{lemma: estimate of the M t q term}. In the last line, the bound \eqref{eqn: summation of g(m)} is applied.
        
       \textit{Step 2}: We prove $\inf_{v \in \cF_{t,q-1}} \|w_h-v\|_t^2 \lesssim 2^{-q(2t-d)}|y|^2$. Based on Theorem \ref{thm: Fourier representation for Pu}, we know the optimal $v$ for this variational problem has the Fourier coefficients
        \[\hat{v}(m)=
        \begin{cases}
        0, & \text{if} \ m = 0\\
        |m|^{-2t}\frac{(T_{q-1}\hat{w}_h)(m)}{M_{q-1}^t(m)}, & \text{else}\, .
        \end{cases} \]
        Then, using the Fourier representation of the norm, we get
        \begin{align*}
        &\|w_h-v\|_t^2\\
        \simeq&\sum_{m \in \bZ^d \backslash \{0\}} |m|^{2t}|\hat{w}_h(m)-\hat{v}(m)|^2\\
        =&\sum_{m \in B_{q-1}^d \backslash \{0\}} |m|^{-2t}|g(m)-\frac{(T_{q-1}\hat{w}_h)(m)}{M_{q-1}^t(m)}|^2+\sum_{m \in (B_{q-1}^d)^c} |m|^{-2t}|\frac{(T_{q-1}\hat{w}_h)(m)}{M_{q-1}^t(m)}|^{2}\, .
        \end{align*}
        For the first term, since $w_h$ is band-limited, we know if $m \in B_{q-1}^d \backslash \{0\}$, then $(T_{q-1}\hat{w}_h)(m)=|m|^{-2t}g(m)$. Thus, we can write this term as
        \begin{align*}
        &\sum_{m \in B_{q-1}^d \backslash \{0\}} |m|^{-2t}|g(m)|^2\Bigl(1-\frac{|m|^{-2t}}{M_{q-1}^t(m)}\Bigr)^2\\
        =&\sum_{m \in B_{q-1}^d \backslash \{0\}} |m|^{-2t}|g(m)|^2\Bigl(\frac{M_{q-1}^t(m)-|m|^{-2t}}{M_{q-1}^t(m)}\Bigr)^2\\
        \overset{a)}{\lesssim}& \sum_{m \in B_{q-1}^d \backslash \{0\}} |m|^{-2t}|g(m)|^2\Bigl(\frac{2^{-4tq}}{|m|^{-4t}}\Bigr) \\
        \overset{b)}{\lesssim}& \sum_{m \in B_{q-1}^d \backslash \{0\}} 2^{-2tq}|g(m)|^2 \lesssim 2^{-q(2t-d)}|y|^2
        \end{align*}
        where in $a)$, we have used the fact that $M_{q-1}^t(m)-|m|^{-2t}\simeq 2^{-2tq}$ and $M_{q-1}^t(m)\simeq |m|^{-2t}$ for $m \in B_{q-1}^d \backslash \{0\}$ based on Lemma \ref{lemma: estimate of the M t q term}. In $b)$, we have used $|m|\lesssim 2^q$. The last inequality is obtained by recalling \eqref{eqn: summation of g(m)}.\\
        For the second term, we write
        \begin{align*}
        &\sum_{m \in (B_{q-1}^d)^c} |m|^{-2t}|\frac{(T_{q-1}\hat{w}_h)(m)}{M_{q-1}^t(m)}|^{2}\\
        =&\sum_{m \in \bZ^d \backslash \{0\}} |m|^{-2t}|\frac{(T_{q-1}\hat{w}_h)(m)}{M_{q-1}^t(m)}|^{2} - \sum_{m \in B_{q-1}^d\backslash \{0\}} |m|^{-2t}|\frac{(T_{q-1}\hat{w}_h)(m)}{M_{q-1}^t(m)}|^{2}\\
        \overset{c)}{=}&\sum_{m \in B_{q-1}^d\backslash \{0\}}(M_{q-1}^t(m)-|m|^{-2t})|\frac{(T_{q-1}\hat{w}_h)(m)}{M_{q-1}^t(m)}|^{2}\\
        =&\sum_{m \in B_{q-1}^d\backslash \{0\}}(M_{q-1}^t(m)-|m|^{-2t})|\frac{|m|^{-2t}g(m)}{M_{q-1}^t(m)}|^{2}\\
        \lesssim &\, 2^{-2tq} \sum_{m \in B_{q-1}^d\backslash \{0\}} |g(m)|^2 \lesssim 2^{-q(2t-d)}|y|^2\, ,
        \end{align*}
        where in $c)$, we have used the periodicity of the function $\frac{(T_{q-1}\hat{w}_h)(m)}{M_{q-1}^t(m)}$. 
        
        Now, combining Step 1 and 2 leads to the conclusion
        \[ \inf_{v \in \cF_{t,q-1}} \|w-v\|_t^2 \lesssim 2^{-q(2t-d)}|y|^2\, , \]
        and in particular, it implies 
         \[\lambda_{\max}(K(t,q)/ K(t,q-1))\lesssim 2^{-q(2t-d)}\, . \]
        As a consequence of the upper and lower bounds for the eigenvalues of the matrix $K(t,q)/ K(t,q-1)$, we deduce that they are all on the scale of $2^{-q(2t-d)}$. Let $C$ be a constant independent of $t,q$ such that $C^{-1}2^{-q(2t-d)} \preceq K(t,q)/ K(t,q-1) \preceq C2^{-q(2t-d)} $. Then,
        \begin{align*}
        (2^{qd}-2^{(q-1)d})((2t-d)(-q)\log 2-C)&\leq \log \det K(t,q)/ K(t,q-1) \\
        &\leq (2^{qd}-2^{(q-1)d})((2t-d)(-q)\log 2+C) \, .   
        \end{align*}
        Using the implied bounds on the recursion relation, we get    
        \[(2t-d)g_1(q)-Cg_2(q)+K(t,0)\leq \log \det K(t,q) \leq (2t-d)g_1(q)+Cg_2(q)+K(t,0) \, , \]
        where $g_1(q)=\sum_{k=1}^q (2^{kd}-2^{(k-1)d})(-k\log 2)$ and $g_2(q)=(2^{qd}-1)(2t-d)$. Summing the series in $g_1(q)$ leads to $g_1(q)\simeq -q2^{qd}\log2 \simeq -q2^{qd}$. The proof of Proposition \ref{prop: bound on the log term} is completed.
        \begin{remark}
         {The above technique of using the Schur complements is quite general and could be potentially applied to other operators such as heterogeneous Laplacians; see \cite{owhadi2019operator}. However, for the homogeneous Laplacian on the torus in this paper, we may also prove the result via a simpler approach. The key observation is that there is an explicit formula for the spectrum of $K(t,q)$, as also
         exploited in \cite[Sec.~6.7]{stein99book}. Indeed, using the formula for the spectrum given in Lemma \ref{lem:add} below, we get
        \begin{equation*}
            \begin{aligned}
                \log \det K(t,q) &= \sum_{m \in B_q^d} \log \left(2^{qd}(4\pi^2)^{-t}M_q^t(m)\right)\\
                & = qd2^{qd}\log 2 -2^{qd}t\log(4\pi^2) + \sum_{m \in B_q^d} \log M_q^t(m)\, .
            \end{aligned}
        \end{equation*}
        By Lemma \ref{lemma: estimate of the M t q term}, it holds that
        \begin{equation*}
        M_q^t(m)\simeq \begin{cases}
        2^{-2qt},\ \text{if} \ m = 0\\
        |m|^{-2t},\  \text{if} \ m \in B_q^d \backslash \{0\}\, .
        \end{cases}
    \end{equation*}
    That is, there exists a constant $C$ independent of $t$ such that \[-2t\log |m| - \log C \leq \log M_q^t(m)\leq -2t\log|m| + \log C\] for $m \in B_q^d \backslash \{0\}$, and $-2^{qt}\log 2 - \log C \leq \log M_q^t(0)\leq  -2^{qt}\log 2 + \log C$. Since \[\sum_{m \in B_q^d \backslash \{0\}} \log |m| \simeq \int_0^{2^q} r^{d-1}\log r\, \rd r \simeq q2^{qd}\, , \] 
    and $2^{qd} =o(q2^{qd})$, we get \[-(2t-d)q2^{qd} -C2^{qd}\lesssim \log \det K(t,q)\lesssim  -(2t-d)q2^{qd} +C2^{qd}\, .\]
    This completes the alternative proof of Proposition \ref{prop: bound on the log term}.}
    \end{remark}
    \end{proof}
    
    \begin{lemma} \label{lem:add}
    {The eigenvalues of $K(t,q)$ are 
        $2^{qd}(4\pi^2)^{-t}M_q^t(m)$ for $m \in B_q^d$, where $M_q^t(m)$ is defined in \eqref{eqn: def of M_q^t(m)}, with the corresponding eigenfunctions $\phi_m(\cX_q) \in \bR^{2^{qd}}$.
        }
        \end{lemma}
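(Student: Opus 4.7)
The plan is to verify directly that the candidate vectors $\phi_m(\cX_q)$, $m \in B_q^d$, are eigenvectors of $K(t,q)$ with the claimed eigenvalues, and that they span the whole space. The key inputs are the Mercer expansion \eqref{eqn:mercer decomposition} specialized to $K_{\theta}(x,y)=[\updelta(\cdot-x),(-\Delta)^{-t}\updelta(\cdot-y)]$, namely
\begin{equation*}
K(t,q)_{jl} = \sum_{n \in \bZ^d \setminus\{0\}}(4\pi^2)^{-t}|n|^{-2t}\, e^{2\pi i \langle n, x_j - x_l\rangle},
\end{equation*}
together with the standard orthogonality of characters on the lattice $\{x_l = l\cdot 2^{-q}: l \in J_q\}$: for any $\mu \in \bZ^d$,
\begin{equation*}
\sum_{l \in J_q} e^{2\pi i \langle \mu, x_l\rangle} =
\begin{cases} 2^{qd}, & \mu \equiv 0 \pmod{2^q},\\ 0, & \text{otherwise.}\end{cases}
\end{equation*}

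First I would apply $K(t,q)$ to $\phi_m(\cX_q)=(e^{2\pi i \langle m, x_j\rangle})_{j \in J_q}$ and interchange the two finite/absolutely convergent sums (justified by $t>d/2$):
\begin{equation*}
[K(t,q)\phi_m(\cX_q)]_j = \sum_{n \in \bZ^d\setminus\{0\}}(4\pi^2)^{-t}|n|^{-2t} e^{2\pi i \langle n, x_j\rangle}\sum_{l \in J_q} e^{2\pi i \langle m-n, x_l\rangle}.
\end{equation*}
The inner sum is nonzero only when $n \equiv m \pmod{2^q}$, in which case it equals $2^{qd}$. Writing $n = m + 2^q\beta$ and using $e^{2\pi i \langle 2^q\beta, x_j\rangle}=e^{2\pi i \langle \beta, j\rangle}=1$, the outer sum collapses to $e^{2\pi i \langle m, x_j\rangle}$ times $\sum_{\beta}|m+2^q\beta|^{-2t}$, where the constraint $n\neq 0$ forces the exclusion $\beta\neq 0$ exactly when $m=0$. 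Matching \eqref{eqn: def of M_q^t(m)}, this gives
\begin{equation*}
[K(t,q)\phi_m(\cX_q)]_j = 2^{qd}(4\pi^2)^{-t}M_q^t(m)\,[\phi_m(\cX_q)]_j,
\end{equation*}
proving the eigenvalue relation.

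Finally I would verify completeness: by the same character orthogonality, $\langle \phi_m(\cX_q), \phi_{m'}(\cX_q)\rangle = 2^{qd}\delta_{m,m'}$ for $m,m' \in B_q^d$, and $|B_q^d|=2^{qd}$ equals the dimension of the ambient space, so $\{\phi_m(\cX_q)\}_{m\in B_q^d}$ is an orthogonal eigenbasis. The main conceptual subtlety (though not technically hard) is the bookkeeping around the zero mode: the omission of $n=0$ from Mercer's sum translates precisely into the $\beta\neq 0$ case in the definition of $M_q^t(0)$, and this is what makes the lemma match the $M_q^t$ used throughout the proofs. Since $\phi_m$ takes complex values, one should also remark that the real and imaginary parts yield real eigenvectors with the same eigenvalues, so the spectrum of the real matrix $K(t,q)$ is as stated.
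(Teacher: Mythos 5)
Your proof is correct and follows essentially the same route as the paper's: both start from the Mercer expansion of $K_{\theta}$, fold the lattice sum via $n=m+2^q\beta$ using the periodicity $\phi_{m+2^q\beta}(x_j)=\phi_m(x_j)$, and invoke character orthogonality on $\cX_q$ to identify the eigenvalue $2^{qd}(4\pi^2)^{-t}M_q^t(m)$. Your extra observations (the completeness/orthogonality count $|B_q^d|=2^{qd}$, the zero-mode bookkeeping, and the remark on real eigenvectors) are correct refinements but do not change the underlying argument.
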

        \begin{proof} {We can prove this claim using Mercer's decomposition as follows.  First, for $x_i,x_j \in
        \cX_q$, it holds that
        \begin{equation*}
        \begin{aligned}
        K(t,q)_{i,j} &= \sum_{m \in \bZ^d \backslash \{0\}} (4\pi^2)^{-t}|m|^{-2t} \phi_m(x_i)\phi^*_m(x_j)  \\
        & = \sum_{m \in B_q^d} (4\pi^2)^{-t} M_q^t(m) \phi_m(x_i)\phi^*_m(x_j)
        \end{aligned}
        \end{equation*}
        where we have used the fact that $\phi_{m+2^q\beta}(x_i) = \phi_m(x_i)$ for any $\beta \in \bZ^d$ and $x_i \in \cX_q$. Thus, for every $n \in B_q^d$, we get
        \begin{equation*}
        \begin{aligned}
            \sum_{x_j \in \cX_q} K(t,q)_{i,j}\phi_n(x_j)&= \sum_{m \in B_q^d} (4\pi^2)^{-t} M_q^t(m) \phi_m(x_i)\sum_{x_j \in \cX_q} \phi^*_m(x_j)\phi_n(x_j)\\
            & = \sum_{m \in B_q^d} (4\pi^2)^{-t} M_q^t(m) \phi_m(x_i) 2^{qd}\updelta_{mn}\\
            & = 2^{qd}(4\pi^2)^{-t}M_q^t(m) \phi_n(x_i)\, ,
            \end{aligned}
        \end{equation*}
        where in the second equality we used the property of Fourier series.
        This implies $\phi_n(\cX_q)$ is an eigenfunction. The proof of the lemma is completed.}
        \end{proof}

    \subsection{Proof of Theorem \ref{thm: Consistency of Empirical Bayesian estimator}}
    \label{Proof of Theorem thm: Consistency of Empirical Bayesian estimator}
    \begin{proof}
        Recall the definition, \[s^{\text{EB}}(q)=\argmin_t \mathsf{L}^{\text{EB}}(t,q):=\|u(\cdot,t,q)\|_t^2 + \log \det K(t,q)\, . \]
        Define a rescaled version of the loss function by
        \[\tilde{L}_{\mathrm{EB}}(t,q)=\frac{1}{|g_1(q)|}\mathsf{L}^{\text{EB}}(t,q)=\underbrace{\frac{1}{|g_1(q)|}\|u(\cdot,t,q)\|_t^2}_{\circled{1}} + \underbrace{\frac{1}{|g_1(q)|}\log \det K(t,q)}_{\circled{2}}\, .\] 
        
        We note that by Proposition \ref{prop: bound on the log term}, we have $|g_1(q)|\sim q2^{qd}$. Now, we estimate the growth rate of $\circled{1}$ and $\circled{2}$ separately. From Proposition \ref{prop: |u(x,t,q)|^2 bound} and \ref{prop: bound on the log term}, we get
        \[\circled{1} \simeq \underbrace{\frac{1}{q}2^{-q(2s-2t+d)}\xi_0^2}_{\circled{3}}+\underbrace{\frac{1}{q}2^{-q(2s-2t)}\sum_{m \in B_q^d \backslash \{0\}} 2^{-q(2t-2s+d)}|m|^{2t-2s}\xi_m^2}_{\circled{4}}\, , \]
        and for the $\log \det$ part, it holds that
        \[d-2t+\frac{-Cg_2(q)+K(t,0)}{|g_1(q)|}\leq \circled{2}\leq d-2t+\frac{Cg_2(q)+K(t,0)}{|g_1(q)|}\, . \]
        It follows that $\lim_{q \to \infty}\circled{2}=d-2t$. Thus, our remaining task is to analyze terms $\circled{3}$, $\circled{4}$ in $\circled{1}$. We split the problem into four cases. \\
        \textit{Case 1}: $t=s$. It is easy to see $\lim_{q\to\infty}\circled{3}=0$ and 
        \begin{align*}
        \circled{4}=\frac{1}{q}2^{-qd} \sum_{m \in B_q^d \backslash \{0\}} \xi_m^2 =\frac{1}{q}\alpha(d,q)\, ,
        \end{align*} 
        so that $\lim_{q\to\infty}\circled{4}=0$. Here we use the definition of $\alpha$ in Lemma \ref{lemma: uniform convergence of series}. Therefore, $\lim_{q \to \infty} \tilde{L}_{\mathrm{EB}}(s,q)=d-2s$.\\
        \textit{Case 2}: $1/\delta\geq t\geq s+\epsilon$. We have $\circled{3}\geq 0$. The term \circled{4} can be written as
        \begin{align*}
        \circled{4}=\frac{1}{q2^{-q(2t-2s)}}\alpha(2t-2s+d,q)\, ,
        \end{align*}
        where we recall the definition of the function $\alpha$ in Lemma \ref{lemma: uniform convergence of series}. 
        According to this lemma, we get the uniform convergence \[\lim_{q\to \infty} \alpha(2t-2s+d,q)=\gamma(2t-2s+d) > 0\] in probability. In the meantime, $\lim_{q \to \infty} q2^{-q(2t-2s)}=0$. So, $\lim_{q \to \infty}  \circled{4} = \infty$ in probability, and uniformly in $ 1/\delta\geq t\geq s+\epsilon$. In terms of $\tilde{L}_{\mathrm{EB}}(t,q)$, this corresponds to $\lim_{q \to \infty}  \tilde{L}_{\mathrm{EB}}(t,q) = \infty$.\\
        \textit{Case 3}: $s-\epsilon \geq t\geq s-d/2+\epsilon$. In this case, $2t-2s+d \geq \epsilon$ so Lemma \ref{lemma: uniform convergence of series} can be applied. We write the term
        \begin{align*}
        \circled{4}=\frac{2^{-q(2s-2t)}}{q}\alpha(2t-2s+d,q)\, .
        \end{align*}
        This will converge to $0$ as $q$ goes to infinity, since $\lim_{q \to \infty}\frac{2^{-q(2s-2t)}}{q} =0 $ and $\lim_{q \to \infty}\alpha(2t-2s+d,q)=\gamma(2t-2s+d)\in (0,\infty)$. The term \circled{3} also converges to $0$. Thus, $\lim_{q \to \infty}  \tilde{L}_{\mathrm{EB}}(t,q) = d-2t$ in probability, and uniformly for $s-\epsilon \geq t\geq s-d/2+\epsilon$. \\
        \textit{Case 4}: $s-d/2+\epsilon \geq t \geq d/2+\delta$. We still have that \circled{3} converges to $0$. For term \circled{4}, we have
        \[\circled{4}=\frac{2^{-qd}}{q}\sum_{m \in B_q^d\backslash \{0\}} |m|^{2t-2s}\xi_m^2 \leq \frac{2^{-qd}}{q}\sum_{m \in B_q^d\backslash \{0\}} |m|^{2(s-d/2+\epsilon)-2s}\xi_m^2\, \]
        where we have used the monotonicity of the function $|m|^{2t-2s}$ with respect to $t$. Then, it reduces to the case $t=s-d/2+\delta$, which is covered by Case 3. Hence, we have $\lim_{q\to \infty} \circled{4}=0$ uniformly for $s-d/2+\delta \geq t \geq d/2+\delta$. Therefore, we get $\lim_{q \to \infty}  \tilde{L}_{\mathrm{EB}}(t,q) = d-2t$ in probability, and uniformly for $s-d/2+\delta \geq t \geq d/2+\delta$.
        
        Let us make a summary of the arguments above. We have established that, for any small $\epsilon > 0$, $\lim_{q \to \infty} \tilde{L}_{\mathrm{EB}}(t,q) =\infty$ uniformly for $1/\delta\geq t\geq s+\epsilon$, and $\lim_{q \to \infty} \tilde{L}_{\mathrm{EB}}(t,q) =d-2t$ uniformly for $s-\epsilon\geq t\geq d/2+\delta$, and $\lim_{q \to \infty} \tilde{L}_{\mathrm{EB}}(s,q) =d-2s$. All the convergence is in probability. Note that $s^{\mathrm{EB}}$ is the minimizer of $L_{\mathrm{EB}}(t,q)$, hence also of $\tilde{L}_{\mathrm{EB}}(t,q)$. The above convergence results for $\tilde{L}_{\mathrm{EB}}(t,q)$ imply that  $s^{\mathrm{EM}}\in (s-\epsilon,s+\epsilon)$ with probability $1$ as $q$ goes to infinity, for any $\epsilon>0$. Thus, we must have
        \[\lim_{q \to \infty} s^{\mathrm{EB}}(q)=s\, . \]
        The proof is complete.
    \end{proof}
    \subsection{Proof of Proposition \ref{prop: Lower bound on the interaction term}} \label{Proof of Proposition prop: Lower bound on the interaction term} \begin{proof}
    In order to write the interaction terms as a random series with some desired independence pattern for the random variables involved, we need to consider the geometry of the lattice carefully. We introduce another set $S_q:=\{m \in \bZ: -2^{q-2}\leq m \leq 3\times2^{q-2}-1\}$ and let $S_q^d=S_q\otimes S_q\otimes \cdots\otimes S_q$ denote the tensor product of $d$ multiples of $S_q$. The set $S_q$ is a shift of $B_q$, and $S_q^d$ is a shift of $B_q^d$.
    
        Define the set $B_{q-1}^d+2^{q-1}k:=\{m+2^{q-1}k:m\in B_{q-1}^d\}$ for $k \in \bZ^d$. We have the relation 
        \[S_q^d=\bigcup_{k \in \bZ^d_2}(B_{q-1}^d+2^{q-1} k)\]
        where $\bZ^d_2=\{0,1\}^d$. Note that for $k_1\neq k_2$, the intersection between  $B_{q-1}^d+2^{q-1}k_1$ and $B_{q-1}^d+2^{q-1}k_2$ is empty.
        
        Using \eqref{eqn: uq-uq-1 representation} and the periodicity of the functions involved, we get
        \begin{align*}
        &\|u(\cdot,t,q)-u(\cdot,t,q-1)\|_t^2\\
        =&(4\pi^2)^t\sum_{m \in B_q^d} M_q^t(m)\left(\frac{T_q\hat{u}(m)}{M_q^t(m)}-\frac{T_{q-1}\hat{u}(m)}{M_{q-1}^t(m)}\right)^2\\
        =&(4\pi^2)^t\sum_{m \in S_q^d} M_q^t(m)\left(\frac{T_q\hat{u}(m)}{M_q^t(m)}-\frac{T_{q-1}\hat{u}(m)}{M_{q-1}^t(m)}\right)^2\\
        =&(4\pi^2)^t\sum_{k \in \bZ_2^d}\sum_{m \in (B_{q-1}^d+2^{q-1}k)} M_q^t(m)\left(\frac{T_q\hat{u}(m)}{M_q^t(m)}-\frac{T_{q-1}\hat{u}(m)}{M_{q-1}^t(m)}\right)^2\, .
        \end{align*}
        Recall the relation
        \[T_{q-1}\hat{u}(m)=\sum_{l \in \bZ^d_2} T_q\hat{u}(m+2^{q-1}l)\, , \]
        based on which we get
        \begin{align*}
        &\frac{T_q\hat{u}(m)}{M_q^t(m)}-\frac{T_{q-1}\hat{u}(m)}{M_{q-1}^t(m)}\\
        =&(\frac{1}{M_q^t(m)}-\frac{1}{M^t_{q-1}(m)})T_q\hat{u}(m)-\frac{1}{M^t_{q-1}}\sum_{l \in \bZ_2^d \backslash \{0\}} T_q \hat{u}(m+2^{q-1}l)\, .
        \end{align*}
        Since $\ud \sim \cN(0,(-\Delta)^{-s})$, it holds $\hat{u}(m) \sim \cN(0,(4\pi^2)^{-s}|m|^{-2s})$. Moreover, for different $m$, these Gaussian random variables are independent from each other. Thus, for a fixed $k$ and for $m \in (B_{q-1}^d+2^{q-1}k)$, the Gaussian random variables
        \[\frac{T_q\hat{u}(m)}{M_q^t(m)}-\frac{T_{q-1}\hat{u}(m)}{M_{q-1}^t(m)} \]
        are independent from each other. Furthermore, by calculating their variance, we can write
        \begin{align*}
        &M_q^t(m)\left(\frac{T_q\hat{u}(m)}{M_q^t(m)}-\frac{T_{q-1}\hat{u}(m)}{M_{q-1}^t(m)}\right)^2\\=&
        (4\pi^2)^{-s}\left[(\frac{1}{M_q^t(m)}-\frac{1}{M^t_{q-1}(m)})^2M_q^t(m)M_q^s(m)+\frac{M_q^t(m)}{(M_{q-1}^t(m))^2}\sum_{l \in \bZ^d_2 \backslash \{0\}} M_q^s(m+2^{q-1}l)\right]\xi_{k,m}^2\\
        =&(4\pi^2)^{-s}\left[\frac{M_q^s(m)(M_q^t(m)-M_{q-1}^t(m))^2}{M_q^t(m)(M_{q-1}^t(m))^2}+\frac{M_q^t(m)}{(M_{q-1}^t(m))^2}\sum_{l \in \bZ^d_2 \backslash \{0\}} M_q^s(m+2^{q-1}l)\right]\xi_{k,m}^2\\
        =&:A_{k,m}\xi_{k,m}^2
         \end{align*}
        where $\{\xi_{k,m}\}_m$ are independent unit scalar Gaussian random variables. Clearly, we have the lower bound
        \[A_{k,m}\geq (4\pi^2)^{-s} \frac{M_q^t(m)}{(M_{q-1}^t(m))^2}M_q^s(m-2^{q-1}k)\, .\] 

        Thus, denoting $e_1=(1,0,...,0)\in \bZ^d$, we get
        \begin{align*}
        &\|u(\cdot,t,q)-u(\cdot,t,q-1)\|_t^2\\
        \geq &(4\pi^2)^{t-s}\sum_{k \in \bZ_2^d}\sum_{m \in (B_{q-1}^d+2^{q-1}k)}  \frac{M_q^t(m)}{(M_{q-1}^t(m))^2} M_q^s(m-2^{q-1}k)\xi_{k,m}^2\\
        \geq& (4\pi^2)^{t-s}\sum_{m \in (B_{q-1}^d+2^{q-1}e_1)} \frac{M_q^t(m)}{(M_{q-1}^t(m))^2} M_q^s(m-2^{q-1}e_1)\xi_{e_1,m}^2\\
        =&  (4\pi^2)^{t-s}\sum_{m \in (B_{q-1}^d+2^{q-1}e_1)} \frac{M_q^t(m)}{(M_{q-1}^t(m-2^{q-1}e_1))^2} M_q^s(m-2^{q-1}e_1)\xi_{e_1,m}^2\\
        \gtrsim& \sum_{m \in (B_{q-1}^d \backslash \{0\}+2^{q-1}e_1)} \frac{2^{-2qt}}{|m-2^{q-1}e_1|^{-4t}}|m-2^{q-1}e_1|^{2s} \xi_{e_1,m}^2\\
        =&\sum_{m \in B_{q-1}^d \backslash \{0\}}2^{-2qt}|m|^{4t-2s}\xi_{e_1,m+2^{q-1}e_1}^2\, .
        \end{align*}
        In the above derivation, we have used the fact that for $m \in B_{q-1}^d$, it holds that $M_q^s(m)\simeq |m|^{-2s}, M_{q-1}^t(m)\simeq |m|^{-2t}$, and in particular, $M_q^t(m)\simeq |m|^{-2t} \simeq 2^{-2qt}$ for $m \in (B_{q-1}^d \backslash \{0\}+2^{q-1}e_1)$. Renaming the subscripts in $\xi_{e_1, m+2^{q-1}e_1}$ completes the proof.
    \end{proof}
    
    \subsection{Proof of Proposition \ref{prop: Upper bound on the interaction term}}
    \label{Proof of Proposition prop: Upper bound on the interaction term}
    \begin{proof}
        We need to upper bound $A_{k,m}$ for $k \in \bZ^d_2, m \in B_{q-1}^d+2^{q-1}k$, which is defined in the proof of Proposition \ref{prop: Lower bound on the interaction term}. First, we have 
        \[ \sum_{l \in \bZ^d_2 \backslash \{0\}} M_q^s(m+2^{q-1}l)=M_{q-1}^s(m)-M_q^s(m)\, , \]
        and the estimate $0\leq M_{q-1}^t(m)-M_{q}^t(m)\leq M_{q-1}^t(m)$ for any $d/2+\delta\leq t \leq 1/\delta$. Based on this observation, for $k \in \bZ^d \backslash \{0\}$ and $m \in B_{q-1}^d\backslash \{0\}+2^{q-1}k$, we have the bound
        \begin{align*}
        A_{k,m}&\lesssim \frac{M_q^s(m)}{M_q^t(m)}+M_q^t(m)\frac{M_{q-1}^s(m)}{(M_{q-1}^t(m))^2}\\
        &\lesssim 2^{-q(2s-2t)}+2^{-2tq}|m-2^{q-1}k|^{4t-2s}
        \end{align*}
        where we have used the fact that for $m \in B_{q-1}^d\backslash \{0\}+2^{q-1}k$, it holds that $M_q^s(m)\simeq 2^{-2sq}, M_q^t(m)\simeq 2^{-2tq}, M^s_{q-1}(m)\simeq |m-2^{q-1}k|^{-2s}, M^t_{q-1}(m)\simeq |m-2^{q-1}k|^{-2t}$, according to Lemma \ref{lemma: estimate of the M t q term}. For $m=2^{q-1}k$, we get $A_{k,m} \lesssim 2^{-q(2s-2t)}$. So in general, we can write $A_{k,m}\lesssim 2^{-q(2s-2t)}+2^{-2tq}|m-2^{q-1}k|^{4t-2s}$ for $m \in B_{q-1}^d+2^{q-1}k$ where we use the convention that $|m|^{\alpha} = 0$ for $m=0$ and any $\alpha \in \bR$ to make the notation more compact.
        
        When $k=0$, using Lemma \ref{lemma: estimate of the M t q term} again, we get for $m \in B_{q-1}^d \backslash \{0\}$,
        \begin{align*}
        A_{k,m}&\lesssim \frac{M_q^s(m)(M_q^t(m)-M_{q-1}^t(m))^2}{M_q^t(m)(M_{q-1}^t(m))^2}+\frac{M_q^t(m)}{(M_{q-1}^t(m))^2}(M_{q-1}^s(m)-M_q^s(m))\\
        &\lesssim \frac{|m|^{-2s}2^{-4tq}}{|m|^{-6t}}+\frac{|m|^{-2t}}{|m|^{-4t}}2^{-2sq}\\
        &=|m|^{6t-2s}2^{-4tq}+|m|^{2t}2^{-2sq}\\
        &\lesssim 2^{-2tq}|m|^{4t-2s}+2^{-q(2s-2t)}\, ,
        \end{align*}
        where in the last line we used the relation $|m|\lesssim 2^{q}$. For $m=0$, based on the above calculation, we can get $A_{k,m} \lesssim 2^{-q(2s-2t)}$. Thus, generally, we can write $A_{k,m}\lesssim 2^{-2tq}|m|^{4t-2s}+2^{-q(2s-2t)}$ for $m \in B_{q-1}^d$ by using the notational convention above.
        
        Combining these estimates, we arrive at
        \begin{align*}
        &\|u(\cdot,t,q)-u(\cdot,t,q-1)\|_t^2\\
        \lesssim& \sum_{k \in \bZ_2^d} \sum_{m \in (B_{q-1}^d+2^{q-1}k)} A_{k,m}\xi_{k,m}^2\\
        \lesssim& \sum_{k \in \bZ_2^d} \sum_{m \in (B_{q-1}^d+2^{q-1}k)} (2^{-q(2s-2t)}+2^{-2tq}|m-2^{q-1}k|^{4t-2s})\xi_{k,m}^2\\
        =&\sum_{k \in \bZ_2^d} \sum_{m \in B_{q-1}^d} (2^{-q(2s-2t)}+2^{-2tq}|m|^{4t-2s})\xi_{k,m+2^{q-1}k}^2 \, .
        \end{align*}
        After a change of notation, we get the desired estimate.
    \end{proof}
    \subsection{Proof of Theorem \ref{thm: Consistency of the Kernel Flow estimator}}
    \label{Proof of Theorem thm: Consistency of the Kernel Flow estimator}
    \begin{proof}
        Recall \[s^{\text{KF}}(q)=\argmin_{t \in [d/2+\delta,1/\delta]} \mathsf{L}^{\text{KF}}(t,q):= \frac{\|u(\cdot,t,q)-u(\cdot,t,q-1)\|_t^2}{\|u(\cdot,t,q)\|_t^2}\, . \]
        We analyze the denominator and numerator separately. We start with the numerator. Let
        \[V_1(t,q)=\frac{1}{q}2^{q(s-d/2)} \|u(\cdot,t,q)-u(\cdot,t,q-1)\|_t^2\, . \]
        \textit{Case 1}: $t=\frac{s-d/2}{2}$. We derive an upper bound on $V_1$. By Proposition \ref{prop: Upper bound on the interaction term},
        \[\|u(\cdot,t,q)-u(\cdot,t,q-1)\|_t^2 \lesssim \sum_{k \in \bZ_2^d} \sum_{m \in B_{q-1}^d} (2^{-q(2s-2t)}+2^{-2tq}|m|^{4t-2s})\xi_{k,m}^2 \, .\]
        Take $t=\frac{s-d/2}{2}$. For each $k \in \bZ_2^d$, consider the term 
        \begin{align*}
        V_1^k(t,q)&=\frac{1}{q}2^{q(s-d/2)}\sum_{m \in B_{q-1}^d} (2^{-q(2s-2t)}+2^{-2tq}|m|^{4t-2s})\xi_{k,m}^2\\
        &=\frac{1}{q}2^{q(s-d/2)}\sum_{m \in B_{q-1}^d} (2^{-q(s+d/2)}+2^{-q(s-d/2)}|m|^{-d})\xi_{k,m}^2\\
        &=\frac{1}{q} \sum_{m \in B_{q-1}^d} (2^{-qd}+|m|^{-d})\xi_{k,m}^2\\
        &\lesssim \frac{1}{q} \sum_{m \in B_{q-1}^d} |m|^{-d}\xi_{k,m}^2\, .
        \end{align*}
        By Lemma \ref{lemma: uniform convergence of series}, $\lim_{q \to \infty} \frac{1}{q} \sum_{m \in B_{q-1}^d} |m|^{-d}\xi_{k,m}^2 = \gamma(0) \in (0,\infty)$. Thus, $V_1^k(t,q)$ remains bounded for $q \in \bN$. Since $V_1(t,q)=\sum_{k \in \bZ_2^d} V_1^k(t,q)$, it follows that $V_1(t,q)$ remains bounded for $q \in \bN$, in the case $t=\frac{s-d/2}{2}$. \\
        \textit{Case 2}: $1/\delta \geq t \geq \frac{s-d/2}{2}+\epsilon$. We provide a lower bound of $V_1$ here.
        Using Proposition \ref{prop: Lower bound on the interaction term}, we get
        \begin{align*}
        V_1(t,q)&\gtrsim \frac{1}{q}2^{q(s-d/2)} \sum_{m \in B_{q-1}^d \backslash \{0\}}2^{-2tq}|m|^{4t-2s}\xi_{m}^2\\
        &=\frac{1}{q}2^{q(s-d/2-2t)}\sum_{m \in B_{q-1}^d \backslash \{0\}}|m|^{4t-2s}\xi_{m}^2\\
        &=\frac{1}{q}2^{q(s-d/2-2t)}2^{(q-1)(4t-2s+d)}\cdot \left(2^{-(q-1)(4t-2s+d)}\sum_{m \in B_{q-1}^d \backslash \{0\}}|m|^{4t-2s}\xi_{m}^2\right)\\
        &=\frac{1}{q}2^{(q/2-1)(4t-2s+d)}\alpha(4t-2s+d,q-1)\, .
        \end{align*}
        By Lemma \ref{lemma: uniform convergence of series}, $\lim_{q \to \infty} \alpha(4t-2s+d,q-1)= \gamma(4t-2s+d) > 0$ uniformly for $1/\delta \geq t \geq \frac{s-d/2}{2}+\epsilon$. Since $\lim_{q \to \infty}\frac{1}{q}2^{(q/2-1)(4t-2s+d)}=\infty$, we get $\lim_{q \to \infty} V_1(t,q)=\infty$ and its growth rate is  $\gtrsim \frac{1}{q}2^{(q/2-1)(4t-2s+d)}$.
        \\ \textit{Case 3}: $\frac{s-d/2}{2}-\epsilon \geq t \geq d/2+\delta$. We provide a lower bound on $V_1$ here.
        Similarly to our analysis in Case 2, we have
        \begin{align*}
        V_1(t,q)&\gtrsim \frac{1}{q}2^{q(s-d/2-2t)}\sum_{m \in B_{q-1}^d \backslash \{0\}}|m|^{4t-2s}\xi_{m}^2\\
        & \gtrsim \frac{1}{q}2^{q(s-d/2-2t)} \xi_1^2\, .
        \end{align*}
        Then, it holds that
        \begin{align*}
        \bP(\frac{1}{q}2^{q(s-d/2-2t)} \xi_1^2\geq 2^{q(s-d/2-2t)/2})=\bP(\xi_1^2 \geq q 2^{-q(s-d/2-2t)/2}) \to 1
        \end{align*}
        as $q \to \infty$. Thus, we get $\lim_{q \to \infty} V_1(t,q)=\infty$ uniformly for this range of $t$ and the growth rate is  $\gtrsim 2^{q(s-d/2-2t)/2}$. We have finished the  analysis of the numerator. Now we proceed to analyze the denominator, which comprises the norm term. From Proposition \ref{prop: |u(x,t,q)|^2 bound}, we have
        \begin{equation}
            \label{eqn: KF proof, numerator}
            \|u(\cdot,t,q)\|_t^2 \simeq 2^{-q(2s-2t)}\xi_0^2+\sum_{m \in B_q^d \backslash \{0\}} |m|^{2t-2s}\xi_m^2\, ,
        \end{equation}
        where $\{\xi_m \}_{m \in B_q^d}$ are independent unit scalar Gaussian random variables. Recall that our final target in this theorem is to show that, for any $\epsilon >0$,
        \[\lim_{q \to \infty} \bP[s^{\mathrm{KF}}(q) \in (\frac{s-d/2}{2}-\epsilon,\frac{s-d/2}{2}+\epsilon)] = 1\, . \]
        Let $I_{\epsilon}=[d/2+\delta,1/\delta]/[\frac{s-d/2}{2}-\epsilon,\frac{s-d/2}{2}+\epsilon]$. By rewriting the loss function, it suffices to show
        \[\lim_{q \to \infty} \bP[ \frac{V_1(\frac{s-d/2}{2},q)}{\|u(\cdot,\frac{s-d/2}{2},q)\|_{\frac{s-d/2}{2}}^2}\geq \inf_{t \in I_{\epsilon}}\frac{V_1(t,q)}{\|u(\cdot,t,q)\|_t^2}]=0\, . \]
        Let us write 
        \begin{equation}
        \label{eqn: def of r(t,q)}
        r(t,q)=\frac{V_1(t,q)}{V_1(\frac{s-d/2}{2},q)}\cdot \frac{\|u(\cdot,\frac{s-d/2}{2},q)\|_{\frac{s-d/2}{2}}^2}{\|u(\cdot,t,q)\|_t^2}\, ,  
        \end{equation}
        then all we need is to show
        \[\lim_{q \to \infty} \bP[\inf_{t \in I_{\epsilon}} r(t,q)\leq 1] = 0\, . \]
        For $t \in I_{\epsilon}^1=[d/2+\delta,\frac{s-d/2}{2}-\epsilon]$, according to the analysis for the numerator, we have that for some constant $C$ independent of $q$,
        \begin{equation}
        \label{eqn: proof KF V1(t,q)}
        \lim_{q \to \infty} \bP[\inf_{t \in I_{\epsilon}^1} \frac{V_1(t,q)}{2^{q(s-d/2-2t)/2}}\geq C]=1\, ,    
        \end{equation}
        and also, $V_1(\frac{s-d/2}{2},q)$ remains uniformly bounded for $q \in \bN$. Furthermore, the equation \eqref{eqn: KF proof, numerator} implies the following relation: 
        \begin{equation}
        \label{eqn: proof KF bound on norm term}
        \inf_{t \in I_{\epsilon}^1} \frac{\|u(\cdot,\frac{s-d/2}{2},q)\|_{\frac{s-d/2}{2}}^2}{\|u(\cdot,t,q)\|_t^2} \gtrsim 1\, ,   
        \end{equation}
        due to the inequality $t \leq \frac{s-d/2}{2}-\epsilon$. 
        Combining the above two estimates in \eqref{eqn: proof KF V1(t,q)}\eqref{eqn: proof KF bound on norm term}, and recalling the expression for $r(t,q)$ in \eqref{eqn: def of r(t,q)}, we get 
        \begin{equation}
        \label{eqn: KF proof bound on r(t,q) 1}
            \lim_{q\to \infty} \bP[\inf_{t \in I_{\epsilon}^1} r(t,q) \leq 1] = 0\, .
        \end{equation}
        Then, let $I_{\epsilon}^2=[\frac{s-d/2}{2}+\epsilon,1/\delta]$. We also need to show $\lim_{q\to \infty} \bP[\inf_{t \in I_{\epsilon}^2} r(t,q) \leq 1] = 0$, or equivalently, 
        \[\lim_{q \to \infty} \bP [\frac{\|u(\cdot,\frac{s-d/2}{2},q)\|_{\frac{s-d/2}{2}}^2}{V_1(\frac{s-d/2}{2},q)} \leq \sup_{t \in I_{\epsilon}^2}\frac{\|u(\cdot,t,q)\|_t^2}{V_1(t,q)} ]=0\, . \]
        Since $V_1(\frac{s-d/2}{2},q)$ remains bounded according to the result in the above Case 1, it suffices to show 
        \[\lim_{q \to \infty} \sup_{t \in I_{\epsilon}^2}  \frac{\|u(\cdot,t,q)\|_t^2}{V_1(t,q)} = 0\]
        in probability. Using the estimate of $V_1(t,q)$ in Case 2 that $V_1(t,q)\gtrsim \frac{1}{q}2^{(q/2-1)(4t-2s+d)}$, it suffices to show
        \[ \lim_{q \to \infty} \sup_{t \in I_{\epsilon}^2}  q2^{-(q/2-1)(4t-2s+d)}\|u(\cdot,t,q)\|_t^2 = 0\, .\]
        To achieve this, we recall the expression of the norm term and write 
        \begin{align*}
            &q2^{-(q/2-1)(4t-2s+d)}\|u(\cdot,t,q)\|_t^2\\
            \simeq & q2^{-q(s+d)+4t-2s+d}\xi_0^2+q2^{-(q/2-1)(4t-2s+d)}\sum_{m \in B_q^d \backslash \{0\}} |m|^{2t-2s}\xi_m^2
        \end{align*}
        Clearly, the first term on the right hand side converges to $0$, so we only need to deal with the second term. Let
        \[\beta(t,q)=q2^{-(q/2-1)(4t-2s+d)}\sum_{m \in B_q^d \backslash \{0\}} |m|^{2t-2s}\xi_m^2\, . \]
        Consider $t \in [s-d/2+\epsilon',1/\delta]$ where $\epsilon'$ is a parameter to be tuned. We have $2t-2s +d\geq\epsilon'>0$ so we are able to write
        \begin{align*}
            \beta(t,q)&=q2^{-(q/2-1)(4t-2s+d)}2^{q(2t-2s+d)}\alpha(2t-2s+d,q)\\
            &=q2^{-q(s-d/2)+4t-2s+d}\alpha(2t-2s+d,q)\, .
        \end{align*}
        By Lemma \ref{lemma: uniform convergence of series}, $\lim_{q \to \infty} \alpha(2t-2s+d,q) = \gamma(2t-2s+d)$ in probability uniformly for $t \in [s-d/2+\epsilon',1/\delta]$. Since $\lim_{q \to \infty}q2^{-(q/2-1)(4t-2s+d)}2^{q(2t-2s+d)}=0 $, we get $\lim_{q \to \infty} \sup_{t \in [s-d/2+\epsilon',1/\delta]} \beta(t,q) = 0$. \\
        For $t \in [\frac{s-d/2}{2}+\epsilon,s-d/2+\epsilon']$, we have the estimate 
        \[q2^{-(q/2-1)(4t-2s+d)} \leq \left(q2^{-(q/2-1)(4t-2s+d)}\right)_{t=\frac{s-d/2}{2}+\epsilon} = q2^{-2q\epsilon+4\epsilon} \]
        and 
        \[\sum_{m \in B_q^d \backslash \{0\}} |m|^{2t-2s}\xi_m^2 \leq  \sum_{m \in B_q^d \backslash \{0\}} |m|^{-d+2\epsilon'}\xi_m^2 \]
        where we have used the fact that $t$ is upper bounded by $s-d/2+\epsilon'$. Hence, 
        \begin{align*}
            \sup_{t \in [\frac{s-d/2}{2}+\epsilon,s-d/2+\epsilon']}\beta(t,q)&\leq q2^{-2q\epsilon+4\epsilon}\sum_{m \in B_q^d \backslash \{0\}} |m|^{-d+2\epsilon'}\xi_m^2\\
            &=q2^{-2q\epsilon+4\epsilon}2^{2q\epsilon'}\alpha(2\epsilon',q)\, .
        \end{align*}
    Now, we set $\epsilon'=\epsilon/2$ such that $\lim_{q\to \infty} q2^{-2q\epsilon+4\epsilon}2^{2q\epsilon'} = 0$. Lemma \ref{lemma: uniform convergence of series} leads to $\lim_{q \to \infty} \alpha(2\epsilon',q)=\gamma(2\epsilon')<\infty$, from which we can conclude $\lim_{q \to \infty} \sup_{t \in I_{\epsilon}^2} \beta(t,q) = 0$. Therefore, we get
    \begin{equation}
    \label{eqn: KF proof bound on r(t,q) 2}
    \lim_{q \to \infty} \bP[\inf_{t \in I^2_{\epsilon}} r(t,q)\leq 1] = 0\, .   
    \end{equation}
    Combining \eqref{eqn: KF proof bound on r(t,q) 1} and \eqref{eqn: KF proof bound on r(t,q) 2} gives
    \begin{equation}
    \lim_{q \to \infty} \bP[\inf_{t \in I_{\epsilon}} r(t,q)\leq 1] = 0\, .   
    \end{equation}
    Based on the definition of $r(t,q)$ in \eqref{eqn: def of r(t,q)} and the arguments therein, we obtain
    \[\lim_{q \to \infty} \bP[s^{\mathrm{KF}}(q) \in (\frac{s-d/2}{2}-\epsilon,\frac{s-d/2}{2}+\epsilon)] = 1\, , \]
    from which the consistency of the KF estimator follows.
    \end{proof}



\end{document}